\newtheorem{thm}{Theorem}[section]
\newtheorem{cor}[thm]{Corollary}
\newtheorem{prop}[thm]{Proposition}
\newtheorem{lem}[thm]{Lemma}
\newtheorem{claim}[thm]{Claim}
\newtheorem{prob}[thm]{Problem}
\newtheorem{conj}[thm]{Conjecture}
\theoremstyle{definition}
\newtheorem{rem}[thm]{Remark}
\newtheorem{obs}[thm]{Observation}
\newtheoremstyle{TheoremNum}
    {\topsep}{\topsep}
    {\itshape}
    {}
    {\bfseries}
    {.}
    { }
    {\thmname{#1}\thmnote{ \bfseries #3}}
\theoremstyle{TheoremNum}
\newtheorem{thmn}{Theorem}
\newtheorem{propn}{Proposition}
\newtheorem{corn}{Corollary}
\setlist[enumerate]{itemsep=2ex, topsep=2ex} 
\setlist[itemize]{itemsep=2ex, topsep=2ex}
\newcommand{\Z}{\mathbb{Z}}
\newcommand{\half}{\frac{1}{2}}
\newcommand{\sm}{\setminus}
\newcommand{\sub}{\subseteq}
\newcommand{\Del}{\Delta}
\newcommand{\del}{\delta}
\newcommand{\Om}{\Omega}
\newcommand{\ol}[1]{\overline{#1}}
\newcommand{\tr}[1]{\textrm{#1}}
\newcommand{\f}[2]{\frac{#1}{#2}}
\newcommand{\floor}[1]{\left\lfloor #1\right\rfloor}
\newcommand{\pr}[1]{\text{Pr}\left[#1\right]}
\newcommand{\thresh}[1]{p(#1)}
\DeclareMathOperator{\zfs}{ZFS}
\newcommand{\Bzf}[2]{B_{#1}(#2) \in \zfs(#2)} 
\title{Zero Forcing with Random Sets}
\author{Bryan Curtis\footnote{Department of Mathematics, Iowa State University {\tt bcurtis1@iastate.edu}. This work is partially supported by NSF grant 1839918.}
\and Luyining Gan\footnote{Department of Mathematics and Statistics, University of Nevada Reno {\tt lgan@unr.edu}. This work is supported by AMS-Simons Travel Grant 2022-2024.}
\and Jamie Haddock\footnote{Department of Mathematics, Harvey Mudd College {\tt jhaddock@g.hmc.edu}. This work is supported by NSF DMS award \#2211318.}
\and Rachel Lawrence\footnote{Department of Electrical Engineering and Computer Science, University of California, Berkeley {\tt rlaw@berkeley.edu}.}
\and Sam Spiro\footnote{Department of Mathematics, UCSD {\tt sspiro@ucsd.edu}. This material is based upon work supported by the National Science Foundation Graduate Research Fellowship under Grant No. DGE-1650112.}}
\date{}
\begin{document}
\maketitle
\begin{abstract}
Given a graph $G$ and a real number $0\le p\le 1$, we define the random set $B_p(G)\sub V(G)$ by including each vertex independently and with probability $p$.  We investigate the probability that the random set $B_p(G)$ is a zero forcing set of $G$.
In particular, we prove that for large $n$, this probability for trees is upper bounded by the corresponding probability for a path graph.
Given a minimum degree condition, we also prove a conjecture of Boyer et.\ al.\ regarding the number of zero forcing sets of a given size that a graph can have.
\end{abstract}

\noindent{\bf Keywords:}
 Random set zero forcing, Zero forcing polynomial, Random graphs, Graph threshold.



%
%
\section{Introduction}
Let $G$ be a graph with vertex set $V(G)$ initially colored either blue or white. If $u$ is a blue vertex of $G$ and the neighborhood $N_G(u)$ of $u$ contains exactly one white vertex $v$, then we may change the color of $v$ to blue. This iterated procedure of coloring a graph is called zero forcing. A \emph{zero forcing set} $B$ is a subset of vertices of $G$ such that if $G$ initially has all of the vertices of $B$ colored blue, then the zero forcing process can eventually color all of $V(G)$ blue.  We let $\zfs(G)$ denote the set of all zero forcing sets of $G$. The \emph{zero forcing number} $Z(G)$ is the minimum cardinality of a zero forcing set in $G$; that is, $Z(G) = \min_{B \in \zfs(G)} |B|$. The zero forcing process was first introduced by Burgarth and Giovannetti~\cite{BG07}, and the zero forcing number was introduced by an AIM's research group~\cite{AIM08} as a bound for the maximum nullity of a graph $G$.

\begin{figure}[h!]
    \centering
    \begin{subfigure}[b]{0.3\textwidth}
         \centering
         \includegraphics[scale=0.5]{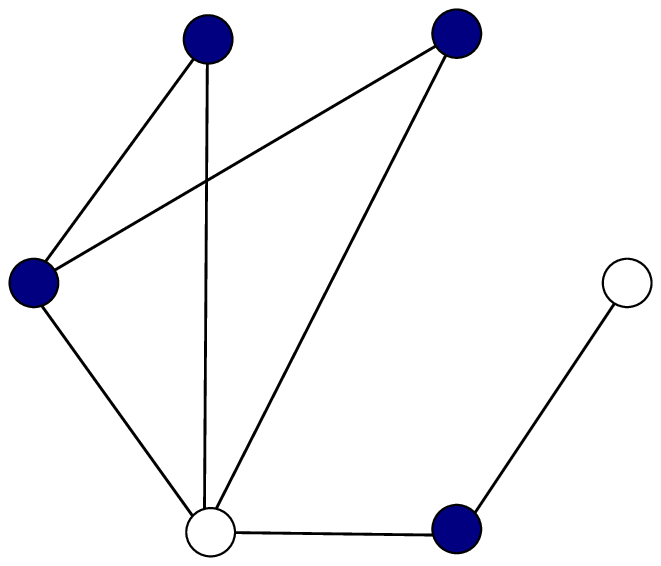}
         \caption{}
         \label{fig:zf_example1}
     \end{subfigure}
     \hfill
     \begin{subfigure}[b]{0.3\textwidth}
         \centering
         \includegraphics[scale=0.5]{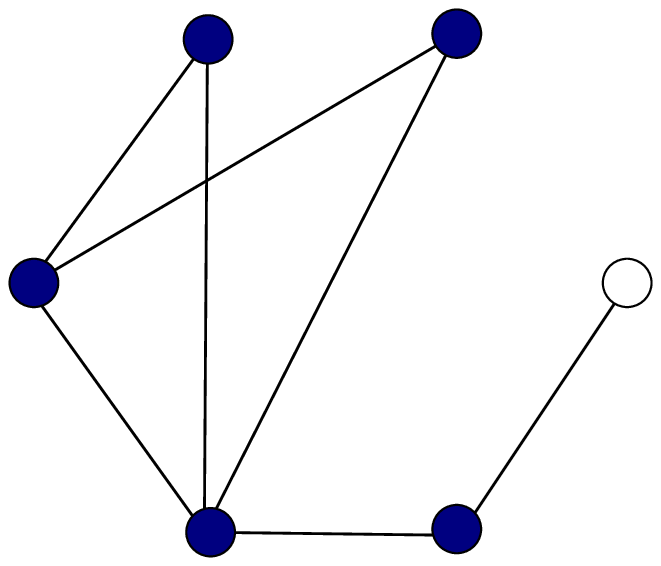}
         \caption{}
         \label{fig:zf_example2}
     \end{subfigure}
     \hfill
     \begin{subfigure}[b]{0.3\textwidth}
         \centering
         \includegraphics[scale=0.5]{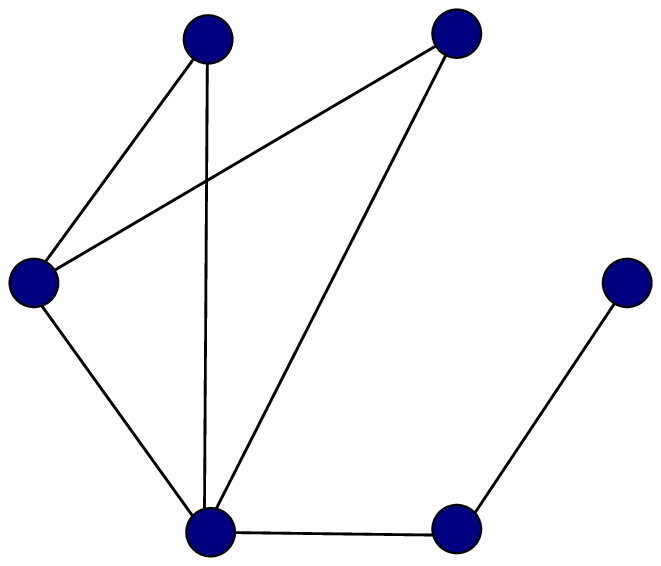}
         \caption{}
         \label{fig:zf_example3}
     \end{subfigure}
    \caption{An example of zero forcing on a graph.}
    \label{fig:zf_example}
\end{figure}

In this paper we consider a randomized version of the zero forcing process. There are seemingly two natural ways to define such a random process: one can either use a deterministic set of blue vertices $B$ together with forces that occur randomly, or one can use a random set of vertices together with deterministic forces. The process known as \textit{probabilistic zero forcing}, which was introduced by Kang and Yi~\cite{KY13}, is of the former type and is by now well studied, see for example~\cite{CCG20, GH18, NS21,EMP21, HS20}.  In this paper we introduce a process of the latter type which we call \textit{random set zero forcing}.

\subsection{Main Results}
Given a graph $G$ and real number $0\le p\le 1$, we define the random set $B_p(G)\sub V(G)$ by including each vertex of $G$ independently and with probability $p$.  For example, $B_1(G)=V(G)$, $B_0(G)=\emptyset$, and $B_{1/2}(G)$ is equally likely to be any subset of $V(G)$.

The central problem we wish to ask is, given $G$ and $p$, what is (approximately) the probability that $B_p(G)$ is a zero forcing set of $G$?  For example, one general bound we can prove is the following.
\begin{thm}\label{thm:degree}
Let $G$ be an $n$-vertex graph with minimum degree at least $\del\ge 1$.  For all $p$, we have
\[\Pr[\Bzf{p}{G}]\le \del n p^\del.\]
\end{thm}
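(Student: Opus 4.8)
The plan is to extract a simple necessary condition from the \emph{first} forcing step. Suppose $B=B_p(G)$ is a zero forcing set with $B\neq V(G)$. Since the white set $V(G)\sm B$ is nonempty and eventually gets colored, the process must begin with some force: there is a vertex $u\in B$ and a neighbor $w\in N_G(u)$ such that $w\notin B$ while every other neighbor of $u$ is already blue, i.e. $N_G(u)\sm\{w\}\sub B$. Call this event $E$. The only successful outcome not captured by $E$ is $B=V(G)$, so
\[\Pr[\Bzf{p}{G}]\le \Pr[E]+\Pr[B_p(G)=V(G)]=\Pr[E]+p^n.\]

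To bound $\Pr[E]$ I would union bound over ordered pairs $(u,w)$ with $w\in N_G(u)$. The three vertex sets $\{u\}$, $\{w\}$, and $N_G(u)\sm\{w\}$ are pairwise disjoint, so the membership events are independent and the probability that a fixed $(u,w)$ witnesses $E$ is exactly $p\cdot(1-p)\cdot p^{\deg(u)-1}$. Summing over the $\deg(u)$ choices of $w$ and then over $u$ gives
\[\Pr[E]\le (1-p)\sum_{u\in V(G)}\deg(u)\,p^{\deg(u)}.\]

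The heart of the matter is controlling $\sum_u\deg(u)p^{\deg(u)}$, and here lies the main obstacle: the map $d\mapsto dp^d$ is \emph{not} monotone in $d$, so a naive term-by-term comparison against $\del p^\del$ fails for large $p$. The key lemma is that for integers $d\ge\del\ge1$ and $0\le p\le\del/(\del+1)$ one has $dp^d\le\del p^\del$. This holds because the ratio $\frac{dp^d}{\del p^\del}=\frac{d}{\del}p^{d-\del}$ equals $1$ at $d=\del$ and, when $d$ increases by $1$, is multiplied by $\frac{d+1}{d}p\le\frac{\del+1}{\del}p\le1$, so it never exceeds $1$. Since $G$ has minimum degree at least $\del$, this yields $\sum_u\deg(u)p^{\deg(u)}\le \del n p^\del$ whenever $p\le\del/(\del+1)$.

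Finally I would split into two regimes, using that $n\ge\del+1$ (a vertex together with its $\ge\del$ neighbors). If $\del n p^\del\ge1$ the claimed bound is trivial. Otherwise $\del n p^\del<1$, and combined with $n\ge\del+1$ this forces $p<\del/(\del+1)$, placing us in the range where the lemma applies. In that range, again invoking $n\ge\del+1$ to absorb the lower-order term through $p^n\le \del n\,p^{\del+1}$, I obtain
\[\Pr[\Bzf{p}{G}]\le (1-p)\,\del n p^\del+p^n\le \del n p^\del,\]
which is the desired inequality. The only subtleties are the non-monotonicity lemma and bookkeeping for the $B=V(G)$ term, both resolved by the crude but sufficient estimate $n\ge\del+1$.
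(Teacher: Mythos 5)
Your proposal is correct and takes essentially the same route as the paper: a union bound over the identity of the first force yields $\Pr[\Bzf{p}{G}]\le p^n+(1-p)\sum_{v}d(v)p^{d(v)}$, and the non-monotonicity of $d\mapsto dp^d$ is handled exactly as in the paper, by showing the map is maximized at $d=\del$ once $p$ is below a threshold and disposing of larger $p$ with a trivial bound. The differences are only technical: you prove the maximization discretely via the ratio $\tfrac{d+1}{d}p\le 1$ with threshold $\del/(\del+1)$ where the paper's Observation~\ref{obs:calculus} uses calculus with the slightly smaller threshold $e^{-1/\del}\le \del/(\del+1)$; your trivial regime $\del np^\del\ge 1$ (which, via $n\ge\del+1$, forces $p<\del/(\del+1)$ otherwise) lets you avoid the paper's separate treatment of $K_2$ and its assumption $n\ge 3$; and you absorb the $p^n$ term at the end through $p^n\le\del np^{\del+1}$, whereas the paper absorbs it inside Lemma~\ref{lem:degreeLower} by sacrificing the $(1-p)$ factor at a single vertex.
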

In fact, we prove a slightly stronger version of this theorem that holds for graphs with ``few'' vertices of degree less than $\del$; see Theorem~\ref{thm:degreeLower}. Theorem~\ref{thm:degree} can be viewed as a probabilistic analog to the basic fact that $Z(G)\ge \del$ if $G$ is a graph with minimum degree $\del$.

Many other results from classical zero forcing also have probabilistic analogs for random set zero forcing.  For example, it is straightforward to show that if $G$ is an $n$-vertex graph, then $Z(G)\le Z(\ol{K_n})$ with equality if and only if $G=\ol{K_n}$.  In the random setting, it is also easy to show the analogous result that for all $n$-vertex graphs $G$ and $0\le p\le 1$, we have
\[\Pr[\Bzf{p}{G}]\ge \Pr[\Bzf{p}{\ol{K_n}}],\]
with equality holding if and only if either $p\in \{0,1\}$ or $G=\ol{K_n}$.

In the classical setting, it is well known that amongst $n$-vertex graphs, the path $P_n$ is the unique graph with the smallest zero forcing number.  We conjecture that an analog of this result holds in the random setting.
\begin{conj}\label{conj:path}
If $G$ is an $n$-vertex graph and $0\le p\le 1$, then
\[\Pr[\Bzf{p}{G}]\le \Pr[\Bzf{p}{P_n}],\]
with equality holding if and only if either $p\in \{0,1\}$ or $G=P_n$.
\end{conj}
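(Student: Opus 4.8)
The plan is to reduce the conjecture to a coefficient-wise comparison of zero forcing polynomials. Writing $z_k(G)$ for the number of zero forcing sets of $G$ of size $k$, we have
\[\Pr[\Bzf{p}{G}]=\sum_{k=0}^n z_k(G)\,p^k(1-p)^{n-k},\]
and the analogous identity holds for $P_n$. Since every weight $p^k(1-p)^{n-k}$ is nonnegative on $[0,1]$, it suffices to prove $z_k(G)\le z_k(P_n)$ for every $0\le k\le n$; this is precisely the conjecture of Boyer et.\ al.\ mentioned in the introduction. So the first step is to record this reduction and isolate what must be shown: for each $k$, the graph $G$ has at least as many \emph{non}-forcing sets of size $k$ as $P_n$ does.

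The second step is to evaluate the path side exactly. I would use the standard characterization that $B$ fails to be a zero forcing set if and only if $B$ is disjoint from some \emph{fort} (a nonempty $F\sub V(G)$ such that no vertex outside $F$ has exactly one neighbor in $F$). For $P_n$ with vertices $v_1,\dots,v_n$ in order, a short argument shows a fort must contain both endpoints and that its complement is an independent set in the interior path on $v_2,\dots,v_{n-1}$; consequently $B$ fails to force $P_n$ exactly when $B$ is an independent set contained in $\{v_2,\dots,v_{n-1}\}$. The number of such sets of size $k$ is the number of independent $k$-subsets of $P_{n-2}$, namely $\binom{n-1-k}{k}$, so that $z_k(P_n)=\binom{n}{k}-\binom{n-1-k}{k}$. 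The target inequality thus becomes the clean combinatorial statement that every $n$-vertex graph has at least $\binom{n-1-k}{k}$ non-forcing sets of size $k$.

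The third and decisive step is to supply, in an arbitrary $G$, enough non-forcing sets. The natural route is again through forts: since $B$ is non-forcing as soon as it avoids a single fort, I would try to construct an explicit injection from the interior independent sets of a long path living inside $G$ (inside a depth-first spanning tree, or a longest path) into the non-forcing sets of $G$, transferring the path count. When $G$ satisfies a minimum degree condition this structure is abundant enough to carry the argument through---exactly the regime in which Boyer et.\ al.'s conjecture is provable---and for trees one can instead induct, peeling a pendant path and comparing against the recurrence $I_m=(1-p)I_{m-1}+p(1-p)I_{m-2}$ for the probability $I_m$ that a random subset of $P_m$ is independent.

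Finally, granting the coefficient-wise bound, the equality case is soft and can be read off at $k=1$: for $p\in\{0,1\}$ both sides equal $0$ and $1$ respectively, while for $0<p<1$ it is well known that $Z(G)=1$ holds if and only if $G=P_n$, so $G\ne P_n$ forces $z_1(G)=0<2=z_1(P_n)$, and since $p(1-p)^{n-1}>0$ this single strict coefficient yields strict inequality. I expect the main obstacle to be the third step in full generality: controlling the union $\bigcup_F\{B:B\cap F=\emptyset\}$ over all forts $F$ of $G$ without a degree hypothesis, since low-degree vertices impose very few forcing constraints and make both the injection and the attendant inclusion--exclusion delicate. This is why the conjecture is established here only for trees, and only for large $n$, rather than for all graphs.
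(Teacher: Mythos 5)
You have attempted to prove a statement that the paper itself leaves open: Conjecture~\ref{conj:path} is stated as a conjecture, and the paper only establishes partial results toward it (Theorem~\ref{thm:tree} for trees with $n$ large, Theorem~\ref{thm:pathThreshold} for thresholds up to a constant, and Proposition~\ref{prop:degree}/Corollary~\ref{cor:degree} for restricted $k$ or large minimum degree). Your proposal is likewise not a proof, and the gap is concentrated in your third step. Steps one and two contain no new content: the reduction via $\Pr[\Bzf{p}{G}]=\sum_k z(G;k)p^k(1-p)^{n-k}$ to the coefficient-wise inequality $z(G;k)\le z(P_n;k)$ is exactly the paper's observation that Conjecture~\ref{conj:path} is a \emph{weakening} of Conjecture~\ref{conj:pathCount} of Boyer et al.\ (see (\ref{zf poly coeff})), and your count $z(P_n;k)=\binom{n}{k}-\binom{n-k-1}{k}$ is (\ref{eqn:zfs(path)}). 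So the reduction replaces the target with a strictly stronger statement that is also open; that is a legitimate strategy only if you then prove the stronger statement, which you do not. Your decisive step is a sketch you yourself concede works only ``in the regime in which Boyer et al.'s conjecture is provable,'' i.e., precisely where the paper already has Proposition~\ref{prop:degree} -- and the paper obtains even those cases by a union-bound count (Lemma~\ref{lem:degreeCount} plus Observation~\ref{obs:calculus}), not by an injection.

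Concretely, the injection idea fails as stated. To get $z(G;k)\le z(P_n;k)$ you must exhibit at least $\binom{n-k-1}{k}$ non-forcing $k$-sets in $G$. A single fort $F$ of $G$ supplies only the $\binom{n-|F|}{k}$ sets disjoint from it, which beats $\binom{n-k-1}{k}$ only when $|F|\le k+1$; the hard instances are graphs close to $P_n$ (e.g., the graph $R_n$ of Proposition~\ref{prop:p(R_n)}), whose forts all have size on the order of $n$, so one is forced to control the union $\bigcup_F\{B: B\cap F=\emptyset\}$ over many overlapping forts -- the inclusion--exclusion you flag but do not carry out. Moreover, an independent set along a path embedded in $G$ (in a DFS tree or a longest path) need not be non-forcing in $G$: chords and off-path neighbors change which vertices can force, so such sets need not avoid any fort of $G$, and the proposed injection has no well-defined target. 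Finally, note that even in the tree case the paper does \emph{not} proceed coefficient-wise for all $k$: for $p=\Omega(n^{-1})$ it compares probabilities directly via a coupling on a vertex anchoring two pendant paths (Lemmas~\ref{large p: endpoints less likely than consecutive pair} and~\ref{large p: paths are easier than graphs with joined pendant paths}, Proposition~\ref{prop:tree1}), and only for $p=O(n^{-1})$ does it use a counting bound, namely $z(T;k)\le \frac{13k^4}{n^2}\binom{n}{k}$ (Lemma~\ref{lem:leavesMaxDeg}, Proposition~\ref{prop:tree2}); your suggested pendant-peeling induction with the recurrence for independent sets is not developed and is not what makes the tree case go through. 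Your equality-case observation at $k=1$ (that $z(G;1)=0<2=z(P_n;1)$ for $G\ne P_n$) is fine, but it is conditional on the coefficient-wise bound you have not established.
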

While we do not prove this conjecture in full, we provide some partial results; in particular, we prove the conjecture when restricted to trees and with $n$ sufficiently large.
\begin{thm}\label{thm:tree}
If $T$ is an $n$-vertex tree with $n$ sufficiently large, then for all $0\le p\le 1$,
\[\Pr[\Bzf{p}{T}]\le \Pr[\Bzf{p}{P_n}],\]
with equality holding if and only if either $p\in \{0,1\}$ or $T=P_n$.
\end{thm}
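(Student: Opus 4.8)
The plan is to pass to the complementary failure probability. Writing $g(G):=1-\Pr[\Bzf{p}{G}]$, the statement is equivalent to $g(T)\ge g(P_n)$, with equality (for $0<p<1$) only when $T=P_n$. Note that Theorem~\ref{thm:degree} is useless here, since a tree always has leaves. So I would first pin down $g(P_n)$ exactly. A direct analysis of forcing on a path shows that $B$ is a zero forcing set of $P_n$ if and only if $B$ contains an endpoint or two consecutive vertices; equivalently, the non-forcing sets of $P_n$ are exactly the independent sets that avoid both leaves. Hence
\[
g(P_n)=(1-p)^2\, I_{n-2}(p),
\]
where $I_m(p)$ is the probability that $B_p(P_m)$ is independent (spans no edge). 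Since $I_m(p)$ satisfies the recurrence $I_m=(1-p)I_{m-1}+p(1-p)I_{m-2}$, this yields sharp asymptotics $g(P_n)=\Theta\big(\lambda(p)^n\big)$ for the explicit dominant root $\lambda(p)\in(0,1)$; the qualitative fact I will use is that $g(P_n)$ decays exponentially at a computable rate.

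The engine for the lower bound is that $B$ fails to be a zero forcing set whenever its complement $\ol{B}$ contains a \emph{fort} (a nonempty white set $F$ in which every outside vertex has $0$ or $\ge 2$ neighbors in $F$), since then no force ever occurs inside $F$. Thus $g(T)\ge \Pr[\ol{B_p}\supseteq F]=(1-p)^{|F|}$ for any fort $F$, and more usefully $g(T)\ge \Pr\big[\bigcup_F\{F\cap B_p=\emptyset\}\big]$ over any chosen fort family. Two families drive the argument. First, every independent set of $T$ containing no leaf is non-forcing (each blue vertex then has $\ge 2$ white neighbors); this recovers the path bound and governs trees whose branches are few and long. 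Second, a non-path tree has a branch vertex, and short pendant structures there create short forts: two leaves sharing a neighbor form a fort of size $2$, and more generally bounded pendant configurations at a branch vertex give bounded-size forts. These yield $g(T)\ge (1-p)^{O(1)}$, which dwarfs $g(P_n)=\Theta(\lambda^n)$.

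I would then organize the proof by how ``bushy'' $T$ is. If $T$ has a branch vertex sitting within bounded distance of two leaves (hence a bounded-size fort), the second family gives $g(T)\ge (1-p)^{O(1)}\gg g(P_n)$ for $n$ large, settling these cases for all $p$ simultaneously. The delicate case is when $T$ is \emph{path-like}: a long spine carrying only a few short branches, or a subdivided spider with boundedly many long legs. Here $g(T)$ and $g(P_n)$ are both of order $\lambda(p)^n$ and differ only through the local effect of the branches, so I would estimate the union over the full fort family of $T$ by a transfer-matrix / generating-function computation along the spine (or legs), showing that each branch inserts an extra nonnegative local factor that makes the exponential rate (and leading constant) at least that of $P_n$, and strictly larger unless the branch is absent. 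The hypothesis that $n$ is large is exactly what lets this comparison of exponential rates dominate the lower-order boundary corrections.

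For the equality analysis, when $p\in\{0,1\}$ both sides are trivially $0$ or $1$. For $0<p<1$ the strictness established in the branch-contribution step forces $g(T)=g(P_n)$ to occur only when $T$ has no branch vertex, i.e.\ $T=P_n$. The main obstacle is precisely the path-like case: producing a lower bound on $g(T)$ that holds for \emph{all} $p\in(0,1)$ while being sharp enough to beat the exact value $g(P_n)=\Theta(\lambda(p)^n)$ when the gap between them is only exponentially small. This is where the careful transfer-matrix bookkeeping, and the assumption that $n$ is sufficiently large, are indispensable.
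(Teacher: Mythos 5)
There is a genuine gap, and it sits exactly where the theorem is hardest: the quantification over $p$. The statement fixes one $n_0$ that must work for \emph{all} $p\in[0,1]$, so you cannot argue with $p$ held fixed. Your central comparison ``$g(T)\ge (1-p)^{O(1)}$, which dwarfs $g(P_n)=\Theta(\lambda(p)^n)$'' is only valid when $p$ is bounded away from $0$; for $p=O(1/n)$ one has $\lambda(p)^n=1-\Theta(np^2+p)$, so $g(P_n)=1-2p+O(np^2)$, while a bounded-size fort only gives $g(T)\ge (1-p)^{K}=1-Kp+O(p^2)$, which for $K>2$ lies \emph{below} the target. Your first fort family is even worse off in this regime: for a non-path tree with $\ell\ge 3$ leaves, the leaf-avoiding independent sets give only $g(T)\ge 1-\ell p-O(np^2)$, again below $g(P_n)\approx 1-2p-O(np^2)$. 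The family is genuinely too small, because it misses non-forcing sets that \emph{do} contain leaves (e.g.\ a single leaf of a spider forces only up its own leg and stalls at the center, so it is non-forcing but is excluded from your family); the true small-$p$ advantage of $P_n$ is that a single endpoint already forces, whereas $Z(T)\ge 2$ for $T\ne P_n$, and a fort-union bound over your two families cannot see this. The paper handles exactly this regime not with forts but by counting: for $T\ne P_n$ it proves $z(T;k)\le 13k^4n^{-2}{n\choose k}$ (Lemma~\ref{lem:leavesMaxDeg}), which yields $\Pr[\Bzf{p}{T}]\le p/2\le \Pr[\Bzf{p}{P_n}]$ for all $0<p\le C/n$.

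The second gap is that your ``path-like'' case — a long spine with few short branches, where $g(T)$ and $g(P_n)$ differ only at exponentially small relative order — is resolved by an asserted transfer-matrix computation (``each branch inserts an extra nonnegative local factor''), and that assertion \emph{is} the theorem in its hardest case; no mechanism is given for why the local factor is nonnegative uniformly in $p$, and the strictness needed for the equality characterization rests entirely on it. For comparison, the paper avoids spectral bookkeeping altogether in the large-$p$ regime ($p>24/n$): it takes two leaves at minimal distance, whose pendant paths $u_1\cdots u_s w$ and $v_1\cdots v_t w$ satisfy $s+t\le 2n/3$, couples $B_p(T)=B_p(P_n)$ on a common vertex set with $P_n$ relabeled as $u_1\cdots u_s w_1\cdots w_r v_t\cdots v_1$, conditions on $B_p\cap\{u_1,\ldots,u_{s-1},v_1,\ldots,v_{t-1}\}$, and reduces to the elementary inequality that hitting $\{u_s,v_t\}$ (necessary for $T$) is less likely than hitting some consecutive pair among the remaining $\ge n/3$ vertices (sufficient for $P_n$), via Lemma~\ref{large p: endpoints less likely than consecutive pair}. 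So your outline identifies the right difficulty but does not supply the two ingredients that actually close it: a small-$p$ argument that beats the linear term $2p$ of the path, and a proof (rather than a claim) of monotonicity of the failure probability under attaching branches in the path-like case.
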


For many graphs $G$, it will happen that there exists a $p$ such that $B_{p'}(G)$ is very unlikely to be a zero forcing set if $p'$ is much smaller than $p$, and that $B_{p'}(G)$  is very likely to be zero forcing if $p'$ is much larger than $p$, see for example Figure \ref{fig:EmpiricalGraph}.  To capture what this value of $p$ is, we define the \emph{threshold probability} $\thresh{G}$ to be the unique $p$ such that $\Pr[\Bzf{p}{G}]=\half$, and it is not difficult to see that $\thresh{G}$ is well defined.  This definition is motivated by the study of thresholds in random graphs, which is one of the fundamental topics in probabilistic combinatorics (see, for example \cite{FK16}).

\begin{figure}[h!]
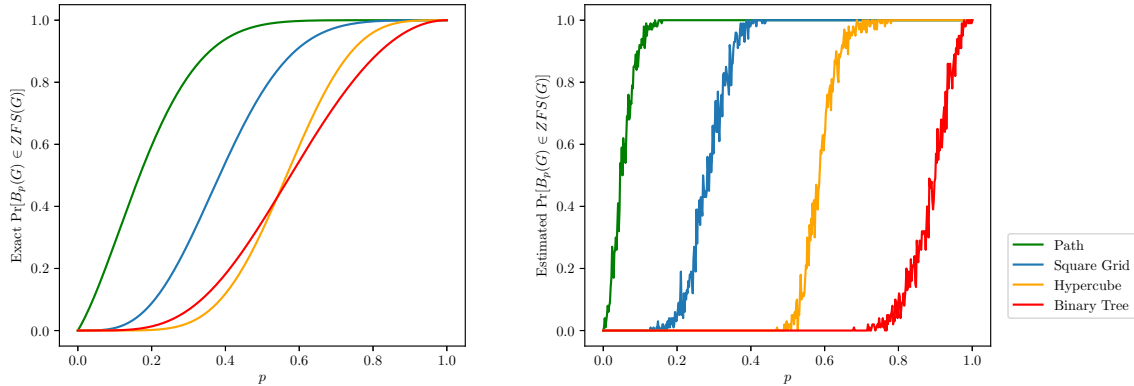

     \hspace{20pt}\resizebox{0.50\textwidth}{!}{\input{figs/empirical_4graphs_16vertices_formatting.pgf}}
     \hspace{-40pt}\resizebox{0.50\textwidth}{!}{\input{figs/empirical_4graphs_256vertices_formatting3.pgf}}
    \caption{Exact (left) and Monte Carlo estimates (right) of $\Pr[\Bzf{p}{G}]$ for the path, square grid, hypercube, and left-complete binary tree graphs on 16 and 256 vertices respectively.}
    \label{fig:EmpiricalGraph}
\end{figure}

We note that if Conjecture~\ref{conj:path} were true, then in particular we would have $\thresh{G}>\thresh{P_n}$ for all $n$-vertex graphs $G\ne P_n$; we subsequently prove that this is true up to a constant factor. Here and throughout the paper we use standard asymptotic notation, which is recalled in Subsection~\ref{Sec:notation}.
\begin{thm}\label{thm:pathThreshold}
If $G$ is an $n$-vertex graph, then
\[\thresh{G}=\Om(\thresh{P_n})=\Om(n^{-1/2}).\]
\end{thm}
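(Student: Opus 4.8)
The plan is to prove the equivalent statement that there is an absolute constant $c>0$ with $\Pr[\Bzf{p}{G}]\le\half$ whenever $p\le cn^{-1/2}$; since $\Pr[\Bzf{p}{G}]$ is non-decreasing in $p$ and $\thresh{G}$ is defined by $\Pr[B_{\thresh{G}}(G)\in\zfs(G)]=\half$, this yields $\thresh{G}\ge cn^{-1/2}$. To see that this also delivers the comparison with the path, I first record that $B$ is a zero forcing set of $P_n$ if and only if $B$ is \emph{not} an independent set of $P_n$ avoiding both endpoints (an independent interior set leaves every blue vertex with two white neighbours, and one checks this is the only obstruction). Hence $\Pr[B_p(P_n)\notin\zfs(P_n)]=\Pr[\,B_p(P_n)\text{ independent, both endpoints white}\,]$, which is $\Theta(1)$ exactly when $(n-1)p^2=\Theta(1)$; thus $\thresh{P_n}=\Theta(n^{-1/2})$, so $\Om(\thresh{P_n})=\Om(n^{-1/2})$ and it suffices to establish the $n^{-1/2}$ bound.

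The main tool is the observation behind Theorem~\ref{thm:degree}: if $B\ne V(G)$ is a zero forcing set then the first force requires some $u\in B$ with exactly one white (non-$B$) neighbour, so
\[\Pr[\Bzf{p}{G}]\le p^n+\sum_{u\in V(G)}\deg(u)(1-p)\,p^{\deg(u)}.\]
Splitting the sum by degree, vertices of degree $0$ contribute nothing, each vertex of degree $1$ contributes at most $p$, and for $p\le\half$ one has $\deg(u)p^{\deg(u)}\le 2p^2$ whenever $\deg(u)\ge 2$. Writing $L$ for the number of degree-$1$ vertices of $G$, this gives $\Pr[\Bzf{p}{G}]\le p^n+Lp+2np^2$. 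Theorem~\ref{thm:degreeLower} is precisely the refinement that isolates this low-degree contribution $Lp$.

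I would then dispatch degenerate structure directly: an isolated vertex can never be coloured unless it lies in $B$, and two leaves at a common vertex (or a $K_2$ component) deadlock, so in each of these cases $\Pr[\Bzf{p}{G}]\le 2p$ (equivalently $\Pr[B_p(G)\notin\zfs(G)]\ge(1-p)^2\ge\half$ for $p$ below an absolute constant), giving $\thresh{G}=\Om(1)$. Hence I may assume $G$ has no isolated vertex, no $K_2$ component, and at most one leaf per vertex, so the $L$ leaves have distinct neighbours. If $L\le\sqrt n$, then at $p=cn^{-1/2}$ the bound above reads $\Pr[\Bzf{p}{G}]\le o(1)+c+2c^2\le\half$ for $c$ a small constant, and we are done. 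If instead $L>\sqrt n$, the graph is far from a path and its threshold is much larger than $n^{-1/2}$: a white leaf $\ell_j$ can only ever be coloured by its neighbour $w_j$, and only once every other neighbour of $w_j$ is already blue, so for $p=cn^{-1/2}\to 0$ each individual leaf fails to be colourable with probability bounded away from $0$; since there are more than $\sqrt n$ of them, one expects many simultaneously uncolourable leaves, whence $\Pr[\Bzf{p}{G}]\le\half$.

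The main obstacle is exactly this many-leaf regime. The first-force bound is useless there because the term $Lp$ blows up --- many leaves make forcing easy to \emph{start} --- whereas the real phenomenon is that many leaves make forcing hard to \emph{complete}. Turning the heuristic ``more than $\sqrt n$ near-independent uncolourability events'' into a proof is the delicate point, since the neighbours $w_j$ may have large degree and their colourability conditions can overlap; one must either extract $\Om(\sqrt n)$ vertex-disjoint uncolourable gadgets around a positive fraction of the leaves, or run a second-moment/inclusion--exclusion estimate over an appropriate family of forts, in the spirit of the path computation underlying Theorem~\ref{thm:tree}. This is where the strengthened degree bound of Theorem~\ref{thm:degreeLower} together with a fort analysis carries the argument.
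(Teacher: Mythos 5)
Your argument is complete and correct in the regime where $G$ has at most $\sqrt n$ leaves: the first-force bound $\Pr[\Bzf{p}{G}]\le p^n+Lp+2np^2$ (with $L$ the number of degree-one vertices), together with your dispatch of isolated vertices and of two leaves sharing a neighbour, does give $\Pr[\Bzf{p}{G}]\le \half$ at $p=cn^{-1/2}$ for a small absolute $c$. But the case $L>\sqrt n$ is a genuine gap, and you say so yourself: what you offer there is a heuristic, not a proof. Moreover, the route you sketch is problematic as stated. The event ``leaf $\ell_j$ gets coloured'' is a global, \emph{increasing} event, and by Harris/FKG increasing events are positively correlated, so $\Pr[\text{all leaves coloured}]\ge\prod_j\Pr[\ell_j\text{ coloured}]$ --- the inequality points the wrong way for the product upper bound you want. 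Extracting $\Om(\sqrt n)$ disjoint uncolourable gadgets or running a second-moment argument over forts would require real work (the anchors $w_j$ can have large degree, and forcing can arrive at $w_j$ from anywhere in the graph), and Theorem~\ref{thm:degreeLower} does not help here since its $4pN$ term is exactly the $Lp$ term that blows up. This is quite different from the paper's proof, which never counts leaves: it first shows (Lemma~\ref{w pend paths}) that a vertex anchoring two pendant paths already forces $\thresh{G}=\Om(n^{-1/2})$, then neutralizes the few long pendant paths by adding a clique on their pendant vertices (Lemma~\ref{add a clique new}, costing $pM\le .01$), projects the zero forcing property onto the $2$-core (Lemma~\ref{without pendants}), absorbs each short pendant path into an inflated inclusion probability $q=2p+100n^{1/2}p^2$ for its anchor, and finally applies Theorem~\ref{thm:degree} with $\del=2$ to the $2$-core.

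Ironically, your missing case has a short self-contained fix that would make your route simpler than the paper's, but it is not the one you propose. In any chronological set of forces, the maximal forcing chains are paths that partition $V(G)$ and are in bijection with the initial blue set $B$; a degree-one vertex cannot be internal to such a path, so every white leaf terminates its own chain, giving $|B|\ge L-|B|$, i.e.\ every zero forcing set has size at least $L/2$ (this is the path-cover bound the paper itself invokes in the proof of Lemma~\ref{lem:leavesMaxDeg}). Hence for $L>\sqrt n$ and $p=cn^{-1/2}$, Markov's inequality gives $\Pr[\Bzf{p}{G}]\le\Pr[|B_p(G)|\ge L/2]\le 2np/L\le 2c\le\half$ for $c\le 1/4$. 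With that substitution your two-case argument closes; as submitted, however, the main case is asserted rather than proved.
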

In essence this result says that a random set of much less than $n^{1/2}$ vertices of any $n$-vertex graph $G$ is very unlikely to be a zero forcing set for sufficiently large $n$.

Conjecture~\ref{conj:path} can be viewed as a weakened version of a conjecture involving the number of zero forcing sets of a given size.  To this end, we observe that if $G$ is an $n$ vertex graph and $z(G;k)$ is the number of zero forcing sets of $G$ of size $k$, then
\begin{equation}\label{zf poly coeff}
\Pr[\Bzf{p}{G}]=\sum_{k = 1}^{n} z(G;k)p^k(1-p)^{n-k}.
\end{equation}
The notation $z(G;k)$ follows that of Boyer et.\ al.\ in \cite{boyer2019zero} who introduced the study of zero forcing polynomials and found many explicit formulas for $z(G;k)$, including:
\begin{equation}\label{eqn:zfs(path)}
z(P_n;k) = \binom{n}{k} - \binom{n-k-1}{k}.
\end{equation}
Observe that, by (\ref{zf poly coeff}), Conjecture~\ref{conj:path} is a weakened version of the following conjecture.
\begin{conj}[\cite{boyer2019zero}]\label{conj:pathCount}
If $G$ is an $n$-vertex graph, then for all $k$, \[z(G;k)\le z(P_n;k)={n\choose k}-{n-k-1\choose k}.\]
\end{conj}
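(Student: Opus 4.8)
The plan is to pass to complements and count \emph{non}-forcing sets. Writing $\bar z(G;k)=\binom nk-z(G;k)$ for the number of $k$-subsets of $V(G)$ that are not zero forcing sets, the claim $z(G;k)\le z(P_n;k)$ is equivalent to the lower bound $\bar z(G;k)\ge\binom{n-k-1}{k}$, since $z(P_n;k)=\binom nk-\binom{n-k-1}{k}$ by \eqref{eqn:zfs(path)}. To produce non-forcing sets I would invoke the standard characterization via \emph{forts}: a nonempty set $F\sub V(G)$ is a fort if every vertex outside $F$ has either $0$ or at least $2$ neighbors in $F$, and a set $B$ fails to be a zero forcing set if and only if $B$ is disjoint from some fort. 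Consequently, for any fort $F$ every $k$-subset of $V(G)\sm F$ is non-forcing, so $\bar z(G;k)\ge\binom{n-|F|}{k}$.

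The immediate consequence is that the conjecture holds for a given $k$ as soon as $G$ has a fort of size at most $k+1$: such an $F$ yields $\bar z(G;k)\ge\binom{n-|F|}{k}\ge\binom{n-k-1}{k}$, because $\binom mk$ is nondecreasing in $m$. In particular, if $\mathrm{ft}(G)$ denotes the minimum size of a fort, the desired inequality is automatic for every $k\ge \mathrm{ft}(G)-1$. Thus a single smallest fort disposes of the large-$k$ regime, and it remains only to treat $k<\mathrm{ft}(G)-1$.

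For the small-$k$ regime I would lean on the minimum degree hypothesis through Theorem~\ref{thm:degree}. Since $Z(G)\ge\del$, we have $z(G;k)=0$ for every $k<\del$, which settles the smallest sizes for free. For the remaining small sizes, note that all summands of $\Pr[\Bzf pG]=\sum_k z(G;k)p^k(1-p)^{n-k}$ are nonnegative, so for each fixed $k$ and each $p\in(0,1)$ Theorem~\ref{thm:degree} gives
\[
z(G;k)\le \frac{\del n\,p^{\del}}{p^{k}(1-p)^{n-k}}.
\]
Optimizing the right-hand side at $p=(k-\del)/(n-\del)$ produces a bound of order $\del n\,(k/n)^{\del}\binom nk$ up to polynomial factors, which is smaller than $z(P_n;k)\sim\binom nk$ once $k/n$ is small enough relative to $\del$. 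Calibrating the degree condition so that this estimate stays valid up to $k=\mathrm{ft}(G)-1$ would then meet the fort argument and close the range.

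The main obstacle is precisely the intermediate range of $k$, where a single fort is larger than $k+1$ and the crude degree bound is no longer strong enough. This is exactly the regime in which the path is extremal: $P_n$ has minimum degree $1$ and smallest fort of size about $n/2$, yet it attains equality $\bar z(P_n;k)=\binom{n-k-1}{k}$ because its non-forcing sets (the independent subsets of the interior) come from \emph{many} different forts rather than from one. Matching this count for a general $G$ forces one either to show that the degree hypothesis bounds $\mathrm{ft}(G)$ by $\del+1$—which is false without such a hypothesis, as $P_n$ shows, so the hypothesis must enter essentially here—or to aggregate the contributions $\binom{n-|F_i|}{k}$ over a family of forts by inclusion–exclusion while controlling the overlaps $|F_i\cup F_j|$. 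Obtaining such control, equivalently a sharp upper bound on $\mathrm{ft}(G)$ under the degree condition, is the heart of the matter; an appealing alternative would be a compression argument deforming $G$ toward $P_n$ while monotonically increasing each coefficient $z(\,\cdot\,;k)$, though verifying coefficientwise monotonicity under local edge moves looks delicate.
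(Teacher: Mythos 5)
First, a point of order: the statement you set out to prove is presented in the paper as Conjecture~\ref{conj:pathCount} and remains \emph{open} there; the paper proves only special cases, namely Proposition~\ref{prop:degree} and Corollary~\ref{cor:degree} (plus citing \cite{boyer2019zero} for graphs with a Hamiltonian path). Your proposal is likewise not a proof, as you yourself concede, and the gap is genuine: your two tools are calibrated to hypotheses the conjecture does not grant. The small-$k$ estimate needs a minimum degree hypothesis (effectively $\del\ge 3$) for $\del np^{\del}$ to have any force, whereas the conjecture quantifies over all graphs, including those with $\del\le 2$; and your large-$k$ fort bound $\bar z(G;k)\ge\binom{n-|F|}{k}$ only engages once $k\ge\mathrm{ft}(G)-1$, while $\mathrm{ft}(G)$ is not controlled by minimum degree in either direction --- for example $C_n$ has minimum degree $2$ but smallest fort of size $\lceil n/2\rceil$, so for the cycle your fort step gives nothing below $k\approx n/2$, where the inequality is already trivial because $\binom{n-k-1}{k}=0$ for $k\ge n/2$ and hence $z(P_n;k)=\binom{n}{k}$. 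In other words, the fort machinery buys nothing beyond the trivial $k\ge n/2$ observation unless one proves a nontrivial upper bound on $\mathrm{ft}(G)$, and that is precisely the open content you defer to the final paragraph.

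Second, the parts of your plan that do work essentially reproduce the paper's partial results, but with avoidable losses. The paper's route to Proposition~\ref{prop:degree} is a direct count (Lemma~\ref{lem:degreeCount}): every forcing set contains some vertex $v$ together with $d(v)-1$ of its neighbors, whence $z(G;k)\le\sum_v d(v)\binom{n-d(v)}{k-d(v)}\le \del n(k/n)^{\del}\binom{n}{k}$ via Observation~\ref{obs:calculus}. Your detour through $\Pr[\Bzf{p}{G}]\le\del np^{\del}$ and optimizing over $p$ yields the same shape of bound but loses a factor of order $\sqrt{k}$ when converting $p^{-k}(1-p)^{-(n-k)}$ at $p\approx k/n$ into $\binom{n}{k}$. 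More importantly, your benchmark is wrong in the regime that matters: $z(P_n;k)\sim\binom{n}{k}$ fails for $k=o(\sqrt n)$, where by (\ref{eqn:zfs(path)}) one has $z(P_n;k)=\Theta\bigl((k^2/n)\binom{n}{k}\bigr)$; this is the content of the paper's Lemma~\ref{lem:pathCount}, and the comparison must be run against $\frac{k^2}{n+k^2}\binom{n}{k}$. Doing so is what produces the threshold $k\le(2\del)^{-1/\del}n^{1-1/\del}$ in Proposition~\ref{prop:degree}, and once $\del\ge\log_2(n)+2\log_2\log_2(n)$ pushes that threshold past $n/2$, the trivial large-$k$ regime takes over and gives Corollary~\ref{cor:degree}. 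With those corrections your two end regimes match what the paper actually establishes; the intermediate range is, as you say, the heart of the matter, and it remains open both in your proposal and in the paper.
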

It was shown in \cite{boyer2019zero} that Conjecture \ref{conj:pathCount} holds whenever $G$ contains a Hamiltonian path, but other than this very little is known. By extending our proof of Theorem~\ref{thm:degree}, we prove Conjecture~\ref{conj:pathCount} whenever $k$ is sufficiently small, as a function of the minimum degree of $G$.

\begin{prop}\label{prop:degree}
If $G$ is an $n$-vertex graph with minimum degree $\del\ge 3$, then for all $k\le (2\del)^{-1/\del}n^{1-1/\del}$ we have $z(G;k)\le z(P_n;k)$.
\end{prop}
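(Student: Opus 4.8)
The plan is to adapt the first-force argument behind Theorem~\ref{thm:degree} to the counting setting. The key observation is that if $B$ is a zero forcing set with $B\ne V(G)$, then the first force must be performed by some vertex $u\in B$ all of whose neighbors but one lie in $B$; that is, there is a vertex $u\in B$ with $|N_G(u)\sm B|=1$. Since every zero forcing set of size $k<n$ has this property, I would bound
\[z(G;k)\le\#\{B\sub V(G):|B|=k,\ \exists\, u\in B\text{ with }|N_G(u)\sm B|=1\}\le\sum_{u\in V(G)}\deg(u)\binom{n-\deg(u)-1}{k-\deg(u)}.\]
Here the final bound is obtained by choosing the forcing vertex $u$, its unique non-blue neighbor (the factor $\deg(u)$), and the remaining $k-\deg(u)$ elements of $B$ from the $n-\deg(u)-1$ vertices outside $N_G[u]$, after placing the other $\deg(u)-1$ neighbors of $u$ into $B$. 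Crucially the summand vanishes unless $\deg(u)\le k$, so only low-degree vertices contribute and the high-degree vertices that would wreck a naive union bound drop out on their own. (Sets of size $k<\del$ are not zero forcing at all, since $Z(G)\ge\del$, so we may assume $\del\le k$.)

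Writing $g(d)=d\binom{n-d-1}{k-d}$, the next step is to show $g$ is decreasing in $d$ for $\del\le d\le k$, so that every summand is at most $g(\del)$ and therefore $z(G;k)\le n\,g(\del)=\del n\binom{n-\del-1}{k-\del}$. This monotonicity is immediate from
\[\frac{g(d+1)}{g(d)}=\frac{d+1}{d}\cdot\frac{k-d}{n-d-1},\]
which is less than $1$ once $k$ is small compared to $n$, as it is in our range $k=O(n^{1-1/\del})$.

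It then remains to establish the binomial inequality $\del n\binom{n-\del-1}{k-\del}\le\binom nk-\binom{n-k-1}{k}=z(P_n;k)$. For the left side I would use
\[\frac{\del n\binom{n-\del-1}{k-\del}}{\binom nk}=\del(n-k)\,\frac{k(k-1)\cdots(k-\del+1)}{(n-1)(n-2)\cdots(n-\del)}\le\del\,\frac{k^\del}{n^{\del-1}}\,\Bigl(1-\tfrac{\del}{n}\Bigr)^{-\del},\]
noting that the hypothesis $k\le(2\del)^{-1/\del}n^{1-1/\del}$ is exactly equivalent to $\del k^\del/n^{\del-1}\le\tfrac12$. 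For the right side I would use $\binom{n-k-1}{k}\le\binom nk e^{-k(k+1)/n}$ to get $z(P_n;k)\ge\binom nk\bigl(1-e^{-k^2/n}\bigr)$. The comparison splits at $k=\sqrt n$: when $k\ge\sqrt n$ one has $1-e^{-k^2/n}\ge1-e^{-1}>\tfrac12$, and the factor $2$ in the threshold is exactly what forces the left side below $\tfrac12\binom nk\le z(P_n;k)$ (up to the correction factors $(n-k)/n$ and $(1-\del/n)^{-\del}$ discussed next); when $k<\sqrt n$ one instead uses $1-e^{-k^2/n}\ge k^2/(2n)$ against the much weaker constraint on $k$ that survives in that range.

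The step I expect to be the main obstacle is this final comparison with the stated constant. One must check that the lower-order factors $(n-k)/n<1$ and $(1-\del/n)^{-\del}>1$ combine favorably — for $k\ge\sqrt n\gg\del^2$ their product is below $1$ — so that the slack provided by the factor $2$ is genuinely enough, and one must verify that the two regimes meet cleanly near $k\approx\sqrt n$. This is precisely where the exact shape of the threshold and the hypothesis $\del\ge3$ (which guarantees $k=O(n^{1-1/\del})$ stays well below $n$, keeping all binomial ratios controllable) enter the argument.
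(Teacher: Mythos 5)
Your skeleton tracks the paper's proof closely: your first-force counting bound is (a marginally sharper form of) Lemma~\ref{lem:degreeCount}, your monotonicity of $g(d)=d\binom{n-d-1}{k-d}$ plays the role of Observation~\ref{obs:calculus}, your lower bound $z(P_n;k)\ge\bigl(1-e^{-k^2/n}\bigr)\binom{n}{k}$ is the analogue of Lemma~\ref{lem:pathCount} (which gives $\frac{k^2}{n+k^2}\binom{n}{k}$ via Bernoulli), and the split at $k=\sqrt{n}$ is exactly the paper's case analysis. The monotonicity step, though only sketched, is in fact sound on the whole range: the condition $\frac{d+1}{d}\cdot\frac{k-d}{n-d-1}<1$ at its worst case $d=\del$ reduces algebraically to $(\del+1)k<\del n$, equivalently $(2\del n)^{1/\del}>1+1/\del$, which holds because $(1+1/\del)^\del<e<2\del n$.

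The genuine gap is in the step you yourself flagged, and it is worse than a bookkeeping issue: your final bound carries the factor $(1-\del/n)^{-\del}$, and your justification that the correction factors ``combine favorably --- for $k\ge\sqrt n\gg\del^2$'' silently assumes $\del\ll n^{1/4}$, which nothing in the hypotheses provides. Only $\del\ge 3$ is assumed, and the range of admissible $k$ is nonempty for every $\del\le n-1$: for instance $\del=n/2$ permits $k$ up to roughly $n-4\ln n$, and there $(1-\del/n)^{-\del}=2^{n/2}$, so with $k=n/2$ your chain yields the vacuous estimate $z(G;k)\le\frac{1}{4}\,2^{n/2}\binom{n}{k}$, which cannot be compared against $z(P_n;k)<\binom{n}{k}$. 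The lossy move is bounding the falling factorial $(n-1)(n-2)\cdots(n-\del)$ below by $(n-\del)^\del$. The repair is exactly what the paper does: bound the binomial ratio termwise,
\[
\binom{n-t}{k-t}\Big/\binom{n}{k}=\prod_{i=0}^{t-1}\frac{k-i}{n-i}\le (k/n)^t,
\]
(and note $\binom{n-\del-1}{k-\del}\le\binom{n-\del}{k-\del}$, so your sharper count loses nothing). This gives $z(G;k)\le \del n (k/n)^\del\binom{n}{k}\le\frac{1}{2}\binom{n}{k}$ directly from the hypothesis, with no correction factor and uniformly in $\del$; the case $k\ge\sqrt n$ is then immediate since $z(P_n;k)\ge\frac{k^2}{n+k^2}\binom{n}{k}\ge\frac{1}{2}\binom{n}{k}$, and the case $k<\sqrt n$ reduces, as in both your sketch and the paper, to the mild condition $n/k\ge(2\del)^{1/(\del-2)}$. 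With that one replacement your argument closes and coincides with the paper's.
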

We additionally show that this implies Conjecture~\ref{conj:pathCount} whenever $G$ has sufficiently large minimum degree.
\begin{cor}\label{cor:degree}
If $G$ is an $n$-vertex graph with minimum degree $\del\ge \log_2(n)+2\log_2\log_2(n)$, then  $z(G;k)\le z(P_n;k)$ for all $k$.
\end{cor}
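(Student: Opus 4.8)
The plan is to prove the inequality separately for large and small values of $k$, using the explicit formula \eqref{eqn:zfs(path)} to dispatch the large values and Proposition~\ref{prop:degree} to dispatch the small ones. The whole point of the degree hypothesis is that it is exactly strong enough to make the range of $k$ covered by Proposition~\ref{prop:degree} reach up past the range that can be handled trivially, so that the two ranges together cover every $k\in\{1,\dots,n\}$.

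First I would dispatch the large values. Suppose $k>\f{n-1}{2}$. Then $n-k-1<k$, so $\binom{n-k-1}{k}=0$ and hence $z(P_n;k)=\binom{n}{k}$ by \eqref{eqn:zfs(path)}. Since $z(G;k)$ counts a subfamily of the $\binom{n}{k}$ size-$k$ subsets of $V(G)$, we trivially get $z(G;k)\le\binom{n}{k}=z(P_n;k)$. This settles every $k>\f{n-1}{2}$ for \emph{any} graph $G$, with no use of the degree hypothesis.

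It remains to treat $k\le\f{n-1}{2}$. The hypothesis forces $\del\ge 3$ once $n$ is not tiny (the finitely many tiny cases being vacuous or trivial since then $P_n=K_n$), so Proposition~\ref{prop:degree} applies and it is enough to verify that every such $k$ satisfies $k\le(2\del)^{-1/\del}n^{1-1/\del}$. Because $k\le\f{n-1}{2}\le\f{n}{2}$, it suffices to prove $(2\del)^{-1/\del}n^{1-1/\del}\ge\f{n}{2}$. Dividing by $n$, this is equivalent to $(2\del n)^{-1/\del}\ge\half$, which upon taking reciprocals and $\del$-th powers becomes the clean inequality
\[2\del n\le 2^{\del}.\]

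Finally I would verify $2\del n\le 2^{\del}$ from $\del\ge\log_2 n+2\log_2\log_2 n=:\del_0$. Setting $h(\del)=2^{\del}/(2\del n)$, one computes $(\log h)'=\ln 2-1/\del>0$ for $\del\ge 2$, so $h$ is increasing and it suffices to check $h\ge 1$ at the smallest admissible $\del$. Substituting $\del_0$ gives $2^{\del_0}=n(\log_2 n)^2$, so with $L:=\log_2 n$ the bound $2\del_0 n\le 2^{\del_0}$ reduces to $2L+4\log_2 L\le L^2$; since $f(L)=L^2-2L-4\log_2 L$ satisfies $f(4)=0$ and is increasing for $L\ge 4$, this holds for all $n\ge 16$. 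The only genuine work beyond invoking Proposition~\ref{prop:degree} is this elementary threshold estimate, and the sole nuisance is the small-$n$ regime: there $h(\del_0)\ge1$ can fail, but since $\del$ is an integer one instead uses $\del\ge\lceil\del_0\rceil$, which for $n<16$ forces $G$ to be nearly complete, so $z(G;k)$ is nonzero only for $k$ very close to $n$ and the desired inequality is immediate.
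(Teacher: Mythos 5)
Your proposal is correct and takes essentially the same route as the paper: the trivial bound $z(G;k)\le\binom{n}{k}=z(P_n;k)$ for $k$ past the halfway point, Proposition~\ref{prop:degree} for the remaining $k$, the reduction of the needed range condition to $2\delta n\le 2^{\delta}$, and verification of that inequality at $\delta_0=\log_2 n+2\log_2\log_2 n$ by monotonicity of $2^{\delta}/(2\delta n)$. Two remarks on the boundary: your threshold $n\ge 16$ is in fact the correct one (the paper's claimed chain for $n\ge 9$ fails for $9\le n\le 15$, where $(\log_2 n)^2<2\log_2 n+4\log_2\log_2 n$, and is rescued only by the integrality argument you invoke), but your phrase ``forces $G$ to be nearly complete'' should be made precise --- for $9\le n\le 15$ the hypothesis only forces $\delta\ge 7$ or $8$, not near-completeness; what actually closes these cases is that $\lceil\delta_0\rceil>\frac{n-1}{2}$ there, so $Z(G)\ge\delta$ gives $z(G;k)=0$ for every $k$ not already covered by the trivial large-$k$ bound.
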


\subsection{Organization and Notation}\label{Sec:notation}
This paper is organized as follows. In Section~\ref{Sec:examples}, we provide specific bounds on the threshold probability for several families of graphs and some graph operations. In Section~\ref{Sec:degree}, we provide a general bound on the probability that $B_p(G)$ is zero forcing given the minimum degree.
In Section~\ref{Sec:path}, we prove that the threshold probability for an $n$-vertex graph $G$ is $\Om(n^{-1/2})$. In Section~\ref{Sec:tree}, we prove that  amongst trees on sufficiently many vertices, paths have the largest probability of $B_p(G)$ being a zero forcing set. We conclude with some remarks and open questions in Section~\ref{Sec:remark}.

Throughout we use standard asymptotic notation. Let $f(n)$ and $g(n)$ be functions from the non-negative integers to the reals. We write $f(n) = o(g(n))$ if $\lim_{n\to\infty}{f(n)}/{g(n)} = 0$, and $f(n) = O(g(n))$ if there exists a $C > 0$ such that $f(n) \leq Cg(n)$ for all sufficiently large $n$. We write $f(n) = \Omega(g(n))$ if $g(n) = O(f(n))$, and $f(n) = \Theta(g(n))$ if both $f(n) = O(g(n))$ and $f(n) = \Omega(g(n))$.  We write, for example, $\Theta_k(g(n))$ if the implicit constants depend on $k$.

%
%
\section{Threshold Probabilities for Families of Graphs}\label{Sec:examples}
In this section we analyze the threshold probability for some families of graphs. These results provide an introduction to the various probabilistic and zero forcing arguments made throughout this paper, and also highlight some of the interesting properties of random set zero forcing.

We make use of the following inequalities throughout this section. Recall that
\begin{equation}\label{ineq:expvslin}
   1-x \le e^{-x}
\end{equation}
for all real values $x$, and
\begin{equation}\label{ineq:cexpvsclin}
   \left(1-\frac{c}{n}\right)^n \ge 1-c
\end{equation}
for $|c| \le n$ and $n \ge 1$.

 Let $G$ be a graph on $n\geq 2$ vertices with no isolated vertices. It is well known that every subset of $V(G)$ of size $n-1$ is a zero forcing set of $G$, and that $Z(G) = n-1$ if and only if $G=K_n$, the complete graph on $n$ vertices (see, for example, \cite{hogben2022inverse}). These observations imply the following.

\begin{prop}\label{prop:Kn thresh}
If $G$ is a graph on $n$ vertices with no isolated vertices, then $p(G) \leq p(K_n)$.  Moreover, $p(K_n) = 1 - \Theta(n^{-1})$.
\end{prop}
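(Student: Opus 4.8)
The plan is to prove both assertions by comparing $G$ to $K_n$ at the level of their families of zero forcing sets, and then to pin down $\thresh{K_n}$ by an explicit computation. The first, crucial observation is the one recalled just above the statement: in any $n$-vertex graph without isolated vertices every subset of size $n-1$ (and trivially the whole vertex set) is a zero forcing set, whereas in $K_n$ these are the \emph{only} zero forcing sets, since $Z(K_n)=n-1$. Identifying $\zfs(G)$ and $\zfs(K_n)$ as families of subsets of a common $n$-element vertex set, this says $\zfs(K_n)\sub\zfs(G)$, that is, $\zfs(K_n)=\{B:|B|\ge n-1\}$ is contained in $\zfs(G)$.

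Because the law of $B_p$ depends only on $n$ and $p$, the containment $\zfs(K_n)\sub\zfs(G)$ gives, on a common probability space,
\[\Pr[\Bzf{p}{G}]\ge \Pr[\Bzf{p}{K_n}]\]
for every $p$. The map $p\mapsto \Pr[\Bzf{p}{G}]$ is nondecreasing (since $\zfs(G)$ is closed under taking supersets) and equals $\half$ exactly at $p=\thresh{G}$. Evaluating the displayed inequality at $p=\thresh{K_n}$ yields $\Pr[B_{\thresh{K_n}}(G)\in\zfs(G)]\ge\half$, and monotonicity then forces $\thresh{G}\le\thresh{K_n}$, proving the first claim.

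For the moreover statement I would first record the exact formula. Since $\zfs(K_n)$ consists precisely of the subsets of size $n-1$ or $n$,
\[f(p):=\Pr[\Bzf{p}{K_n}]=p^n+np^{n-1}(1-p)=p^{n-1}\big(n-(n-1)p\big),\]
and differentiating gives $f'(p)=n(n-1)p^{n-2}(1-p)\ge 0$, so $f$ is increasing and $\thresh{K_n}$ is the unique root of $f(p)=\half$. It then suffices to sandwich this root between $1-3/n$ and $1-\tfrac{1}{2n}$. For the upper end, at $p=1-\tfrac{1}{2n}$ inequality (\ref{ineq:cexpvsclin}) gives $p^{n-1}\ge p^n\ge 1-\tfrac12=\tfrac12$ while $n-(n-1)p\ge 1$, so $f\big(1-\tfrac{1}{2n}\big)\ge\half$ and hence $\thresh{K_n}\le 1-\tfrac{1}{2n}$. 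For the lower end, at $p=1-3/n$ inequality (\ref{ineq:expvslin}) gives $p^{n-1}\le e^{-3(n-1)/n}$ while $n-(n-1)p=1+3(n-1)/n\le 4$, so $f(1-3/n)\le 4e^{-3(n-1)/n}\le\half$ for all large $n$ (since $4e^{-3}<\half$), forcing $\thresh{K_n}\ge 1-3/n$. Together these bounds give $\thresh{K_n}=1-\Theta(n^{-1})$.

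The only step requiring any care is the sandwich in the last paragraph: one must choose the explicit constants ($\tfrac12$ and $3$) so that the crude estimates from (\ref{ineq:expvslin}) and (\ref{ineq:cexpvsclin}) already clear the value $\half$, and keep track of the harmless gap between the exponents $n$ and $n-1$. I anticipate no genuine obstacle here — the entire argument rests on the clean identification $\zfs(K_n)=\{B:|B|\ge n-1\}$ together with the monotonicity of $f$.
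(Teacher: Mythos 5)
Your proof is correct and takes essentially the same approach as the paper's: both deduce $\Pr[\Bzf{p}{G}]\ge\Pr[\Bzf{p}{K_n}]$ from the fact that every set of size at least $n-1$ is zero forcing in $G$ while these are exactly the zero forcing sets of $K_n$, conclude $\thresh{G}\le \thresh{K_n}$ by monotonicity in $p$, and then sandwich $\thresh{K_n}$ by evaluating the explicit polynomial at $p=1-c/n$ using inequalities (\ref{ineq:expvslin}) and (\ref{ineq:cexpvsclin}). The only differences are cosmetic: your factored form $f(p)=p^{n-1}\bigl(n-(n-1)p\bigr)$ with the explicit constants $\tfrac12$ and $3$, versus the paper's bound $ce^{-c/2}+e^{-c}$ with $c=5$.
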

\begin{proof}
The result is immediate for $n=1$, so assume $n\geq2$. Define
\[
f(p)= n(1-p)p^{n-1} + p^n,
\]
which is the probability that $B_p(G)$ contains at least $n-1$ vertices.  Since every subset of $V(G)$ of size $n-1$ is a zero forcing set, we have for $p\in [0,1]$,
\[
\pr{\Bzf{p}{G}} \geq f(p) = \pr{\Bzf{p}{K_n}},
\]
where this equality used that a set $S$ is a zero forcing set of $K_n$ if and only if $|S|\geq n - 1$. Since $\pr{\Bzf{p}{G}}$ and $\pr{\Bzf{p}{K_n}}$ are increasing functions of $p$, we conclude that $p(G) \leq p(K_n)$.

We now prove the asymptotic result.  Let $p = 1 - c/n$, where $c\leq n$ is positive. By (\ref{ineq:cexpvsclin}),
\[
f(p)\ge p^n\ge 1-c,
\]
which implies $f(p)>1/2$ if $p>1-\frac{1}{2n}$.  Similarly, by (\ref{ineq:expvslin}),
\[
f(p)=c (1-c/n)^{n-1}+(1-c/n)^n\le c e^{-c+c/n}+e^{-c}\le c e^{-c/2}+e^{-c},
\]
where the last inequality holds since $n\geq2$. Thus $f(p)<1/2$ for $n\geq 5$ and $p<1-\frac{5}{n}$. We conclude $\thresh{K_n} = 1 - \Theta(n^{-1})$.
\end{proof}

By allowing for isolated vertices, it is possible for an $n$-vertex graph to have a higher threshold probability than $K_n$. Indeed, it is easy to show that the largest threshold probability amongst all graphs on $n$ vertices is $p(nK_1)$, where $nK_1$ denotes the graph on $n$ isolated vertices.  In fact, we can use Observation~\ref{obs: disjoint union} stated below to give the exact result $p(nK_1) = 2^{-1/n}$.  Moreover, Observation~\ref{obs: disjoint union} allows us to reduce our focus to connected graphs.
\begin{obs}\label{obs: disjoint union}
Let $G$ be the disjoint union of the graphs $G_1$ and $G_2$. Then
\[
\pr{B_p(G)\in \zfs(G)} = \pr{B_p(G_1)\in \zfs(G_1)}\cdot \pr{B_p(G_2)\in \zfs(G_2)}.
\]
\end{obs}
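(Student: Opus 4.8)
The plan is to reduce the statement to the fact that the zero forcing process respects the decomposition of $G$ into its two pieces, and then to invoke independence of the random inclusions.

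First I would observe that since $G$ is the disjoint union of $G_1$ and $G_2$, there are no edges between $V(G_1)$ and $V(G_2)$; hence for every vertex $u \in V(G_i)$ we have $N_G(u) = N_{G_i}(u) \subseteq V(G_i)$. Consequently, whenever a blue vertex $u \in V(G_i)$ forces a white neighbor $v$ during the zero forcing process on $G$, both $u$ and $v$ lie in $V(G_i)$, and the force is valid in $G$ precisely when it is valid in $G_i$. This means the coloring process on $G$ splits into two non-interacting processes, one on $G_1$ and one on $G_2$: the set of vertices eventually colored blue in $G$, starting from an initial blue set $B$, is the disjoint union of the sets eventually colored blue by running the process on each $G_i$ started from $B \cap V(G_i)$.

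From this I would deduce the key combinatorial characterization: a set $B \subseteq V(G)$ satisfies $B \in \zfs(G)$ if and only if $B \cap V(G_1) \in \zfs(G_1)$ and $B \cap V(G_2) \in \zfs(G_2)$. Indeed, $B$ eventually colors all of $V(G)$ blue exactly when each sub-process colors all of the corresponding $V(G_i)$ blue, by the previous paragraph.

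Finally I would translate this into probability. Writing $B = B_p(G)$, the restrictions $B \cap V(G_1)$ and $B \cap V(G_2)$ are distributed exactly as $B_p(G_1)$ and $B_p(G_2)$ respectively, since each vertex of $G$ is included independently with probability $p$. Because $V(G_1)$ and $V(G_2)$ are disjoint and the inclusions are mutually independent, the events $\{B \cap V(G_1) \in \zfs(G_1)\}$ and $\{B \cap V(G_2) \in \zfs(G_2)\}$ are independent. Applying the characterization above and multiplying the two probabilities yields the claimed factorization. The only genuine content is the first step, namely verifying that forces never cross between the two components so that the global process is the product of the two local processes; I expect this to be the main (and essentially the only) obstacle, and it is a mild one, with the remaining independence argument being routine.
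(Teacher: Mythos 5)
Your proof is correct, and it supplies exactly the routine argument the paper has in mind: the paper states this as an Observation with no proof at all, and your three steps (forces never cross between components since $N_G(u)=N_{G_i}(u)$ for $u\in V(G_i)$, hence $B\in\zfs(G)$ if and only if $B\cap V(G_i)\in\zfs(G_i)$ for $i=1,2$, and then independence of the vertex inclusions on the two disjoint vertex sets) are the standard justification. There is no gap; your write-up is if anything more detailed than the paper, which treats the whole statement as immediate.
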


While it is straightforward to determine the graphs with the largest threshold probabilities, the analogous problem for smallest thresholds appears much harder. Intuitively, the path graph $P_n$ is a natural candidate for the minimizer, since it is known that $P_n$ is the unique $n$-vertex graph with zero forcing number 1.

Towards this end, we establish the order of magnitude of $p(P_n)$.  By convention, we assume the vertices $v_1,\ldots,v_n$ of $P_n$ are in \textit{path order}, i.e., the edges of $P_n$ are $v_iv_{i+1}$ for $1\leq i \leq n-1$. Note that $S\subseteq V(P_n)$ is a zero forcing set if and only if $S$ contains an endpoint or $S$ contains two consecutive vertices (see Figure~\ref{fig:zf_path}).

\begin{figure}[h!]
    \centering
    \begin{subfigure}[b]{0.3\textwidth}
         \centering
         \includegraphics[scale=0.4]{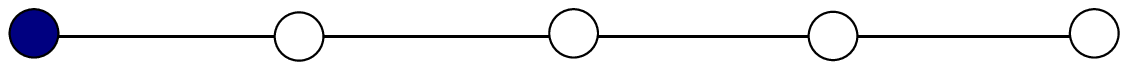}
         \label{fig:zf_path1}
     \end{subfigure}
     \hfill
     \begin{subfigure}[b]{0.3\textwidth}
         \centering
         \includegraphics[scale=0.4]{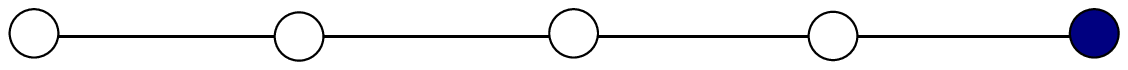}
         \label{fig:zf_path2}
     \end{subfigure}
     \hfill
     \begin{subfigure}[b]{0.3\textwidth}
         \centering
         \includegraphics[scale=0.4]{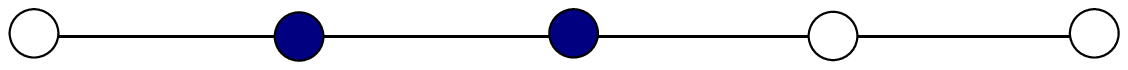}
         \label{fig:zf_path3}
     \end{subfigure}
    \caption{Three zero forcing sets for $P_5$}
    \label{fig:zf_path}
\end{figure}

\begin{prop}\label{prop:p(P_n)}
The threshold probability of the path on $n$ vertices satisfies \[p(P_n) = \Theta(n^{-1/2}).\]
\end{prop}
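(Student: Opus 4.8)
The plan is to exploit the combinatorial characterization recorded just before the statement: a set $S\sub V(P_n)$ fails to be a zero forcing set precisely when $S$ avoids both endpoints $v_1,v_n$ and contains no two consecutive vertices. Writing $E_1=\{v_1\in B_p(P_n)\}$, $E_n=\{v_n\in B_p(P_n)\}$, and $A_i=\{v_i,v_{i+1}\in B_p(P_n)\}$ for $1\le i\le n-1$, this says that $B_p(P_n)$ is a zero forcing set if and only if the event $E_1\cup E_n\cup\bigcup_{i=1}^{n-1}A_i$ occurs. Since $\Pr[\Bzf{p}{P_n}]$ is increasing in $p$ (zero forcing sets are upward closed, as used implicitly throughout), it suffices to produce constants $0<c<C$ so that this probability is below $\half$ at $p=cn^{-1/2}$ and above $\half$ at $p=Cn^{-1/2}$: the first yields $\thresh{P_n}\ge cn^{-1/2}$ and the second yields $\thresh{P_n}\le Cn^{-1/2}$, which together give the claim.

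For the lower bound $\thresh{P_n}=\Om(n^{-1/2})$, I would bound $\Pr[\Bzf{p}{P_n}]$ from above by a union bound over the events above,
\[\Pr[\Bzf{p}{P_n}]\le \Pr[E_1]+\Pr[E_n]+\sum_{i=1}^{n-1}\Pr[A_i]=2p+(n-1)p^2.\]
Taking $p=cn^{-1/2}$ makes the right-hand side at most $2cn^{-1/2}+c^2$, which is strictly below $\half$ for all large $n$ once $c$ is a sufficiently small constant (for instance $c=\tfrac12$, where it tends to $\tfrac14$). Hence $\Pr[\Bzf{p}{P_n}]<\half$ at this $p$, so $\thresh{P_n}\ge cn^{-1/2}$.

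For the upper bound $\thresh{P_n}=O(n^{-1/2})$, I would instead bound the probability that $B_p(P_n)$ is \emph{not} a zero forcing set. Dropping the endpoint constraints, this is at most the probability that $B_p(P_n)$ contains no two consecutive vertices. Partitioning $\{v_1,\dots,v_n\}$ into the $\floor{n/2}$ vertex-disjoint consecutive pairs $\{v_1,v_2\},\{v_3,v_4\},\dots$, the ``no two consecutive'' event forces each pair to miss at least one of its vertices; these pair-events are independent (disjoint vertex sets), each of probability $1-p^2$, so
\[1-\Pr[\Bzf{p}{P_n}]\le (1-p^2)^{\floor{n/2}}\le e^{-p^2\floor{n/2}},\]
using (\ref{ineq:expvslin}). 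With $p=Cn^{-1/2}$ the exponent tends to $-C^2/2$, which is below $-\ln 2$ for $C$ a large enough constant (for instance $C=2$); thus $\Pr[\Bzf{p}{P_n}]>\half$ at this $p$ for all large $n$, giving $\thresh{P_n}\le Cn^{-1/2}$.

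The steps are all elementary; the only point requiring care is the lower bound, where one should resist the temptation to compute the generating function of weighted independent sets of the path exactly via the transfer-matrix recurrence $a_m=(1-p)a_{m-1}+p(1-p)a_{m-2}$, whose dominant eigenvalue behaves like $1-p^2$ near $p=0$. The Bonferroni inequality above sidesteps this entirely: truncating inclusion–exclusion after the singleton and pair terms already pins down the $n^{-1/2}$ scaling, and the matching block-independence bound shows no sharper estimate is needed. Finally, one checks $p=Cn^{-1/2}\le 1$ for large $n$, so both test values are legitimate probabilities.
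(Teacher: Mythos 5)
Your proposal is correct and follows essentially the same route as the paper: the lower bound is the identical union bound $\Pr[\Bzf{p}{P_n}]\le 2p+(n-1)p^2$ (the paper phrases it via Markov's inequality on the count of consecutive pairs, which is the same estimate), and your upper bound via disjoint consecutive pairs is the complement of the paper's computation $\Pr[\Bzf{p}{P_n}]\ge 1-(1-p^2)^{\lfloor (n-1)/2\rfloor}$ using independence of the odd-indexed pair events. The constants and the monotonicity-in-$p$ bookkeeping are handled correctly, so there is nothing to fix.
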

\begin{proof}
Let $v_1,\ldots,v_n$ denote the vertices of $P_n$ (in path order). Define the random variable $X$ to be the number of indices $i \in \{1, 2, \cdots, n-1\}$ such that $v_i, v_{i+1} \in B_p(P_n)$.  Markov's inequality yields
\[
\pr{X \ge 1} \le \mathbb{E}[X] = (n-1)p^2.
\]
Since $\Bzf{p}{P_n}$ if and only if either $X\ge 1$ or at least one of $v_1,v_n\in B_p(P_n)$, a union bound now implies
\[
\pr{\Bzf{p}{P_n}} \le (n-1)p^2 + 2p.
\]
This quantity is less than $1/2$ provided $p = cn^{-1/2}$ for any $c < 1/4$. Thus $\thresh{P_n}=\Om(n^{-1/2})$.

Next, for $i\in\{1,2,\ldots,n-1\}$, let $A_i$ be the event that $v_i, v_{i+1} \in B_p(P_n)$, and define $\displaystyle A = \bigcup_{i \text{ odd}} A_i$. Then,
\begin{align*}
    \pr{\Bzf{p}{P_n}} &\ge \pr{A} = 1 - \prod_{i \text{ odd}} (1 - \pr{A_i})\\
    &= 1 - (1-p^2)^{\lfloor (n-1)/2 \rfloor} \ge 1 - e^{-p^2 \lfloor (n-1)/2 \rfloor},
\end{align*}
where the first equality follows from the fact that these events are independent, and the  last inequality follows from (\ref{ineq:expvslin}).  This probability will be greater than $1/2$ for $p = Cn^{-1/2}$ with $C$ sufficiently large.  We conclude that $\thresh{P_n}=\Theta(n^{-1/2})$.
\end{proof}

A similar bound holds for the cycle graph $C_n$.
\begin{prop}\label{prop:p(C_n)}
The threshold probability of the cycle on $n$ vertices satisfies
\[p(C_n) = \Theta(n^{-1/2}).\]
\end{prop}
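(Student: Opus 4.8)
The plan is to mirror the proof of Proposition~\ref{prop:p(P_n)}, the only genuinely new ingredient being a characterization of the zero forcing sets of $C_n$. Write $v_1,\dots,v_n$ for the vertices of $C_n$ in cyclic order, with indices read modulo $n$. First I would record the standard fact that, since every vertex of $C_n$ has degree $2$, a blue vertex can force exactly when precisely one of its two neighbors is blue; consequently $S\sub V(C_n)$ is a zero forcing set if and only if $S$ contains two (cyclically) consecutive vertices $v_i,v_{i+1}$. Indeed, if it does, then $v_i$ forces $v_{i-1}$, which in turn forces $v_{i-2}$, and this propagates around the entire cycle; conversely, if no two elements of $S$ are consecutive, then every blue vertex starts with both of its neighbors white and no force can ever occur. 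Thus $\Bzf{p}{C_n}$ holds precisely when $B_p(C_n)$ contains a consecutive pair.

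For the lower bound $\thresh{C_n}=\Om(n^{-1/2})$, I would let $X$ count the indices $i\in\{1,\dots,n\}$ with $v_i,v_{i+1}\in B_p(C_n)$. Since $\E[X]=np^2$, Markov's inequality together with the characterization above gives
\[
\pr{\Bzf{p}{C_n}}=\pr{X\ge 1}\le \E[X]=np^2,
\]
which drops below $\half$ once $p=cn^{-1/2}$ for a small enough constant $c$. Note this is even cleaner than the path case, since the cyclic symmetry removes the need for a separate endpoint contribution in the union bound.

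For the matching upper bound, I would again pass to a family of independent events by choosing disjoint index pairs. Letting $A_i$ be the event $v_i,v_{i+1}\in B_p(C_n)$ and restricting to odd $i$ yields $\floor{n/2}$ pairwise disjoint, hence independent, events, so that
\[
\pr{\Bzf{p}{C_n}}\ge 1-\prod_{i\text{ odd}}\bigl(1-\pr{A_i}\bigr)=1-(1-p^2)^{\floor{n/2}}\ge 1-e^{-p^2\floor{n/2}},
\]
where the last step uses (\ref{ineq:expvslin}). This exceeds $\half$ for $p=Cn^{-1/2}$ with $C$ sufficiently large, giving $\thresh{C_n}=O(n^{-1/2})$ and hence $\thresh{C_n}=\Theta(n^{-1/2})$.

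I do not expect any serious obstacle here: the argument is a near-verbatim adaptation of Proposition~\ref{prop:p(P_n)}, and the closest thing to a subtlety is the parity bookkeeping in the disjoint-pairs count (that odd indices supply $\floor{n/2}$ disjoint pairs for both parities of $n$) together with a clean statement of the cycle's zero forcing characterization, both of which are routine.
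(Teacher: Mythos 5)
Your proof is correct and takes essentially the same approach as the paper's: the same characterization of zero forcing sets of $C_n$ as exactly those containing a (cyclically) consecutive pair, the same first-moment/union bound giving $\pr{\Bzf{p}{C_n}}\le np^2$, and the same disjoint-pairs independence argument for the other direction (the paper uses $\floor{(n-1)/2}$ disjoint pairs rather than your $\floor{n/2}$, an immaterial difference, and your parity bookkeeping is fine as long as for odd $n$ the wraparound pair $(v_n,v_1)$ is dropped, which your restriction to disjoint pairs already ensures). No gaps.
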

\begin{proof}
Observe that $S$ is a zero forcing set of $C_n$ if and only if $S$ contains a pair of consecutive vertices. Thus, using an argument similar to the proof of Proposition \ref{prop:p(P_n)}, we find
\[
1 - (1-p^2)^{\lfloor (n-1)/2 \rfloor} \le \pr{\Bzf{p}{C_n}} \le np^2.
\]
We conclude that $\thresh{C_n}=\Theta(n^{-1/2})$.
\end{proof}

It is perhaps intuitive that $p(C_n)\approx p(P_n)$ since $C_n$ can be formed from $P_n$ by adding a single edge.  However, there are examples where this intuition fails. Indeed, let $v_1,\ldots,v_n$ be the vertices of $P_n$, and let $R_n$ denote the graph obtained from $P_n$ by adding the edge $v_1v_3$ (see Figure~\ref{fig:R_n}).

\begin{figure}[h!]
    \centering
    \begin{tikzpicture}
        \node {\includegraphics[width=0.5\textwidth]{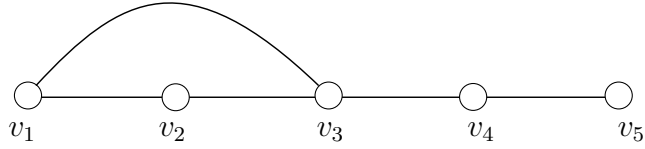}};
        \node at (-4, -1) {$v_1$};
        \node at (-2, -1) {$v_2$};
        \node at (0.1, -1) {$v_3$};
        \node at (2.1, -1) {$v_4$};
        \node at (4.1, -1) {$v_5$};
    \end{tikzpicture}
    \caption{The triangle with pendant path on five vertices, $R_5$.} 
    \label{fig:R_n}
\end{figure}

\begin{prop}\label{prop:p(R_n)}
The threshold probability of the triangle with a pendant path on $n$ vertices satisfies \[p(R_n) = \Theta(1).\]
\end{prop}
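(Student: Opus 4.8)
The plan is to single out the pair $\{v_1,v_2\}$, which plays a dual role in $R_n$: it is simultaneously a (minimum) zero forcing set and an unavoidable ``trap'' that \emph{every} zero forcing set must meet. These two facts pin $\thresh{R_n}$ between two absolute constants. The lower bound $\thresh{R_n}=\Om(1)$ is the heart of the matter and is exactly what separates $R_n$ from $P_n$, so I would prove it first.

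For the lower bound I would show that if $B\in\zfs(R_n)$, then $v_1\in B$ or $v_2\in B$. Suppose instead that $v_1,v_2\notin B$, so both begin white. The neighborhoods are $N(v_1)=\{v_2,v_3\}$ and $N(v_2)=\{v_1,v_3\}$, so the only vertices that could ever force $v_1$ (respectively $v_2$) are $v_2,v_3$ (respectively $v_1,v_3$). Consider the first moment at which one of $v_1,v_2$ is colored blue, say $v_1$; just before this force both are white, so the forcer cannot be the white vertex $v_2$ and must therefore be $v_3$. But $v_3$ forces $v_1$ only if $v_1$ is its unique white neighbor, which fails since $v_2\in N(v_3)$ is still white. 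This contradiction shows neither $v_1$ nor $v_2$ is ever colored, so $B\notin\zfs(R_n)$. Hence
\[\pr{\Bzf{p}{R_n}}\le \pr{\{v_1,v_2\}\cap B_p(R_n)\ne\emptyset}=1-(1-p)^2.\]
At $p=\rec{4}$ the right-hand side equals $\tfrac{7}{16}<\half$, and since $p\mapsto\pr{\Bzf{p}{R_n}}$ is increasing, this gives $\thresh{R_n}>\rec{4}$.

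For the upper bound I would verify that $\{v_1,v_2\}$ is itself a zero forcing set: with $v_1,v_2$ blue, $v_1$ forces $v_3$ (its only white neighbor), and then $v_3,v_4,\dots$ force in order down the pendant path. As any superset of a zero forcing set is again zero forcing, the event $B_p(R_n)\supseteq\{v_1,v_2\}$ already guarantees $\Bzf{p}{R_n}$, so
\[\pr{\Bzf{p}{R_n}}\ge p^2.\]
At $p=2^{-1/2}$ this equals $\half$, whence $\thresh{R_n}\le 2^{-1/2}$. Combining the two estimates yields $\rec{4}<\thresh{R_n}\le 2^{-1/2}$, i.e.\ $\thresh{R_n}=\Theta(1)$.

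The only genuine obstacle is the forcing-obstruction argument establishing the necessary condition; everything else reduces to evaluating a two-variable probability at a fixed $p$ and invoking monotonicity of the threshold, exactly as in the proof of Proposition~\ref{prop:Kn thresh}. The conceptual takeaway worth stressing is that adding the single edge $v_1v_3$ to $P_n$ converts $\{v_1,v_2\}$ into a two-element set that the forcing process can never enter from the outside, so every zero forcing set must pay an $\Om(1)$ ``entrance cost'' of containing $v_1$ or $v_2$---in sharp contrast to the path, where $\thresh{P_n}=\Theta(n^{-1/2})\to 0$.
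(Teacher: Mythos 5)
Your proof is correct and takes essentially the same approach as the paper: both hinge on the observation that every zero forcing set of $R_n$ must contain $v_1$ or $v_2$, which bounds $\Pr[\Bzf{p}{R_n}]$ by (roughly) $2p$ and yields $\thresh{R_n}\geq 1/4$. You merely supply two details the paper leaves implicit---the fort-style argument showing the process can never color $\{v_1,v_2\}$ from outside, and the explicit bound $\thresh{R_n}\le 2^{-1/2}$ from $\{v_1,v_2\}\in\zfs(R_n)$, where the paper simply uses the trivial $\thresh{R_n}\le 1$.
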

\begin{proof}
Note that any zero forcing set of $R_n$ must contain either $v_1$ or $v_2$.  Thus,
\[
\pr{\Bzf{p}{R_n}} \le \pr{v_1\in B_p(R_n) \text{ or } v_2 \in B_p(R_n)} \le 2p.
\]
This implies $\thresh{R_n} \in [1/4,1]$, and hence $\thresh{R_n} = \Theta(1)$.
\end{proof}

\begin{rem}
In the deterministic setting, it is well known that the zero forcing number $Z(G)$ of a graph $G$ changes by at most one if a single edge or vertex is removed from $G$.  This is far from true for $p(G)$.  Indeed, recall that $p(P_n)=\Theta(n^{-1/2})$.  Let $P'_n$ be obtained by deleting the edge $v_1v_2$.  Since $P'_n$ has $K_1$ as a connected component, by Observation~\ref{obs: disjoint union} we have $p(P'_n)\ge p(K_1)=\half$.  A similar result holds if one deletes $v_2$.

We next show that deleting edges or vertices can dramatically decrease $p(G)$. Consider the triangle with pendant path $R_n$, which has $\thresh{R_n} = \Theta(1)$.  If one deletes $v_1$, then the resulting graph is $P_{n-1}$, which has $\thresh{P_{n-1}} = \Theta(n^{-1/2})$.  If one deletes the edge $v_1v_3$, then the resulting graph is $P_n$, which has $\thresh{P_n} = \Theta(n^{-1/2})$.
\end{rem}

We end with a table that summarizes our main results of this section and states (without proof) other results that can be obtained using our methods.

\begin{table}
\caption{Thresholds for graph families.}\label{tab:examples}
\begin{center}
\begin{tabular}{ l | l | l }
Family & Description & Threshold Probability\\ \hline
$K_n$ & Complete graph on $n$ vertices: Proposition \ref{prop:Kn thresh} & $1-\Theta(n^{-1})$\\
$nK_1$ & Graph on $n$ isolated vertices & $ 2^{-1/n}$\\

$K_{n_1, \cdots, n_k}$ & Complete multipartite graph &  $1-\Theta_k(\min_i\{n_i^{-1}\})$\\
$P_n$ & Path on $n$ vertices: Proposition \ref{prop:p(P_n)} &  $\Theta(n^{-1/2})$\\
$C_n$ & Cycle on $n$ vertices: Proposition \ref{prop:p(C_n)} &  $\Theta(n^{-1/2})$\\

$W_n$ & Wheel on $n$ vertices: $C_{n-1} +  K_1$ & $\Theta(n^{-1/3})$

\end{tabular}
\end{center}
\end{table}

\section{Bounds Using Degrees}\label{Sec:degree}
In this section we prove bounds on the probability that $B_p(G)$ is a zero forcing set in terms of the degree sequence of $G$.  Our most general bound of this form is the following, where here and throughout $d(v)$ denotes the degree of $v$ in the graph $G$.

\begin{lem}\label{lem:degreeLower}
Let $G$ be an $n$-vertex graph with at least one edge and $p\in [0,1]$. Then
\[\Pr[\Bzf{p}{G} ]\le\sum_{v\in V(G)} d(v) p^{d(v)}.\]
\end{lem}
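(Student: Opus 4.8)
The plan is to bound the probability by localizing on the very first force of a successful forcing process and then applying a single union bound. Recall that $B$ is a zero forcing set exactly when the iterative process started from $B$ eventually colors all of $V(G)$ blue, and that a blue vertex $u$ may force a white neighbor $v$ precisely when $v$ is the \emph{only} white vertex in $N(u)$. The key observation is that whenever $B_p(G)$ is a zero forcing set, there is a vertex poised to force at the first step, and the local configuration needed for such a force is governed entirely by the degree of that vertex.

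First I would introduce, for each ordered pair $(u,v)$ with $uv\in E(G)$, the event $F_{u,v}$ that $u\in B_p(G)$ and every neighbor of $u$ other than $v$ also lies in $B_p(G)$. This event constrains exactly $d(u)$ prescribed vertices to be selected, namely $u$ together with the $d(u)-1$ vertices of $N(u)\setminus\{v\}$, and places no constraint on $v$ itself; since vertices are included independently with probability $p$, we get $\Pr[F_{u,v}]=p^{d(u)}$. I would then show the containment of events
\[
\{\Bzf{p}{G}\}\subseteq \bigcup_{uv\in E(G)} F_{u,v},
\]
splitting into two cases. If $B_p(G)\ne V(G)$ and $B_p(G)$ is a zero forcing set, then the process must make progress, so at the first step some blue vertex $u$ forces a white neighbor $v$; as no force has yet occurred, the blue set is still exactly $B_p(G)$, whence $u\in B_p(G)$ and $N(u)\setminus\{v\}\subseteq B_p(G)$, so $F_{u,v}$ holds. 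If instead $B_p(G)=V(G)$, then, using that $G$ has at least one edge $uv$, the event $F_{u,v}$ holds trivially since all vertices are selected. This second case is the only place the hypothesis of at least one edge is needed: it absorbs the degenerate all-blue configuration, in which no genuine force takes place, into the same family of events.

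Finally, a union bound followed by regrouping the sum over ordered pairs according to the forcing vertex $u$ gives
\[
\Pr[\Bzf{p}{G}]\le \sum_{uv\in E(G)}\Pr[F_{u,v}]=\sum_{u\in V(G)}\sum_{v\in N(u)} p^{d(u)}=\sum_{u\in V(G)} d(u)\,p^{d(u)},
\]
which is the claimed bound. This is essentially a one-shot union bound, so I do not anticipate any serious analytic obstacle; the only points demanding care are the combinatorial bookkeeping that a first force pins down exactly $d(u)$ blue vertices (giving the exponent $d(u)$ rather than $d(u)+1$, since $v$ is unconstrained), and the verification that the $B_p(G)=V(G)$ case is correctly captured, which is precisely what the at-least-one-edge assumption secures.
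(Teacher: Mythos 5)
Your proof is correct, and it follows the same basic strategy as the paper's proof---localizing on the first force and applying a union bound---but with a genuinely cleaner decomposition that changes the endgame. The paper defines, for each vertex $v$, the event $F_v$ that $v$ and \emph{exactly} $d(v)-1$ of its neighbors lie in $B_p(G)$, which has probability $d(v)p^{d(v)}(1-p)$, and must add a separate event $A=\{B_p(G)=V(G)\}$ of probability $p^n$ to cover the all-blue configuration; it then needs a final algebraic step, picking a vertex $u$ with $d(u)\ge 1$ and using $d(u)+1\le n$ to show $d(u)p^{d(u)}(1-p)\le d(u)p^{d(u)}-p^n$, in order to absorb the $p^n$ term and reach the clean stated bound. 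By instead indexing events by ordered pairs $(u,v)$ with $uv\in E(G)$ and leaving the designated neighbor $v$ unconstrained, you get events of probability exactly $p^{d(u)}$ whose union automatically contains the all-blue configuration (via any edge, which is where the at-least-one-edge hypothesis enters in both arguments), so the union bound yields $\sum_{u\in V(G)} d(u)p^{d(u)}$ directly, with no $(1-p)$ factor and no absorption step; your version trades the paper's ``exactly $d(v)-1$ blue neighbors'' events for slightly larger per-pair events in exchange for eliminating the case analysis at the end. One small notational point: in your displayed union and union bound you write $\bigcup_{uv\in E(G)}$ and $\sum_{uv\in E(G)}$, which read as ranging over unordered edges; since $F_{u,v}\ne F_{v,u}$, you should make explicit that these range over ordered pairs (as your prose and your regrouping $\sum_{u}\sum_{v\in N(u)}$ indeed intend), so that each edge contributes two terms.
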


\begin{proof}
Let $A$ be the event that $B_p(G)=V(G)$.  For any $v\in V(G)$, let $F_v$ be the event that $v$ and exactly $d(v)-1$ of its neighbors are in $B_p(G)$.  We claim that for $B_p(G)$ to be a zero forcing set, either $A$ or $F_v$ for some $v$ must occur.  Indeed, if $B_p(G)\ne V(G)$ and $B_p(G)$ is a zero forcing set, then there must be some blue vertex $v$ in $B_p(G)$ that forces a white vertex to be blue.  In particular, if $v$ is the first vertex which performs such a force, then it and exactly $d(v)-1$ of its neighbors must be in $B_p(G)$.   This proves our claim.  Thus by the union bound we have
\[\Pr[\Bzf{p}{G}]\le \Pr[A\cup \bigcup F_v]\le \Pr[A]+\sum_{v\in V(G)} \Pr[F_v].\]
By definition, the event $F_v$ occurring means $v$ is included in $B_p(G)$ and exactly $d(v)-1$ of its $d(v)$ neighbors are included in $B_p(G)$.  As each vertex is included in $B_p(G)$ independently and with probability $p$, we have
\[\Pr[F_v]=p\cdot {d(v)\choose d(v)-1} p^{d(v)-1}(1-p)=d(v) p^{d(v)}(1-p).\]
Plugging this into the bound above and using $\Pr[A]=p^n$ gives
\begin{equation}\Pr[\Bzf{p}{G}]\le p^n+\sum_{v\in V(G)} d(v) p^{d(v)}(1-p).\label{eq:uglyDegree}\end{equation}

By assumption, $G$ contains a vertex $u$ with $d(u)\ge 1$. For this vertex we have \[d(u)p^{d(u)}(1-p)= d(u)p^{d(u)}-d(u)p^{d(u)+1}\le d(u)p^{d(u)}-p^n,\]
where this last step used $d(u)\ge 1$ and $d(u)+1\le n$ (which always holds for $n$-vertex graphs).  Plugging this bound into \eqref{eq:uglyDegree}, and using the bound $d(v)p^{d(v)}(1-p)\le d(v)p^{d(v)}$ for every other term of the sum gives the desired result.
\end{proof}

We also make use of the following, which can be proven using calculus.
\begin{obs}\label{obs:calculus}
If $d$ is a positive integer and $p\le e^{-1/d}$, then
\[\max_{x\ge d} x p^x=dp^d.\]
\end{obs}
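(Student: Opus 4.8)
The plan is a direct single-variable calculus argument: I would show that on the interval $[d,\infty)$ the function $f(x)=xp^x$ is nonincreasing, so that its maximum over this range is attained at the left endpoint $x=d$, giving exactly $f(d)=dp^d$. First I would dispose of the degenerate case $p=0$, where $f(x)=0$ for every $x\ge d$ and in particular $f(d)=dp^d=0$, so the claim is immediate. For the remaining range $0<p\le e^{-1/d}$, note that since $d\ge 1$ we have $e^{-1/d}<1$, hence $0<p<1$ and $\ln p<0$; this sign will drive the entire argument.

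Next I would differentiate. Writing $f(x)=x e^{x\ln p}$, one computes
\[f'(x)=p^x\bigl(1+x\ln p\bigr).\]
Since $p^x>0$, the sign of $f'(x)$ matches that of $1+x\ln p$, which (because $\ln p<0$) is a strictly decreasing linear function of $x$ that vanishes exactly at $x^\ast=-1/\ln p$. Thus $f$ increases on $[0,x^\ast]$ and decreases on $[x^\ast,\infty)$, so everything reduces to comparing the location of this single critical point with $d$.

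The one substantive step is showing $x^\ast\le d$. The hypothesis $p\le e^{-1/d}$ gives $\ln p\le -1/d$; since both sides are negative, taking reciprocals reverses the inequality and yields $-1/\ln p\le d$, that is $x^\ast\le d$. Consequently the entire interval $[d,\infty)$ lies in the decreasing region of $f$, whence $\max_{x\ge d} xp^x=f(d)=dp^d$, as claimed. I do not anticipate any genuine obstacle here: the only points requiring care are the sign bookkeeping when inverting $\ln p\le -1/d$ and the separate (trivial) treatment of $p=0$.
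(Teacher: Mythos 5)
Your argument is correct: the sign analysis of $f'(x)=p^x(1+x\ln p)$, the inversion of $\ln p\le -1/d$ to get the critical point $-1/\ln p\le d$, and the separate treatment of $p=0$ are all sound. The paper states this observation without proof, noting only that it "can be proven using calculus," and your single-variable monotonicity argument is exactly that intended routine proof.
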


This result quickly gives Theorem~\ref{thm:degree}, which we restate below.

\begin{thmn}[\ref*{thm:degree}]
Let $G$ be an $n$-vertex graph with minimum degree at least $\del\ge 1$.  For all $p$, we have
\[\Pr[\Bzf{p}{G}]\le \del n p^\del.\]
\end{thmn}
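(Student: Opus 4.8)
The plan is to feed the degree bound of Lemma~\ref{lem:degreeLower} into the pointwise optimization of Observation~\ref{obs:calculus}, splitting the range of $p$ at the value $e^{-1/\del}$. Note first that a graph of minimum degree at least $\del\ge 1$ automatically has at least one edge, so Lemma~\ref{lem:degreeLower} applies and gives $\Pr[\Bzf{p}{G}]\le \sum_{v\in V(G)} d(v)p^{d(v)}$, where every vertex satisfies $d(v)\ge\del$.

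The easy range is $p\le e^{-1/\del}$. Here I would apply Observation~\ref{obs:calculus} with $d=\del$, which yields $xp^{x}\le \del p^{\del}$ for every real $x\ge\del$; in particular $d(v)p^{d(v)}\le \del p^{\del}$ for each of the $n$ vertices. Summing this termwise bound over $V(G)$ gives $\sum_{v} d(v)p^{d(v)}\le n\del p^{\del}$, which is exactly the claimed inequality.

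The complementary range $p> e^{-1/\del}$ is the only place where any care is needed, and it is the step I expect to be the main obstacle: Observation~\ref{obs:calculus} fails here, since for such $p$ the maximum of $xp^{x}$ over $x\ge\del$ is attained at the interior point $-1/\ln p>\del$ rather than at $\del$, so the termwise comparison above is no longer valid. Instead I would fall back on the trivial bound $\Pr[\Bzf{p}{G}]\le 1$ and show the right-hand side already exceeds $1$. From $p> e^{-1/\del}$ we get $p^{\del}> e^{-1}$, hence $\del n p^{\del}> \del n/e\ge 1$ whenever $\del n\ge 3$. Since a graph of minimum degree at least $\del$ has at least $\del+1$ vertices, the inequality $\del n\le 2$ forces $\del=1$ and $n=2$, i.e.\ $G=K_2$.

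That single exceptional graph I would dispatch by hand: for $K_2$ a set is zero forcing iff it is nonempty, so $\Pr[\Bzf{p}{K_2}]=1-(1-p)^2=2p-p^2\le 2p=\del n p^{\del}$, which holds for all $p\in[0,1]$. Combining the three cases—the termwise argument for $p\le e^{-1/\del}$, the crude bound for $p>e^{-1/\del}$ with $\del n\ge 3$, and the direct check on $K_2$—covers every $n$-vertex graph of minimum degree at least $\del$ and completes the proof. The only conceptual work is recognizing that Observation~\ref{obs:calculus} controls the whole sum precisely on $[0,e^{-1/\del}]$ and routing the rest through $\Pr\le 1$.
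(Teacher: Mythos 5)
Your proposal is correct and takes essentially the same route as the paper's proof: both split at $p=e^{-1/\del}$, combine Lemma~\ref{lem:degreeLower} with Observation~\ref{obs:calculus} termwise for $p\le e^{-1/\del}$, dispatch $p>e^{-1/\del}$ by noting $\del n p^{\del}\ge e^{-1}\del n\ge 1$, and verify $K_2$ directly. Your derivation that $K_2$ is the unique exceptional graph (via $\del n\le 2$) is just a slightly more explicit packaging of the paper's blanket assumption $n\ge 3$ after the $K_2$ check.
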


\begin{proof}
When $G=K_2$ the theorem is equivalent to $2p(1-p)+p^2\le 2p$, i.e. that $-p^2\le 0$, so the result holds.  From now on we assume $G$ has at least 3 vertices.  For all $p$ and $n\ge 3$, we have
\[\Pr[\Bzf{p}{G}] \le e^{-1} \del n\]
since $e^{-1}\del n\ge 1$.  This implies the result when $p\ge e^{-1/\del}$.

Observation~\ref{obs:calculus} together with Lemma~\ref{lem:degreeLower} and the fact that $d(v)\ge \delta$ for all $v$ gives
\[\Pr[\Bzf{p}{G}]\le\sum_{v\in V(G)} d(v) p^{d(v)} \le \del n p^{\del}.\qedhere\]
\end{proof}

We next prove a slightly stronger version of this theorem which holds for graphs with ``few'' vertices of degree less than a given degree $d$.

\begin{thm}\label{thm:degreeLower}
Let $G$ be an $n$-vertex graph without isolated vertices.  Suppose that there exist integers $1\le d\le n$ and $N\ge 0$ such that $G$ contains at most $N^k$ vertices of degree $k$ for all $1\le k<d$. Then for all $p\le e^{-1/d}$, we have
\[\Pr[\Bzf{p}{G}]\le 4pN+dnp^d.\]
\end{thm}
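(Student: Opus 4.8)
The plan is to start from the degree sum supplied by Lemma~\ref{lem:degreeLower}, namely
\[\Pr[\Bzf{p}{G}]\le \sum_{v\in V(G)} d(v)p^{d(v)},\]
which applies since the no-isolated-vertices hypothesis guarantees $G$ has at least one edge. I would then split the vertices according to whether their degree is at least $d$ or strictly less than $d$, bounding the high-degree contribution exactly as in the proof of Theorem~\ref{thm:degree}, and bounding the low-degree contribution using the hypothesis that there are at most $N^k$ vertices of each degree $k<d$.

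For the high-degree part, every vertex $v$ with $d(v)\ge d$ satisfies $d(v)p^{d(v)}\le \max_{x\ge d} xp^x = dp^d$ by Observation~\ref{obs:calculus}, which applies precisely because $p\le e^{-1/d}$. Since there are at most $n$ such vertices, their total contribution is at most $dnp^d$, producing the second term of the claimed bound.

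For the low-degree part, I would group the vertices by their exact degree $k$ with $1\le k<d$. Using at most $N^k$ vertices of degree $k$, each contributing $kp^k$, the total is at most
\[\sum_{k=1}^{d-1} N^k\cdot kp^k=\sum_{k=1}^{d-1} k(Np)^k.\]
Writing $q=Np$, the anticipated main obstacle is that this sum is only controllable when $q$ is bounded away from $1$: the full series $\sum_{k\ge 1}kq^k=q/(1-q)^2$ is at most $4q$ exactly when $q\le \tfrac12$. The resolution is a case split on the size of $Np$. If $Np\ge \tfrac12$, then $4pN\ge 2>1\ge \Pr[\Bzf{p}{G}]$, so the desired inequality holds trivially, regardless of the high-degree term. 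If instead $Np<\tfrac12$, then $(1-Np)^2>\tfrac14$, so
\[\sum_{k=1}^{d-1} k(Np)^k\le \sum_{k\ge 1} k(Np)^k = \frac{Np}{(1-Np)^2}\le 4Np=4pN,\]
giving the first term.

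Combining the two parts in the nontrivial regime $Np<\tfrac12$ yields $\Pr[\Bzf{p}{G}]\le 4pN+dnp^d$, completing the proof; the degenerate case $N=0$ (minimum degree at least $d$) is automatically covered since the low-degree sum is then empty and $4pN=0$. The only genuinely delicate point is recognizing that large $Np$ should not be attacked with the series estimate at all, as the target bound is already vacuous there.
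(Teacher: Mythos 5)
Your proposal is correct and matches the paper's proof essentially step for step: both start from Lemma~\ref{lem:degreeLower}, split vertices at degree $d$, control the high-degree sum via Observation~\ref{obs:calculus} (using $p\le e^{-1/d}$) to get $dnp^d$, and bound the low-degree sum by $\sum_{k\ge 1}k(pN)^k=\frac{pN}{(1-pN)^2}\le 4pN$ after disposing of the regime $pN>\tfrac{1}{2}$ as trivial. The only cosmetic difference is that the paper dispatches the trivial case at the outset rather than at the end, so there is nothing to add.
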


\begin{proof}
The result is trivial if $pN>\half$, so we can assume $pN\le \half$.  
Using Observation~\ref{obs:calculus} together with Lemma~\ref{lem:degreeLower} and the assumptions on $G$, we find
\begin{align*}
   \Pr[\Bzf{p}{G}]&\le\sum_{v\in V(G)} d(v) p^{d(v)} \le \sum_{\substack{v\in V(G)\\ d(v)<d}} d(v) p^{d(v)}+\sum_{\substack{v\in  V(G)\\ d(v)\ge d}} d p^d\\&\le \sum_{k=1}^{d-1} k(pN)^k+dnp^d\le \sum_{k=1}^\infty k(pN)^k+dnp^d.
\end{align*}
Note that in general we have
$\sum_{k=1}^\infty kc^k=\f{c}{(c-1)^2}$
provided $|c|<1$.  Applying this with $c=pN\le \half$ gives the desired result.
\end{proof}

We now prove analogs of these results for $z(G;k)$.
\begin{lem}\label{lem:degreeCount}
Let $G$ be an $n$-vertex graph with at least one edge. Then for all non-negative integers $k$,
\[z(G;k)\le \sum_{v\in V(G)} d(v){n-d(v)\choose k-d(v)}.\]
\end{lem}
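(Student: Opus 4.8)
The plan is to mirror the proof of Lemma~\ref{lem:degreeLower}, replacing the probabilistic union bound with a counting union bound over families of $k$-subsets. The key structural fact, already established in that proof, is that every zero forcing set $B$ of $G$ either equals $V(G)$ or possesses a \emph{first forcing vertex}: a vertex $v\in B$ that performs the first force. At the moment this force occurs no white vertex has yet been recolored, so the blue set is exactly $B$; hence $v$ together with exactly $d(v)-1$ of its $d(v)$ neighbors lies in $B$, while the one remaining neighbor $u$ is white.

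First I would set up a family of sets covering all zero forcing sets of size $k$. For each vertex $v$ and each neighbor $u\in N(v)$, let $S_{v,u}$ denote the collection of all $k$-subsets $B\subseteq V(G)$ satisfying $v\in B$ and $N(v)\setminus\{u\}\subseteq B$ (so $B$ contains $v$ and all of its neighbors except possibly $u$). The point of relaxing the condition to allow $u\in B$ is that it lets $V(G)$ itself belong to every $S_{v,u}$, so the full set is handled for free rather than as a separate case. I claim every zero forcing set of size $k$ lies in some $S_{v,u}$: the set $V(G)$ lies in all of them, and any other zero forcing set $B$ lies in $S_{v,u}$ where $v$ is its first forcing vertex and $u$ is the unique white neighbor of $v$, since then $v\in B$ and the other $d(v)-1$ neighbors of $v$ are blue, i.e. $N(v)\setminus\{u\}\subseteq B$.

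Next I would count $|S_{v,u}|$. Membership in $S_{v,u}$ forces $v$ and the $d(v)-1$ vertices of $N(v)\setminus\{u\}$ into $B$; the remaining $k-d(v)$ elements of $B$ may then be chosen freely among the other $n-d(v)$ vertices, so $|S_{v,u}|=\binom{n-d(v)}{k-d(v)}$ (which is correctly $0$ when $k<d(v)$). Since for each $v$ there are $d(v)$ choices of $u\in N(v)$, summing the sizes of the covering sets and using that their union contains every zero forcing set of size $k$ gives
\[
z(G;k)\le\sum_{v\in V(G)}\sum_{u\in N(v)}\binom{n-d(v)}{k-d(v)}=\sum_{v\in V(G)}d(v)\binom{n-d(v)}{k-d(v)},
\]
as desired. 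I do not expect a genuine obstacle here; the only points requiring care are verifying that the first-forcing-vertex argument places each $B$ in an appropriate $S_{v,u}$ and ensuring $V(G)$ is covered, both of which are settled cleanly by the relaxed definition of $S_{v,u}$. The hypothesis that $G$ has at least one edge is used precisely to guarantee that the index set of pairs $(v,u)$ is nonempty, which is what lets $V(G)$ be covered in the boundary case $k=n$ (where $z(G;n)=1$).
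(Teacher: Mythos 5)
Your proof is correct and takes essentially the same approach as the paper: identify the first forcing vertex $v$, observe that $v$ together with exactly $d(v)-1$ of its neighbors must lie in the set, and bound the count by the $\binom{n-d(v)}{k-d(v)}$ completions, summed over the $d(v)$ choices of the excluded neighbor $u$. The only difference is packaging --- the paper dismisses $k=n$ as trivial up front, while your relaxed families $S_{v,u}$ absorb $V(G)$ uniformly into the cover.
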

\begin{proof}
The result is trivial if $k=n$.  For $k<n$, every zero forcing set $S$ must contain some vertex $v$ of positive degree and exactly $d(v)-1$ of its neighbors in order to have a vertex force.  Thus every zero forcing set of size $k$ can be constructed by first including a vertex $v$, then including exactly $d(v)-1$ of its neighbors, then arbitrarily including $k-d(v)$ additional vertices.  In total the number of ways to construct such a set is
\[\sum_{v\in V(G)} {d(v)\choose d(v)-1}{n-d(v)\choose k-d(v)}=\sum_{v\in V(G)} d(v){n-d(v)\choose k-d(v)},\]
so $G$ has at most this many zero forcing sets of size $k$.
\end{proof}
We next need the following lower bound on $z(P_n;k)$.
\begin{lem}\label{lem:pathCount}
For all non-negative integers $k$ we have
\[z(P_n;k)\ge \f{k^2}{n+k^2}{n\choose k}.\]
\end{lem}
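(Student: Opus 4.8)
The plan is to start from the exact formula \eqref{eqn:zfs(path)}, namely $z(P_n;k)=\binom{n}{k}-\binom{n-k-1}{k}$, and to recast the claimed bound as an estimate on the ratio of the two binomial coefficients. We may assume $k\ge 1$ and $n\ge k$, since otherwise both sides of $z(P_n;k)\ge \frac{k^2}{n+k^2}\binom{n}{k}$ vanish. Under these assumptions $\binom{n}{k}>0$, so dividing through by $\binom{n}{k}$ reduces the statement to showing
\[\frac{\binom{n-k-1}{k}}{\binom{n}{k}}\le 1-\frac{k^2}{n+k^2}=\frac{n}{n+k^2}.\]

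Next I would dispose of the remaining degenerate range. If $k\le n\le 2k$, then the upper index satisfies $n-k-1<k$, so $\binom{n-k-1}{k}=0$ and the displayed inequality holds trivially. Thus the only case requiring genuine work is $n\ge 2k+1$, in which every factor appearing below is a positive integer. Here I would expand the ratio as a product of $k$ terms,
\[\frac{\binom{n-k-1}{k}}{\binom{n}{k}}=\prod_{i=0}^{k-1}\frac{n-k-1-i}{n-i}=\prod_{i=0}^{k-1}\left(1-\frac{k+1}{n-i}\right),\]
matching the $k$ falling-factorial factors in each binomial coefficient.

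To finish, I would apply the two elementary estimates already recorded in the paper. Using $1-x\le e^{-x}$ from \eqref{ineq:expvslin} on each factor, together with $n-i\le n$ to bound each denominator, gives
\[\prod_{i=0}^{k-1}\left(1-\frac{k+1}{n-i}\right)\le\exp\left(-\sum_{i=0}^{k-1}\frac{k+1}{n-i}\right)\le\exp\left(-\frac{k(k+1)}{n}\right)\le e^{-k^2/n}.\]
Then invoking $e^x\ge 1+x$ in the form $e^{k^2/n}\ge 1+k^2/n$ yields $e^{-k^2/n}\le (1+k^2/n)^{-1}=n/(n+k^2)$, which is exactly the required bound; rearranging recovers $z(P_n;k)\ge \frac{k^2}{n+k^2}\binom{n}{k}$. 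I do not anticipate any serious obstacle: the argument is a short manipulation of the closed form combined with the two standard exponential inequalities. The only points demanding care are the bookkeeping of the degenerate ranges of $n$ relative to $k$ and verifying that the telescoping product is indexed correctly, both of which are routine.
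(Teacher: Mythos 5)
Your proposal is correct and follows essentially the same route as the paper: both start from the exact formula $z(P_n;k)=\binom{n}{k}-\binom{n-k-1}{k}$, write the ratio of binomial coefficients as the product $\prod_{i=0}^{k-1}\bigl(1-\frac{k+1}{n-i}\bigr)$, and bound it above by $\frac{n}{n+k^2}$. The only difference is cosmetic --- you close with $1-x\le e^{-x}$ and $e^{x}\ge 1+x$ where the paper uses $\bigl(1-\frac{k}{n}\bigr)^k\le\frac{1}{1+k^2/n}$ via the Bernoulli inequality --- and your explicit handling of the degenerate ranges $k\le n\le 2k$ is a small bonus in care over the paper's treatment.
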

\begin{proof}
Recall that (\ref{eqn:zfs(path)}) states $z(P_n;k)={n\choose k}-{n-k-1\choose k}$ for all $k$. Observe that
\[
\begin{split}
    \frac{{n-k-1\choose k}}{{n\choose k}}=&\f{(n-k-1)(n-k-2)\cdots (n-2k)}{n
    (n-1)\cdots (n-k+1)}
    =\prod_{i=0}^{k-1} \left(1 - \f{k+1}{n - i} \right)\\
    \le &\left(1-\f{k}{n}\right)^k\le \f{1}{1+k^2/n}
    =\f{n}{n+k^2},\end{split}
\]
where this last inequality follows from the Bernoulli inequality; see e.g.,~\cite[Equation $(r'_5)$]{li2013some}.   This implies
\[z(P_n;k)={n\choose k}-{n-k-1\choose k}\ge \left(1-\f{n}{n+k^2}\right){n\choose k}=\f{k^2}{n+k^2}{n\choose k}.\qedhere
\]
\end{proof}

We now prove Proposition~\ref{prop:degree}, which we restate below.

\begin{propn}[\ref*{prop:degree}]
Let $G$ be an $n$-vertex graph with minimum degree $\del\ge 3$ and $k\le (2\del)^{-1/\del}n^{1-1/\del}$. Then $z(G;k)\le z(P_n;k)$.
\end{propn}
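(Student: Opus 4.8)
The plan is to combine the upper bound of Lemma~\ref{lem:degreeCount} with the lower bound of Lemma~\ref{lem:pathCount} and reduce everything to a single elementary inequality. First I would dispose of the trivial range: since $G$ has minimum degree at least $\delta$ we have $Z(G)\ge\delta$, so $z(G;k)=0$ whenever $k<\delta$ and the claim is immediate there; thus assume $\delta\le k\le n$. By Lemma~\ref{lem:degreeCount}, $z(G;k)\le\sum_{v}d(v)\binom{n-d(v)}{k-d(v)}$, and I would show that $\phi(d)=d\binom{n-d}{k-d}$ is non-increasing for $d\ge\delta$: the ratio $\phi(d+1)/\phi(d)=\frac{(d+1)(k-d)}{d(n-d)}$ is at most $1$ precisely when $k\le\frac{d(n+1)}{d+1}$, and since $k\le(2\delta)^{-1/\delta}n^{1-1/\delta}\le\frac{\delta n}{\delta+1}$ this holds for every $d\ge\delta$. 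As $G$ has $n$ vertices, each of degree at least $\delta$, this yields $z(G;k)\le n\delta\binom{n-\delta}{k-\delta}$.

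Next I would feed in the path bound. Using the identity $\binom{n-\delta}{k-\delta}=\binom nk\binom k\delta/\binom n\delta$ together with Lemma~\ref{lem:pathCount}, the desired inequality $z(G;k)\le z(P_n;k)$ follows once we show $n\delta\binom k\delta/\binom n\delta\le\frac{k^2}{n+k^2}$. Writing $\binom mj=\frac mj\binom{m-1}{j-1}$ to cancel a factor of $k/n$, this is equivalent to
\[\delta(n+k^2)\binom{k-1}{\delta-1}\le k\binom{n-1}{\delta-1}.\]
The hypothesis on $k$ is equivalent to $2\delta k\le(n/k)^{\delta-1}$, and since every factor of $\frac{\binom{n-1}{\delta-1}}{\binom{k-1}{\delta-1}}=\prod_{i=1}^{\delta-1}\frac{n-i}{k-i}$ is at least $n/k$, I obtain the key estimate $\frac{\binom{n-1}{\delta-1}}{\binom{k-1}{\delta-1}}\ge(n/k)^{\delta-1}\ge2\delta k$.

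With this in hand I would split into two regimes. If $k^2\ge n$, then $k\binom{n-1}{\delta-1}\ge2\delta k^2\binom{k-1}{\delta-1}\ge\delta(n+k^2)\binom{k-1}{\delta-1}$ and we are done. If instead $k^2<n$, then $\delta\le k<\sqrt n$ forces $n>\delta^2$, and (using $n+k^2<2n$) it suffices to show $\prod_{i=1}^{\delta-1}\frac{n-i}{k-i}\ge\frac{2\delta n}{k}$. For $\delta\ge4$ there are at least two factors to spare beyond the $2\delta k$ already extracted, and $(n/k)^{\delta-2}\ge(n/k)^2>n>2\delta$ closes the gap; for $\delta=3$ the product is exactly $\frac{(n-1)(n-2)}{(k-1)(k-2)}$, and the bound reduces to the monotone comparison $\frac{(n-1)(n-2)}{n}\ge\frac{6(k-1)(k-2)}{k}$, which holds throughout $\delta\le k<\sqrt n$.

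The main obstacle is this final elementary inequality. The delicate point is that the crude estimate $\binom k\delta/\binom n\delta\le(k/n)^\delta$ is far too lossy when $k$ is close to $\delta$ (there the true ratio is smaller by a factor of order $e^{\delta}$), so one cannot simply invoke the calculus estimate of Observation~\ref{obs:calculus} as in the proof of Theorem~\ref{thm:degree}; the product $\prod_{i=1}^{\delta-1}\frac{n-i}{k-i}$ must be kept essentially intact. The hypothesis $\delta\ge3$ is exactly what guarantees that this product has at least two factors, which is what makes the small-$k$ regime close, and the constant $(2\delta)^{-1/\delta}$ is calibrated so that $2\delta k\le(n/k)^{\delta-1}$; by contrast the method breaks for $\delta\le2$, where the prefactor $n\delta$ coming from Lemma~\ref{lem:degreeCount} is already too large to beat $z(P_n;k)$.
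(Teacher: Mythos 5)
Your proof is correct, and I verified each step: the monotonicity of $d\mapsto d\binom{n-d}{k-d}$ for $d\ge\del$ (your check that the hypothesis implies $k\le \del n/(\del+1)$ amounts to $(1+1/\del)^{\del}\le 2\del n$, which holds since the left side is below $e$), the reduction via $\binom{n-\del}{k-\del}=\binom{n}{k}\binom{k}{\del}/\binom{n}{\del}$ to $\del(n+k^2)\binom{k-1}{\del-1}\le k\binom{n-1}{\del-1}$, the equivalence of the hypothesis with $2\del k\le (n/k)^{\del-1}$, and both regimes, including the $\del=3$ inequality $(n-1)(n-2)/n\ge 6(k-1)(k-2)/k$, which indeed holds throughout $3\le k<\sqrt{n}$ (as you note, $n>\del^2$ there, so $n\ge 10$). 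Structurally you follow the paper's skeleton --- Lemma~\ref{lem:degreeCount} above, Lemma~\ref{lem:pathCount} below, a split according to whether $k^2\ge n$ or $k^2<n$, with the hypothesis used only in the large regime --- but your intermediate estimates genuinely differ: the paper bounds $\binom{n-t}{k-t}\le (k/n)^t\binom{n}{k}$ and applies Observation~\ref{obs:calculus} to get $z(G;k)\le \del n(k/n)^{\del}\binom{n}{k}$, whereas you keep the binomial ratios intact via a discrete ratio test. Your closing commentary slightly mischaracterizes this contrast: it is not true that one ``cannot'' invoke Observation~\ref{obs:calculus} --- the paper does exactly that, absorbing the lossiness near $k=\del$ in its $k\le\sqrt{n}$ case. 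What your sharper bookkeeping actually buys is robustness in the constants: the paper's small-$k$ case reduces to $n\ge (2\del)^{2/(\del-2)}$, which for $\del=3$ demands $n\ge 36$, so its parenthetical claim that this holds whenever $n\ge 5$ fails for $\del=3$ and $5\le n\le 35$ (e.g.\ $(n,\del,k)=(16,3,3)$ satisfies the hypothesis but not the paper's sufficient condition $n/k\ge 6$), while your $\del=3$ computation covers every admissible $n$; the price is the extra case split $\del=3$ versus $\del\ge 4$ and the monotonicity verification. You also never need the paper's preliminary reduction to $k\le n/2$, since your large regime handles all $k$ up to the hypothesis bound directly.
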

\begin{proof}
By (\ref{eqn:zfs(path)}), for $k\ge n/2$ we have $z(P_n;k)={n\choose k}$. 
Thus we may assume throughout that $k\le n/2$.

Observe that for all $t$,
\[{n-t\choose k-t}/{n\choose k}=\f{k(k-1)\cdots (k-t+1)}{n(n-1)\cdots (n-t+1)}\le (k/n)^t,\]
with this last step using that  $(k-i)/(n-i)\le k/n$ for $i\ge 1$ if and only if $k\le n$.   Using this and Lemma~\ref{lem:degreeCount}  gives
\[z(G;k)\le \sum_v d(v)(k/n)^{d(v)}{n\choose k}.\]
Because $\del\ge 3$, we have $k\le e^{-1/\del} n$, so by Observation~\ref{obs:calculus} we have
\begin{equation}z(G;k)\le \del n (k/n)^\del {n\choose k}.\label{eq:whatever}\end{equation}

First consider the case $k\le \sqrt{n}$.  By \eqref{eq:whatever} and Lemma~\ref{lem:pathCount}, to prove $z(G;k)\le z(P_n;k)$, it suffices to have $\del n (k/n)^\del\le k^2/2n$, or equivalently $n/k\ge (2\del)^{1/(\delta-2)}$. Since $k\le \sqrt{n}$, it suffices to prove $n\ge (2\del)^{2/(\del-2)}$, and this is true for $3\le \del \le n$ and $n\ge 5$.
Thus we may assume $k\ge \sqrt{n}$.  In this case \eqref{eq:whatever} and Lemma~\ref{lem:pathCount} imply $z(G;k)\le z(P_n;k)$ provided
\[\del n(k/n)^{\del}\le \half,\]
and this will hold precisely when $k\le (2\del)^{-1/\del}n^{1-1/\del}$.
\end{proof}


With Proposition~\ref{prop:degree} we can prove Corollary~\ref{cor:degree}, which we restate below.

\begin{corn}[\ref*{cor:degree}]
Let $G$ be an $n$-vertex graph with minimum degree $\del\ge \log_2(n)+2\log_2\log_2(n)$.  Then  $z(G;k)\le z(P_n;k)$ for all $k$.
\end{corn}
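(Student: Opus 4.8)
The plan is to split the range of $k$ into two regimes and dispatch each separately, using Proposition~\ref{prop:degree} for the small values of $k$ and a trivial counting bound for the large values.

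First I would observe that the large-$k$ regime is essentially free. By~\eqref{eqn:zfs(path)}, when $k>(n-1)/2$ we have $n-k-1<k$ and hence $\binom{n-k-1}{k}=0$, so $z(P_n;k)=\binom{n}{k}$. On the other hand $z(G;k)\le\binom{n}{k}$ for every graph $G$, simply because there are only $\binom{n}{k}$ subsets of $V(G)$ of size $k$. Thus $z(G;k)\le z(P_n;k)$ holds automatically whenever $k>(n-1)/2$; in particular this covers every integer $k\ge\lfloor n/2\rfloor+1$.

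It therefore suffices to handle $k\le\lfloor n/2\rfloor$, and for this I would invoke Proposition~\ref{prop:degree}. That result gives $z(G;k)\le z(P_n;k)$ for all $k\le(2\del)^{-1/\del}n^{1-1/\del}$ (and the hypothesis on $\del$ guarantees $\del\ge 3$ once $n$ is not tiny), so the whole corollary follows once I show that the degree hypothesis forces
\[(2\del)^{-1/\del}n^{1-1/\del}\ge \f{n}{2}.\]
After dividing both sides by $n$ and raising to the power $-\del$ (which reverses the inequality), this is equivalent to the clean statement $2^{\del-1}\ge \del n$, i.e.\ $\del-\log_2\del\ge 1+\log_2 n$.

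The crux is then this last elementary inequality, which I would verify by monotonicity. The function $g(x)=x-\log_2 x$ is increasing for $x\ge 2$, so since $\del\ge \del_0:=\log_2 n+2\log_2\log_2 n\ge 2$ it is enough to check $g(\del_0)\ge 1+\log_2 n$. Writing $L=\log_2 n$, this reduces to $2\log_2 L-\log_2(L+2\log_2 L)\ge 1$, which holds for $L\ge 4$ (that is, $n\ge 16$) using $2\log_2 L\le L$ and hence $\log_2(L+2\log_2 L)\le 1+\log_2 L$. The main obstacle is really just the bookkeeping in this chain of nested logarithmic estimates — there is no conceptual difficulty, but one must track the nested logs carefully and confirm the cutoff $n\ge 16$. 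The finitely many remaining small cases $n\le 15$ can be handled by hand: there the hypothesis forces $\del$ to be nearly $n-1$, so $G$ is complete or nearly complete, where the bound is immediate (for instance when $G=K_n$ one has $z(K_n;k)=\binom{n}{k}$ for $k\in\{n-1,n\}$ and $z(K_n;k)=0$ otherwise, neither of which ever exceeds $z(P_n;k)$).
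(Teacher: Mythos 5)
Your overall strategy is the same as the paper's: dispose of large $k$ via $z(P_n;k)=\binom{n}{k}$, then reduce the remaining range to showing that Proposition~\ref{prop:degree} covers everything up to $n/2$, i.e.\ to the inequality $2\del n\le 2^{\del}$. Your verification of this for $n\ge 16$ is correct, and in fact your cutoff is the honest one: the real-number inequality you need, $2\log_2 L\ge 1+\log_2(L+2\log_2 L)$ with $L=\log_2 n$, first holds at $L=4$, i.e.\ $n=16$ (with equality throughout), whereas the paper asserts the analogous chain already for $n\ge 9$, where its second inequality $(\log_2 n)^2 \ge 2(\log_2 n+2\log_2\log_2 n)$ is actually false for $9\le n\le 15$ and only integrality of $\del$ rescues most of those cases. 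So in the main regime your write-up is, if anything, more careful than the paper's.

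The genuine gap is your treatment of $9\le n\le 15$. The claim that "the hypothesis forces $\del$ to be nearly $n-1$, so $G$ is complete or nearly complete" is false in this window: the hypothesis gives only $\del\ge 7$ for $n\in\{9,10\}$ and $\del\ge 8$ for $11\le n\le 15$, and a $15$-vertex graph of minimum degree $8$, say, is nowhere near complete, so "the bound is immediate" has no justification there. (The complete-graph argument is valid only for $n\le 8$, where $\lceil\log_2 n+2\log_2\log_2 n\rceil\ge n-1$ really does force $G=K_n$; that is exactly the paper's small-case step, and it stops at $n=8$.) The gap is fixable by finite checking: for $n\in\{9,11,12,13,14,15\}$ the minimal admissible integer $\del$ already satisfies $2\del n\le 2^{\del}$ (e.g.\ $n=9$, $\del=7$ gives $126\le 128$), so your main argument applies verbatim. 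The one stubborn case is $n=10$, $\del=7$, where $2\cdot 7\cdot 10=140>128$, so Proposition~\ref{prop:degree} covers only $k\le 14^{-1/7}10^{6/7}<5$, i.e.\ $k\le 4$; there you must use the full strength of your own first observation — the trivial regime covers all $k>(n-1)/2$, hence $k\ge 5$ when $n=10$ — rather than the weakened "$k\ge\lfloor n/2\rfloor+1$" you settled for. With that repair the two regimes meet for every $n$ and the corollary follows.
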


\begin{proof}
The result trivially holds for $k\ge n/2$, so it suffices to prove the result for $k\le n/2$.  By Proposition~\ref{prop:degree}, it suffices to show

\[n/2\le (2\del)^{-1/\del}n^{1-1/\del},\]
or equivalently $n\le 2^\del (2\del)^{-1}$.  And indeed, for $n\ge 9$ the minimum degree condition implies
\[2^\del (2\del)^{-1} \ge n\frac{(\log_2(n))^2}{2(\log_2(n)+2\log_2\log_2(n))} \ge n.
\]
For $n\le 8$ one can check that $\lceil \log_2(n)+2 \log_2\log_2(n)\rceil\ge n-1$, so our hypothesis on $\delta$ implies $G$ is complete and the result is immediate.  In either case we conclude the result.
\end{proof}

\section{Bounds on Threshold Probabilities}\label{Sec:path}

In this section we prove that for any $n$-vertex graph $G$, the threshold probability $p(G)$ is asymptotically at least that of $P_n$, i.e.\ $p(G)=\Omega(n^{-1/2})$.  At a high level, our proof revolves around finding a graph $\tilde{G}$ which has minimum degree 2 and $p(\tilde{G})\approx p(G)$.  Because $\tilde{G}$ has minimum degree 2, Theorem~\ref{thm:degree} implies $p(G)\approx p(\tilde{G})=\Omega(n^{-1/2})$.
 We begin with a preliminary result regarding graphs containing pendant paths.

We say that a path $v_1\cdots v_k$ in a graph $G$ is a \textit{pendant path}\footnote{Most authors do not impose any conditions on the degree of $v_k$ in the definition of a pendant path, but this formulation will be more useful to us.} provided $k\ge 2$, $d_G(v_1)=1$, $d_G(v_i)=2$ for $1<i<k$, and $d_G(v_k) > 2$. We refer to the vertex $v_1$, the vertex of degree one, as the \textit{pendant vertex}, and to $v_k$, the vertex of degree at least 3, as the \textit{anchor vertex}. Observe that the only tree that does not contain a pendant path is the path graph.

\begin{lem}\label{w pend paths}
Let $G$ be an $n$-vertex graph.  If there exists a vertex $w\in V(G)$ that is the anchor of two distinct pendant paths in $G$, then $p(G)=\Om(n^{-1/2})$.
\end{lem}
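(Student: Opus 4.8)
The plan is to bound $\Pr[\Bzf{p}{G}]$ from above by a quantity of order $np^2+p$, which lies below $\half$ when $p=cn^{-1/2}$ for a small constant $c$ and $n$ large; since $\Pr[\Bzf{p}{G}]$ is increasing in $p$, this forces $p(G)=\Om(n^{-1/2})$. Write the two pendant paths anchored at $w$ as $x_1\cdots x_a w$ and $y_1\cdots y_b w$, where $x_1,y_1$ are the pendant vertices. Because the two paths are distinct and all of their pendant and internal vertices have degree at most $2$, the tails $X=\{x_1,\ldots,x_a\}$ and $Y=\{y_1,\ldots,y_b\}$ are disjoint, are disjoint from $w$, and the only vertex of $V(G)\sm(X\cup Y)$ adjacent to a tail vertex is $w$, which is adjacent to the ``tops'' $x_a$ and $y_b$.

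First I would pass to the tails by a monotonicity argument. If $B_p(G)$ is a zero forcing set, then so is the larger set obtained by additionally coloring every vertex of $V(G)\sm(X\cup Y)$ blue; in this augmented process $w$ and all of its non-tail neighbors begin blue. Setting $S_X=B_p(G)\cap X$ and $S_Y=B_p(G)\cap Y$, the key structural claim I would establish is that, in this augmented process, the tails become fully blue if and only if $S_X$ is a zero forcing set of the induced path on $X$ or $S_Y$ is a zero forcing set of the induced path on $Y$. The crux is a ``chicken-and-egg'' observation at $w$: since $w$ has a blue non-tail neighbor, $w$ can force $x_a$ only after $y_b$ is blue and can force $y_b$ only after $x_a$ is blue, so neither top can be colored through $w$ until one of them is first reached from within its own tail. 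Moreover, a tail reaches its top by internal forcing exactly when its induced path is zero-forced by its initial blue set, using the standard fact (recalled for $P_n$ in the excerpt) that a blue endpoint or a pair of consecutive blue vertices forces an entire path, and that a top can be colored internally only by creating such a consecutive pair.

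Given the claim, a union bound yields
\[\Pr[\Bzf{p}{G}]\le \Pr[B_p(P_a)\in\zfs(P_a)]+\Pr[B_p(P_b)\in\zfs(P_b)],\]
since $S_X$ and $S_Y$ are distributed as $p$-random subsets of $a$ and $b$ vertices, respectively. Inserting the estimate $\Pr[B_p(P_m)\in\zfs(P_m)]\le (m-1)p^2+2p$ from the proof of Proposition~\ref{prop:p(P_n)} and using $a+b\le n$ gives
\[\Pr[\Bzf{p}{G}]\le (a+b)p^2+4p\le np^2+4p.\]
Taking $p=\tfrac12 n^{-1/2}$ makes the right-hand side at most $\tfrac14+2n^{-1/2}$, which is below $\half$ for all sufficiently large $n$; hence $p(G)>\tfrac12 n^{-1/2}$ for large $n$, i.e.\ $p(G)=\Om(n^{-1/2})$.

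I expect the main obstacle to be the structural claim, namely the correct analysis of the interaction at the high-degree anchor $w$. One must verify carefully that a top vertex can never be colored via $w$ before either top has been reached from inside a tail, and that reaching a top from inside a tail is genuinely equivalent to that tail's induced path being self-zero-forcing (in particular ruling out the possibility that a top is colored while part of its tail remains white). Once this is in hand, the monotonicity reduction, the union bound, and the substitution of the path estimate are all routine.
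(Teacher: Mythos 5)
Your proposal is correct and takes essentially the same route as the paper: both arguments rest on the observation that a zero forcing set must contain an endpoint of one of the two tails or a consecutive pair inside one of them (your disjunction ``$S_X\in\zfs(P_a)$ or $S_Y\in\zfs(P_b)$'' is exactly the paper's event ``$A'$ or some $A_i$''), and both conclude with the identical union bound $\Pr[\Bzf{p}{G}]\le 4p+np^2$, hence $p(G)=\Om(n^{-1/2})$. Your augmentation and chicken-and-egg analysis at $w$ simply supplies, in more detail, the justification the paper compresses into its one-line ``Observe that\ldots'' step.
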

\begin{proof}
Let $w\in V(G)$ and assume that $w$ is the anchor of two distinct pendant paths in $G$, i.e.,\ there exist distinct pendant paths $u_1\cdots u_k w$ and $u_{k+1}\cdots u_\ell w$ in $G$. Let
\[
I = \{j\in \Z: 1< j <k \text{ or } k+1< j < \ell\}
\]
and for each $i\in I$, let $A_i$ be the event that $u_i,u_{i+1}\in B_p(G)$. Let $A'$ be the event that $B_p(G)\cap \{u_1,u_k,u_{k+1},u_\ell\}\ne \emptyset$. Observe that if $B_p(G)\in \zfs(G)$, then  $A'$ or some $A_i$ event occurs. Thus,
\[
\Pr[\Bzf{p}{G}] \leq \Pr\Big[\bigcup_{i\in I} A_i \cup A'\Big] \leq \Pr[A'] + \sum_{i\in I}\Pr[A_i] \leq 4p + np^2.
\]
Thus to have $\Pr[\Bzf{p}{G}] = \half$, we must have $np^2+4p\ge \half$, which implies $p=\Om(n^{-1/2})$.
\end{proof}

With this lemma, we see that when proving Theorem \ref{thm:pathThreshold} we may assume each vertex is the anchor of at most one pendant path. The next lemma allows us to assume that none of these paths are too long (unless $G$ consists of a single path). In order to prove the next lemma, we recall various definitions and notation related to forcing chains which can be found, for example, in \cite{BBF10}.

Let $G$ be a graph and $B\subseteq V(G)$. Using $B$ as the initial set of blue vertices, apply the color change rule and record the forces. If a vertex $v$ forces $u$ we write $v\to u$. The \textit{chronological list of forces} $\mathcal{F}$ is the ordered list of forces, written in the order they were performed, that produces the final coloring of $B$ in $G$. We shall sometimes use $\mathcal{F}$ to denote the \textit{set of forces} that produces the final coloring of $B$ in $G$. A \textit{forcing chain} of $\mathcal{F}$ is a sequence of vertices $(v_1,\ldots,v_k)$ such that $v_i\to v_{i+1}$ for $1\leq i \leq k-1$. A \textit{maximal forcing chain} of $\mathcal{F}$ is a forcing chain that is not a proper subsequence of another forcing chain of $\mathcal{F}$. The \textit{reversal} of $B$ for $\mathcal{F}$ is the set of all vertices that do not perform a force, i.e., the set of all vertices that are the last element in a maximal forcing chain of $\mathcal{F}$.

\begin{lem}\label{add a clique new}
Let $G$ be an $n$-vertex graph and let $\{v_1,\ldots,v_M\}$ denote a set of vertices of degree 1 in $G$. Let $\tilde{G}$ be the graph obtained from $G$ by adding a clique on $\{v_1,\ldots,v_M\}$.  Then
\[
\Pr[\Bzf{p}{G}]\le \Pr[\Bzf{p}{\tilde{G}}] + pM.
\]
\end{lem}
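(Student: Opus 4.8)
The plan is to couple the two processes on a common random set and to show that passing from $G$ to $\tilde G$ can only destroy the zero forcing property when the random set happens to contain one of the $v_i$. Since $G$ and $\tilde G$ have the same vertex set, I would let $B=B_p$ be a single random subset of $V(G)=V(\tilde G)$ formed by including each vertex independently with probability $p$; then $B$ is simultaneously distributed as $B_p(G)$ and as $B_p(\tilde G)$. Writing
\[
\Pr[\Bzf{p}{G}] = \Pr[B\in \zfs(G),\, B\in \zfs(\tilde G)] + \Pr[B\in \zfs(G),\, B\notin \zfs(\tilde G)],
\]
the first term is at most $\Pr[\Bzf{p}{\tilde G}]$, so it suffices to prove
\[
\Pr[B\in \zfs(G),\, B\notin \zfs(\tilde G)] \le pM.
\]

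The heart of the argument is the claim that if $B\cap\{v_1,\ldots,v_M\}=\emptyset$ and $B\in\zfs(G)$, then $B\in\zfs(\tilde G)$. Granting this, the event $\{B\in\zfs(G),\,B\notin\zfs(\tilde G)\}$ is contained in $\bigcup_{i=1}^M\{v_i\in B\}$, and a union bound yields $\Pr[\bigcup_i\{v_i\in B\}]\le \sum_{i=1}^M \Pr[v_i\in B]=pM$, which is exactly the bound needed.

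To prove the claim I would exploit that each $v_i$ has degree $1$ in $G$, with unique neighbor $w_i$. If $v_i\notin B$, then in any chronological list of forces $\mathcal F$ witnessing $B\in\zfs(G)$, the vertex $v_i$ can only become blue by being forced along the edge $w_i v_i$, i.e.\ $w_i\to v_i$; and once $v_i$ is blue its only neighbor $w_i$ is already blue, so $v_i$ never performs a force. Consequently, when $B$ avoids every $v_i$, no $v_i$ acts as a forcer in $\mathcal F$. Since $\tilde G$ is obtained from $G$ by adding edges only among the $v_i$, every vertex $u\notin\{v_1,\ldots,v_M\}$ satisfies $N_{\tilde G}(u)=N_G(u)$. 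I would then run the same sequence $\mathcal F$ in $\tilde G$ and argue by induction on the forces that the blue set after each step agrees with the blue set in $G$: each force $u\to x$ of $\mathcal F$ is performed by some $u\notin\{v_i\}$ whose neighborhood is unchanged, so if the current colorings agree then $x$ is again the unique white neighbor of $u$ in $\tilde G$ and the force remains legal. Hence $\mathcal F$ colors all of $V(\tilde G)$ and $B\in\zfs(\tilde G)$.

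The main obstacle is establishing this last claim cleanly, and the observation that unlocks it is that a degree-one vertex not in $B$ is always an endpoint of a forcing chain (it is forced and never forces). This is what guarantees that the only neighborhoods that change between $G$ and $\tilde G$ are precisely those of vertices that play no active role in the forcing process, so the process transfers verbatim. In the write-up I would take care that $\mathcal F$ is a fixed valid list for $G$ (the argument applies to any such list) and that the induction correctly maintains the invariant that the two blue sets coincide at every step.
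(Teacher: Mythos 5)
Your proof is correct and takes essentially the same approach as the paper's: the paper uses the identical decomposition, bounds $\Pr[B\in\zfs(G),\,B\notin\zfs(\tilde{G})]$ by $pM$ via the same union bound over the events $\{v_i\in B\}$, and establishes the same key claim by observing that the pendant vertices $v_i\notin B$ all lie in the reversal of $B$ (i.e., never perform a force), so the force set for $G$ remains valid in $\tilde{G}$. Your explicit induction on the chronological list of forces merely spells out what the paper handles more briefly via the reversal terminology for forcing chains.
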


\begin{proof}
We begin by showing that if $B\in\zfs(G)$ and $B\notin\zfs(\tilde{G})$, then $v_i\in B$ for some $i = 1,\ldots, M$. Let $B\subseteq V(G)$ and suppose that $v_i\notin B$ for all $i$. Assume that $B\in\zfs(G)$ and let $\mathcal{F}$ be the set of forces for $B$ in $G$. Since each $v_i$ is a pendant vertex in $G$ and each $v_i\notin B$, the set $\{v_1,\ldots,v_M\}$ is contained in the reversal of $B$ for $\mathcal{F}$. This, and the fact that the neighborhood of each vertex in $V(G) \setminus \{v_1,\ldots,v_M\}$ is unchanged by adding a clique to $\{v_1,\ldots,v_M\}$, implies $\mathcal{F}$ is a set of forces for $B$ in $\tilde{G}$. Thus, $B\in\zfs(\tilde{G})$ and hence we have shown that if $B\in\zfs(G)$ and $B\notin\zfs(\tilde{G})$, then $v_i\in B$ for some $i = 1,\ldots, M$.

Let $A_i$ be the event that $v_i\in B_p(G)$. By the preceding argument and the union bound,
\[
\Pr[B_p(G)\in\zfs(G) \text{ and } B_p(G)\notin\zfs(\tilde{G})] \leq \Pr\left[\cup_{i=1}^M A_i\right]
\leq pM.
\]
We also have
\begin{align*}
\Pr[&B_p(G)\in\zfs(G) \text{ and } B_p(G)\notin\zfs(\tilde{G})] \\
=&
\Pr[B_p(G)\in\zfs(G)] - \Pr[B_p(G)\in\zfs(G) \text{ and } B_p(G)\in\zfs(\tilde{G})]\\
\geq& \Pr[B_p(G)\in\zfs(G)] - \Pr[B_p(G)\in\zfs(\tilde{G})].
\end{align*}
Combining both inequalities gives the result.
\end{proof}

The \textit{$2$-core} of a graph $G$, denoted $C_2(G)$, is obtained from $G$ by repeatedly removing all isolated vertices and all vertices of degree 1 from $G$ until no further removals are possible. See \cite{Bickel2010the} for basic facts about $2$-cores.
We say that $T$ is a \textit{pendant tree} of a graph $G$ if $T$ is a maximal induced subgraph of $G$ such that $T$ is a tree, and if there exists a unique vertex $w\in V(T)$ contained in $C_2(G)$. The vertex $w$ is called the \textit{anchor vertex} of $T$. It is known that a vertex $v$ is in $C_2(G)$ if and only if $v$ is contained in a cycle or a path between cycles. Thus, the $2$-core of $G$ can be obtained by removing all non-anchor vertices of each pendant tree  and all components of $G$ that are trees.

Let $G$ be a graph and $B\subseteq V(G)$. We define $C_2(B,G)$ to be the set of vertices that are either contained in $B\cap C_2(G)$ or are anchor vertices of a pendant tree $T$ such that $B\cap V(T)$ is a zero forcing set of $T$.  When $G$ is clear from context we simply write $C_2(B)$. These definitions are illustrated in Figure \ref{fig:C2(G)}. The motivation for these definitions is found in Lemma \ref{without pendants}.

\begin{figure}[h!]
    \centering
    \begin{subfigure}[b]{0.4\textwidth}
         \centering
         \includegraphics[scale=0.4]{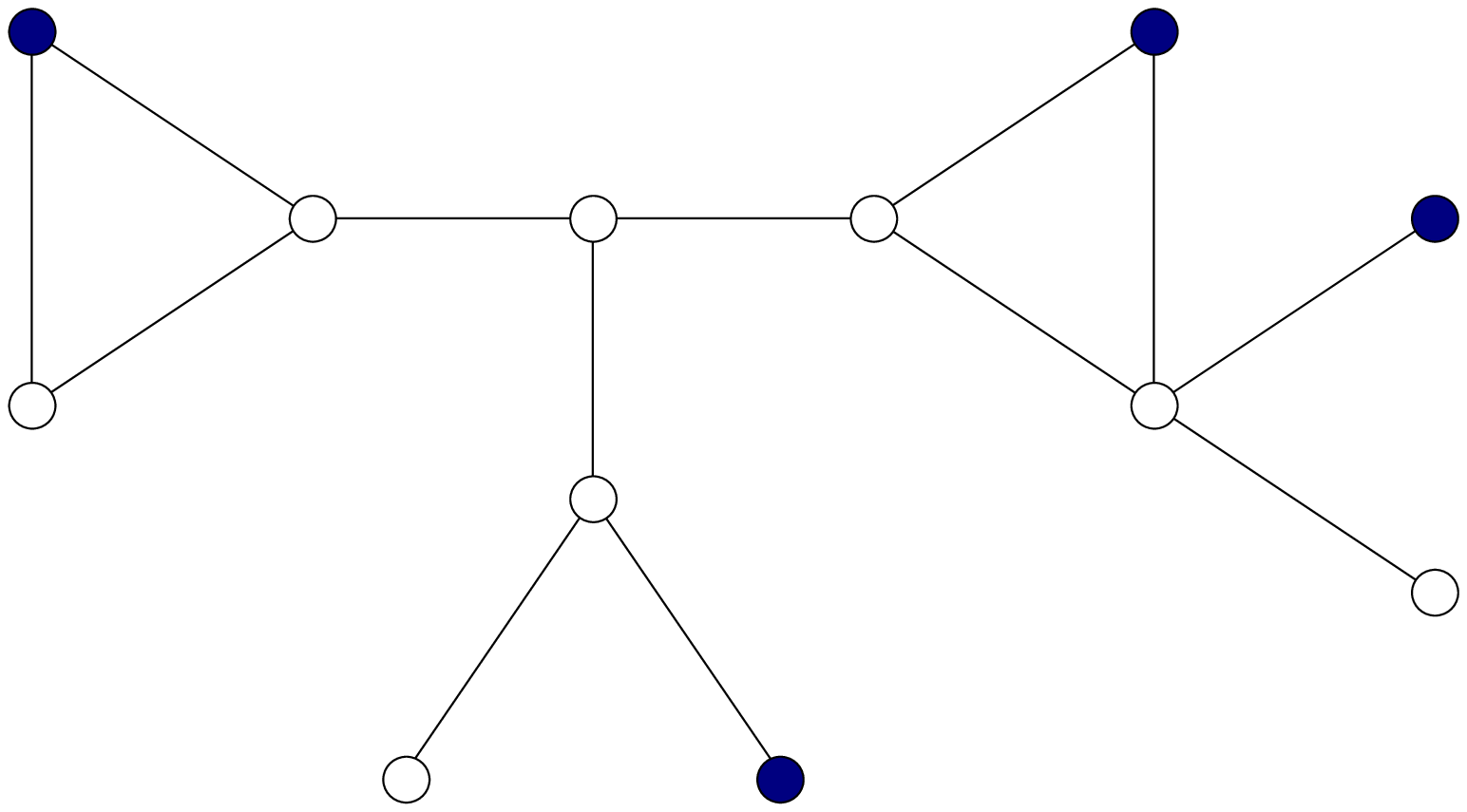}
         \caption{}
         \label{fig:G_B}
     \end{subfigure}
        \hfil\hfil\hfil
     \begin{subfigure}[b]{0.4\textwidth}
         \centering
         \includegraphics[scale=0.4]{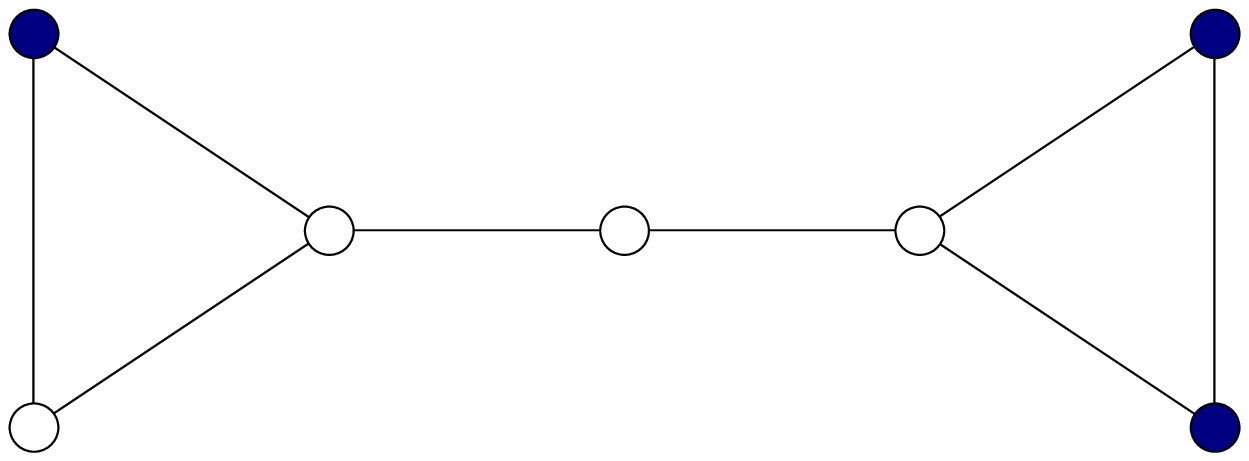}
         \vspace{46pt}
         \caption{}
         \label{fig:G>2_B>2}
     \end{subfigure}
    \caption{(a) Graph $G$ with zero forcing set $B$ colored blue. (b) Graph $C_2(G)$ with zero forcing set $C_2(B)$ colored blue.}
    \label{fig:C2(G)}
\end{figure}

Before proving our next lemma, we note the following observation about zero forcing on graphs with a cut vertex. Observation \ref{cut vertex obs} follows from some of the concepts introduced in \cite{D12}. We write $G[S]$ to denote the induced subgraph of the graph $G$ on $S\subseteq V(G)$.
\begin{obs}\label{cut vertex obs}
Let $G$ be a graph with cut vertex $w$, and let $W_1\cup W_2\cup \{w\}$ be a partition of $V(G)$ such that $W_1$ and $W_2$ are the disjoint union of connected components of $G-w$. Let $G_i = G \big[V(W_i) \cup\{w\}\big]$ for $i = 1,2$ . Then $B\cap V(G_i)\in \zfs(G_i)$ for some index $i$, and $B\cap V(G_i)\cup\{w\}$ is a zero forcing set of $G_i$ for each $i=1,2$.
\end{obs}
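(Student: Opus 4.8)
The plan is to exploit the structural fact that, for every vertex $v\in W_i$, its neighborhood is identical in $G$ and in $G_i$ (since the cut vertex $w$ separates $W_1$ from $W_2$), and that $w$ is the only vertex whose neighborhood changes when passing from $G$ to $G_i$. This lets me transplant individual color-change forces between $G$ and $G_i$. Throughout I assume $B\in\zfs(G)$, and I fix a chronological list of forces $\mathcal{F}$ for $B$ in $G$.

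First I would prove the second assertion. Fix $i$ and consider the subsequence of $\mathcal{F}$ consisting of exactly those forces $a\to b$ with $b\in W_i$. I claim that, performed in the same order, these are all legal forces in $G_i$ starting from $(B\cap V(G_i))\cup\{w\}$, and that together they color all of $V(G_i)$. The verification is by induction along this subsequence: when a force $a\to b$ is to be replicated, one checks that (i) $a$ is already blue in $G_i$---indeed $a\in N(b)\subseteq V(G_i)$, and $a$ is either in $B$, equal to $w$ (blue from the start), or the target of an earlier $W_i$-force already replicated---and (ii) $b$ is the unique white neighbor of $a$ in $G_i$. For (ii) one uses $N_{G_i}(a)\subseteq N_G(a)$ together with the fact that in $G$ all neighbors of $a$ other than $b$ are blue at the time of the force; any such neighbor lying in $W_i$ either lies in $B$ or was colored by an earlier $W_i$-force, and in both cases is blue in $G_i$, while $w$ is blue from the start and $b$ itself is still white. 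Since $B\in\zfs(G)$, every vertex of $W_i$ is eventually colored in $G$, and every vertex of $W_i$ not in $B$ is the target of some $W_i$-force; thus replicating all such forces colors all of $W_i$, and together with $w$ this colors $V(G_i)$. Hence $(B\cap V(G_i))\cup\{w\}\in\zfs(G_i)$.

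For the first assertion I would split on whether $w\in B$. If $w\in B$, then $(B\cap V(G_i))\cup\{w\}=B\cap V(G_i)$, so the second assertion already gives $B\cap V(G_i)\in\zfs(G_i)$ for both $i$. If $w\notin B$, then $w$ is white initially and is colored in $G$ by some force $x\to w$; let $i$ be the index with $x\in W_i$. I would replicate, in $G_i$ starting from $B\cap V(G_i)$, exactly those $W_i$-target forces of $\mathcal{F}$ that occur strictly before $x\to w$. Because $w$ is white in $G$ until the force $x\to w$, every such earlier $W_i$-force has its source in $W_i$ (the white vertex $w$ cannot force), and the same induction as above shows these forces are legal in $G_i$ and leave $w$ white. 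At the corresponding moment $x$ is blue in $G_i$ and every neighbor of $x$ other than $w$ is blue, so the force $x\to w$ is legal in $G_i$; performing it colors $w$. The blue set of $G_i$ now contains $(B\cap V(G_i))\cup\{w\}$, which is a zero forcing set of $G_i$ by the second assertion, so by monotonicity the process colors all of $V(G_i)$. Therefore $B\cap V(G_i)\in\zfs(G_i)$.

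The main obstacle is the bookkeeping in the force-transplantation argument: one must argue carefully that restricting $\mathcal{F}$ to $W_i$-target forces and replaying them in order keeps every force legal in $G_i$, which requires tracking which vertices of $W_i\cup\{w\}$ are blue in $G_i$ against their status in $G$ at the relevant times. The essential simplification making this go through is that no vertex of $W_i$ is ever adjacent to a vertex of $W_{3-i}$, so the only coupling between the two sides is through $w$; this is exactly why freezing $w$ to blue (second assertion) or reconstructing the single force that colors $w$ (first assertion) suffices.
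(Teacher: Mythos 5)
Your proof is correct, and it is more self-contained than what the paper provides: the paper states this observation without proof, remarking only that it ``follows from some of the concepts introduced in~\cite{D12}'' (Row's cut-vertex techniques for zero forcing). Your force-transplantation argument supplies the missing details directly, and it does so in exactly the idiom the paper itself uses elsewhere (chronological lists of forces and their restriction, as in the proofs of Lemmas~\ref{add a clique new} and~\ref{without pendants}). The two load-bearing points are handled properly: for assertion (b), since every neighbor of a vertex $b\in W_i$ lies in $W_i\cup\{w\}$, the subsequence of $W_i$-target forces stays inside $G_i$, and your check that each non-target neighbor of the source is already blue in the replicated process (in $B$, equal to $w$, or the target of an earlier replicated force) is exactly what legality in $G_i$ requires, including the case where the source is $w$ itself, where $N_{G_i}(w)\subseteq N_G(w)$ makes the force only easier; for assertion (a), your observation that no force preceding $x\to w$ can have source $w$ (a white vertex cannot force), so in fact no such source is even adjacent to $w$, is the key fact that lets the pre-$x\to w$ forces replay in $G_i$ from $B\cap V(G_i)$ alone, after which monotonicity of the closure finishes the argument. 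You also correctly made explicit the hypothesis $B\in\zfs(G)$, which the paper's statement leaves implicit. The only thing the citation-based route buys is brevity; your argument trades that for a complete verification with no external machinery.
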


\begin{lem}\label{without pendants}
Let $G$ be a graph and $B\subseteq V(G)$. If $B$ is a zero forcing set of $G$, then $C_2(B)$ is a zero forcing set of $C_2(G)$.
\end{lem}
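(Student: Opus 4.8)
The plan is to take a forcing process witnessing $B\in\zfs(G)$ and project it onto $C_2(G)$, keeping only those forces whose two endpoints both lie in $C_2(G)$, and to show that these projected forces drive $C_2(B)$ to color all of $C_2(G)$. First I would record the structural facts about pendant trees that make this projection sensible. If $T$ is a pendant tree with anchor $w$, then every vertex of $V(T)\setminus\{w\}$ has all of its $G$-neighbors inside $T$ (otherwise $T$ would be attached to $C_2(G)$ at a second vertex, putting some of its vertices on a cycle and hence into the $2$-core). Consequently, the only way a vertex outside $C_2(G)$ can force a vertex of $C_2(G)$ is for a non-anchor vertex of some $T$ to force its own anchor $w$; every other force landing on a $2$-core vertex is performed by a $2$-core vertex. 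This is exactly what lets me restrict attention to the forces $\mathcal{F}'=\{v\to u : v,u\in C_2(G)\}$.

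Next I would fix a chronological list of forces $\mathcal{F}$ for $B$ in $G$ and run the forces of $\mathcal{F}'$ on $C_2(G)$ starting from $C_2(B)$, in the order inherited from $\mathcal{F}$. I would maintain the invariant that at every stage, each $2$-core vertex that is blue in the $G$-process is also blue in this projected $C_2(G)$-process. The base case holds because $B\cap C_2(G)\subseteq C_2(B)$. For the inductive step, a force $v\to u$ of $\mathcal{F}$ with $u\notin C_2(G)$ changes nothing among $2$-core vertices, while a force with $u,v\in C_2(G)$ is legitimate in $C_2(G)$: the $C_2(G)$-neighbors of $v$ form a subset of its $G$-neighbors, so by the invariant $u$ is still the unique white $C_2(G)$-neighbor of $v$ when we reach this force. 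Since $B\in\zfs(G)$ colors all of $G$, at the end every $2$-core vertex is blue in the $G$-process and hence, by the invariant, in the projected process, giving $C_2(B)\in\zfs(C_2(G))$.

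The one step that is not routine, and which I expect to be the main obstacle, is the remaining case of the inductive step: a $2$-core vertex $u=w$ that is an anchor and is forced from inside its pendant tree $T_w$. Here no force of $\mathcal{F}'$ colors $w$, so I must show $w$ is already in $C_2(B)$, i.e. that $B\cap V(T_w)\in\zfs(T_w)$. I would prove this by restricting $\mathcal{F}$ to the forces $\mathcal{F}_1$ with both endpoints in $V(T_w)$ and checking that $\mathcal{F}_1$ is a valid, complete forcing process for $B\cap V(T_w)$ on $T_w$. Completeness holds because every vertex of $V(T_w)\setminus\{w\}$ is forced by a neighbor (necessarily inside $T_w$) and $w$ itself is forced from inside by hypothesis. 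Validity is the delicate point: for forces performed by non-anchor vertices it is inherited verbatim from $G$, since their neighborhoods are unchanged; for a force performed by $w$ it is in fact \emph{easier} to satisfy in $T_w$ than in $G$, because in $T_w$ the vertex $w$ has fewer neighbors (its $2$-core neighbors disappear), so the unique-white-neighbor condition in $G$ implies the corresponding condition in $T_w$. This asymmetry is precisely what makes ``forced from inside'' propagate to $B\cap V(T_w)\in\zfs(T_w)$. (Alternatively, this case can be extracted from the cut-vertex dichotomy of Observation~\ref{cut vertex obs} applied with cut vertex $w$.)

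Finally I would dispatch the degenerate cases: when $G$ is a forest, $C_2(G)$ is empty and the statement is vacuous, and tree components of $G$ contribute nothing to $C_2(G)$, so the argument above covers all graphs. I expect the whole write-up to be short once the two structural observations of the first paragraph and the ``validity is easier inside $T_w$'' point of the third paragraph are stated cleanly.
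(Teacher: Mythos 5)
Your proof is correct, but it takes a genuinely different route from the paper's. The paper argues by minimal counterexample: it picks a pendant tree $T$ with anchor $w$, passes to the strictly smaller graph $G_T$ induced on $(V(G)\setminus V(T))\cup\{w\}$, uses the cut-vertex dichotomy of Observation~\ref{cut vertex obs} to show that $B_T$ (namely $B\cap V(G_T)$, augmented by $w$ exactly when $B\cap V(T)\in\zfs(T)$) is a zero forcing set of $G_T$, and then invokes minimality together with the identities $C_2(G_T)=C_2(G)$ and $C_2(B_T,G_T)=C_2(B,G)$. You instead work in one shot: you project a chronological list of forces onto $C_2(G)$ and maintain the invariant that blue $2$-core vertices in the $G$-process are blue in the projected process. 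Your case analysis is exhaustive, since your structural fact correctly shows the only forces from outside the $2$-core into it are forces of an anchor $w$ from inside its pendant tree $T_w$, and you handle these correctly by restricting the force list to $T_w$ and noting that validity only gets easier there (non-anchor vertices keep their full neighborhoods, while $w$ loses its $2$-core neighbors), so that $B\cap V(T_w)\in\zfs(T_w)$ and hence $w\in C_2(B)$ from the start. What each approach buys: yours is constructive and self-contained, producing an explicit forcing process for $C_2(B)$ on $C_2(G)$ and making transparent exactly where the definition of $C_2(B)$ is used, while the paper's induction is shorter on the page but delegates the process-level reasoning to Observation~\ref{cut vertex obs} and to the two ``not difficult to check'' identities above. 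Two small repairs for your write-up: (i) your parenthetical justification of the structural fact only covers an outside neighbor lying in $C_2(G)$ (which puts the vertex on a path between $2$-core vertices, hence in the $2$-core); when the outside neighbor is not in $C_2(G)$ you should instead invoke maximality of $T$, since adjoining that neighbor yields a larger induced tree with the same unique $2$-core vertex (a second adjacency into $T$ would create a cycle through non-core vertices, a contradiction); (ii) in the main inductive step the target of a projected force may already be blue in the projected process (e.g., an anchor in $C_2(B)$ that is forced later in $G$ by a $2$-core neighbor), in which case the projected force is redundant rather than valid as stated --- harmless, but worth a sentence.
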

\begin{proof}

Assume for contradiction that there exists a pair $(B,G)$ such that $B$ is a zero forcing set of $G$ and $C_2(B)$ is not a zero forcing set of $C_2(G)$.  Moreover, choose a minimal counterexample $(B,G)$ such that $G$ has as few vertices as possible.

We begin with a few observations to simplify the proof. The 2-core of $G$ is the disjoint union of the 2-cores of each connected component of $G$. If $C_2(G)$ is the null graph, then it is vacuously true that $C_2(B)$ is a zero forcing set of $C_2(G)$. If $C_2(G) = G$, then $C_2(B) = B$ and hence $C_2(B)$ is a zero forcing set of $C_2(G)$. We may therefore assume that $G$ is connected and that $G$ contains a pendant tree.

Let $T$ be a pendant tree of $G$ with anchor vertex $w$, and let $G_T$ be the induced subgraph of $G$ on $(V(G)\setminus V(T))\cup\{w\}$. Let
\[
B_T =
\begin{cases}
B \cap V(G_T) & \text{if } B\cap V(T)\notin \zfs(T)\\
(B \cap V(G_T))\cup \{w\} & \text{if } B\cap V(T)\in \zfs(T).
\end{cases}
\]
Since $w$ is a cut vertex, Observation \ref{cut vertex obs} implies that $B_T$ is a zero forcing set of $G_T$.  By our assumption of $(B,G)$ being a vertex minimal counterexample, we have that $C_2(B_T,G_T)$ is a zero forcing set of $C_2(G_T)$ since $G_T$ has strictly fewer vertices than $G$.  It is not difficult to check that $C_2(G_T) = C_2(G)$ and $C_2(B_T,G_T) = C_2(B,G)$, so $C_2(B,G)$ is a zero forcing set of $C_2(G)$.  This gives a contradiction, proving the result.
\end{proof}

We can now prove the main result of this section, which we restate below.
\begin{thmn}[\ref*{thm:pathThreshold}]
If $G$ is an $n$-vertex graph, then
\[\thresh{G}=\Om(\thresh{P_n})=\Om(n^{-1/2}).\]
\end{thmn}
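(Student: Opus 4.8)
The plan is to produce an absolute constant $c>0$ so that, for all large $n$ and every $n$-vertex graph $G$, we have $\Pr[\Bzf{p}{G}]<\half$ whenever $p\le cn^{-1/2}$. Because $p\mapsto\Pr[\Bzf{p}{G}]$ is nondecreasing, this yields $\thresh{G}\ge cn^{-1/2}$, and since $\thresh{P_n}=\Theta(n^{-1/2})$ by Proposition~\ref{prop:p(P_n)} it gives $\thresh{G}=\Om(\thresh{P_n})=\Om(n^{-1/2})$. First I would make three harmless reductions. By Observation~\ref{obs: disjoint union}, $\Pr[\Bzf{p}{G}]$ is at most the corresponding probability for any single connected component (and an isolated vertex contributes a factor of at most $p$), so it suffices to treat connected $G$. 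If some vertex anchors two distinct pendant paths we are done by Lemma~\ref{w pend paths}, so assume not; a short count shows that any tree other than $P_n$ has more leaves than vertices of degree at least $3$, and since each leaf determines a pendant path anchored at a degree-$\ge 3$ vertex, such a tree would have a vertex anchoring two pendant paths. Hence the only tree to consider is $P_n$, covered by Proposition~\ref{prop:p(P_n)}, and I may assume $G$ is connected, contains a cycle (so $C_2(G)\ne\emptyset$), and has every pendant tree equal to a single pendant path anchored at a distinct vertex.

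Next I would eliminate long pendant paths. Put $L=\lceil n^{1/2}\rceil$ and let $m$ be the number of pendant paths of length exceeding $L$; as these are vertex-disjoint, $m\le n/L\le n^{1/2}$. Applying Lemma~\ref{add a clique new} to the $m$ pendant (degree-one) endpoints of these paths yields a graph $\tilde G$ with $\Pr[\Bzf{p}{G}]\le \Pr[\Bzf{p}{\tilde G}]+pm\le \Pr[\Bzf{p}{\tilde G}]+pn^{1/2}$. Adding the clique puts each formerly long path onto a path joining two cycles of $\tilde G$, so its vertices all enter the $2$-core; thus every pendant tree of $\tilde G$ is now a pendant path of length at most $L$. (The degenerate case $m\le 1$ is treated separately: if $G$ has a single pendant path one may apply Theorem~\ref{thm:degreeLower} with $d=2$ and $N=1$ directly, and otherwise one pairs the long endpoint with another pendant vertex in the clique.)

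Now I would descend to the $2$-core. By Lemma~\ref{without pendants}, $\Pr[\Bzf{p}{\tilde G}]\le \Pr[C_2(B_p(\tilde G))\in\zfs(C_2(\tilde G))]$, and $C_2(\tilde G)$ has minimum degree at least $2$. The crucial point is that each vertex of $C_2(\tilde G)$ lies in $C_2(B_p(\tilde G))$ independently: a non-anchor with probability $p$, and an anchor $v$ of a pendant path $T_v$ of length $k_v\le L$ with probability $q_v=\Pr[\Bzf{p}{T_v}]\le 2p+k_vp^2\le 4p$, the event $v\in B_p(\tilde G)$ being subsumed since $v$ is an endpoint of $T_v$. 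Independence holds because distinct pendant paths and the remaining $2$-core vertices occupy disjoint vertex sets. As every marginal is at most $4p$ and being a zero forcing set is upward-closed, $C_2(B_p(\tilde G))$ is stochastically dominated by $B_{4p}(C_2(\tilde G))$, so Theorem~\ref{thm:degree} with $\del=2$ gives
\[
\Pr[C_2(B_p(\tilde G))\in\zfs(C_2(\tilde G))]\le \Pr[\Bzf{4p}{C_2(\tilde G)}]\le 2\,|C_2(\tilde G)|\,(4p)^2\le 32np^2.
\]
Combining with the previous step, $\Pr[\Bzf{p}{G}]\le 32np^2+pn^{1/2}=32c^2+c$, which is below $\half$ once $c$ is a sufficiently small absolute constant; monotonicity then extends this to all $p\le cn^{-1/2}$.

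The step I expect to be the main obstacle is the structural claim powering the domination, namely that $C_2(B_p(\tilde G))$ is genuinely a product measure on $V(C_2(\tilde G))$ with the stated marginals. This relies on distinct pendant paths being vertex-disjoint, on an anchor's membership depending only on the coin flips within its own pendant path, and on the bound $q_v\le 4p$, which in turn uses precisely the length cap $k_v\le L$ arranged in the second step. The only remaining work is organizational bookkeeping—the degenerate cases of at most one long pendant path and of disconnected or forest-like $G$—which is routine once the central estimate is in hand.
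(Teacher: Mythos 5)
Your proposal is correct and follows essentially the same route as the paper's own proof: the same reduction to a connected graph with a cycle whose pendant trees are all pendant paths (via Lemma~\ref{w pend paths}), the same clique-addition on the endpoints of long pendant paths (Lemma~\ref{add a clique new}), the same descent to the $2$-core (Lemma~\ref{without pendants}), and the same product-measure domination with marginals $O(p)$ fed into Theorem~\ref{thm:degree} with $\del=2$, differing only in the length cutoff ($\lceil n^{1/2}\rceil$ versus $100n^{1/2}+1$) and constant bookkeeping. If anything, your explicit handling of the degenerate case $m\le 1$ (via Theorem~\ref{thm:degreeLower} with $d=2$, $N=1$, or by pairing the lone long endpoint with another pendant endpoint) is slightly more careful than the paper, whose proof tacitly assumes the clique operation is nontrivial when asserting that every pendant path of $\tilde{G}$ is short.
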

\begin{proof}
Observe that if $H$ is a connected component of $G$, then $p(G)\geq p(H)$. Also, by Lemma \ref{w pend paths}, if $G$ is a tree that is not a path, or if $G$ has a pendant tree that is not a pendant path, then $p(G) = \Omega(n^{-1/2})$. We may therefore assume that $G$ is connected, $G$ contains a cycle, and every pendant tree of $G$ is a pendant path. Note that the definition of pendant trees implies every vertex $v$ is the anchor of at most one pendant path in $G$, and that every anchor vertex is contained in $C_2(G)$. Let $p = cn^{-1/2}$, where $c<1$ is a positive constant which we specify later.

Let $\{P_1,\ldots,P_M\}$ be the set of all pendant paths in $G$ on at least $100n^{1/2}+1$ vertices, and let $v_i\in P_i$ denote the vertex of $P_i$ of degree 1.  Let $\tilde{G}$ be the graph obtained from $G$ by adding a clique on $\{v_1,\ldots,v_M\}$. Observe that $M(100n^{1/2} + 1)\le n$, and hence $M < n^{1/2}/100$. Thus by Lemma \ref{add a clique new},
\[
\Pr[\Bzf{p}{G}] < 
\Pr[\Bzf{p}{\tilde{G}}] + .01.
\]
Observe that $C_2(\tilde{G})$ is nonempty since $G$ contains a cycle, and by Lemma \ref{without pendants},
\[
\Pr[B_p(\tilde{G}) \in \zfs(\tilde{G})] \leq \Pr[C_2(B_p(\tilde{G})) \in \zfs(C_2(\tilde{G}))].
\]
Thus it suffices to prove $\Pr[C_2(B_p(\tilde{G})) \in \zfs(C_2(\tilde{G}))] < 0.49$ for $n$ sufficiently large.

For $v\in V(C_2(\tilde{G}))$, let $A_v$ denote the event that $v\in C_2(B_p(\tilde{G}))$.  We claim that
\begin{equation}\Pr[A_v]\le  2p + (100n^{1/2})p^2 = (2c + 100c^2)n^{-1/2}:=q.\label{eq:q}\end{equation}
Indeed, $\Pr[A_v]=p$ if $v$ is not the anchor of some pendant path. If $v$ is the anchor of the pendant path $P$, then for $A_v$ to occur, either an endpoint of $P$ or two consecutive vertices of $P$ must be in $B_p(\tilde{G})$, and a union bound gives the result since $P$ is assumed to have length at most $100n^{1/2}+1$.

Because the $A_v$ events are independent of each other, our bound above implies that
\[\Pr[C_2(B_p(\tilde{G})) \in \zfs(C_2(\tilde{G}))]\le \Pr[B_q(\tilde{G})\in \zfs(C_2(\tilde{G})).\]
Since $C_2(\tilde{G})$ has at most $n$ vertices and minimum degree at least 2, Theorem \ref{thm:degree} implies
\[
\Pr[B_q(\tilde{G}) \in \zfs(C_2(\tilde{G}))] \leq 2 n q^2.
\]
Taking $c= 1/17$ and recalling the definition of $q$ in \eqref{eq:q} gives the desired result.
\end{proof}


\section{Bounds for Trees}\label{Sec:tree}
In this section we prove $\Pr[B_p(T)\in\zfs(T)]\le \Pr[B_p(P_n)\in\zfs(P_n)]$ whenever $T$ is an $n$-vertex tree with $n$ sufficiently large.  We will break our proof into two cases, namely when $p=\Omega(n^{-1})$ and $p=O(n^{-1})$.  The intuition for this choice is that when $p\ll n^{-1},$ the probability that $B_p(P_n)$ is zero forcing is roughly  the probability of choosing an endpoint, while for $p\gg n^{-1}$ it is roughly the probability of choosing two consecutive vertices.  As such, we will need two different arguments for these two regimes.

\subsection{Large $p$}
The following provides a concrete statement agreeing with the intuition outlined above.
\begin{lem}\label{large p: endpoints less likely than consecutive pair}
Let $v_1,\ldots,v_n$ be vertices of a graph $G$.  If $\frac{8}{n}<p<1$ and $n\geq 16$, then \[\Pr[v_1\in B_p(G)\tr{ or }v_n\in B_p(G)]<\Pr[v_i,v_{i+1}\in B_p(G)\tr{ for some }i].\]
\end{lem}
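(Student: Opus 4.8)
The plan is to compute the left-hand side exactly and to bound the right-hand side from below by restricting attention to vertex-disjoint consecutive pairs, exactly as in the proof of Proposition~\ref{prop:p(P_n)}. Since the events $\{v_1\in B_p(G)\}$ and $\{v_n\in B_p(G)\}$ are independent, inclusion--exclusion gives
\[\Pr[v_1\in B_p(G)\tr{ or }v_n\in B_p(G)]=1-(1-p)^2=2p-p^2.\]
For the right-hand side, set $m=\lfloor n/2\rfloor$ and consider the $m$ vertex-disjoint pairs $(v_1,v_2),(v_3,v_4),\ldots$. The events that both vertices of a given pair lie in $B_p(G)$ are independent, each of probability $p^2$, so
\[\Pr[v_i,v_{i+1}\in B_p(G)\tr{ for some }i]\ge 1-(1-p^2)^m.\]
Hence it suffices to prove the clean inequality $(1-p^2)^m<(1-p)^2$.

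The key manipulation is to factor $1-p^2=(1-p)(1+p)$ and cancel the common $(1-p)^2$: the target inequality is equivalent to $(1-p)^{m-2}(1+p)^m<1$. Taking logarithms and regrouping, I would write
\[(m-2)\ln(1-p)+m\ln(1+p)=(m-2)\ln(1-p^2)+2\ln(1+p),\]
and then apply $\ln(1-p^2)\le -p^2$ and $\ln(1+p)\le p$, both of which follow from~(\ref{ineq:expvslin}). This bounds the quantity by $-(m-2)p^2+2p=p\big(2-(m-2)p\big)$, which is negative precisely when $p>\frac{2}{m-2}$ (note $m\ge 8>2$ since $n\ge 16$).

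It then remains to check that the hypotheses $p>8/n$ and $n\ge 16$ force $p>\frac{2}{m-2}$. Since $m=\lfloor n/2\rfloor\ge (n-1)/2$, the inequality $\frac{2}{m-2}\le \frac{8}{n}$ is equivalent to $4(m-2)\ge n$, which holds for all $n\ge 10$; combined with $p>8/n$ this yields $p>\frac{2}{m-2}$ and completes the argument. The one point requiring care -- and the reason a cruder estimate such as $(1-p^2)^m\le e^{-mp^2}$ does not suffice -- is the regime $p\to 1$, where both sides of the claimed inequality tend to $1$ and $(1-p)^2$ becomes tiny; factoring out $(1-p)^2$ \emph{before} estimating, rather than bounding it from below, is exactly what makes the argument work uniformly in $p$.
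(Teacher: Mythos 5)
Your proof is correct. The outer structure matches the paper's own: both compute the left-hand side exactly as $1-(1-p)^2$ and lower-bound the right-hand side by $1-(1-p^2)^m$ via independent vertex-disjoint consecutive pairs (the paper uses $m=\lfloor (n-1)/2\rfloor$, you use $m=\lfloor n/2\rfloor$; immaterial). Where you genuinely diverge is in establishing $(1-p^2)^m<(1-p)^2$. The paper estimates the two logarithms separately via $\frac{x}{1+x}<\ln(1+x)<x$ and arrives at the sufficient condition $n\ge 4/(p-p^2)$; since $p-p^2\to 0$ as $p\to 1$, that condition degenerates, forcing the paper to treat $p\in[1-8/n,1)$ as a separate case, which it dismisses in one sentence --- and as stated (``this same bound holds for $n\ge 16$'') that claim is not literally true once $p$ is close enough to $1$, where $4/(p-p^2)>n$, so the paper's endgame actually needs a different, omitted argument. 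Your factor-and-cancel step --- dividing out $(1-p)^2$ to reduce the target to $(1-p)^{m-2}(1+p)^m<1$, then using the identity $(m-2)\ln(1-p)+m\ln(1+p)=(m-2)\ln(1-p^2)+2\ln(1+p)$ together with $\ln(1-p^2)\le -p^2$ and $\ln(1+p)\le p$ --- produces the single estimate $-(m-2)p^2+2p<0$, valid uniformly for all $p\in(8/n,1)$ once $p>2/(m-2)$; your reduction of this to $4(m-2)\ge n$ (true for $n\ge 10$, hence under the hypothesis $n\geq 16$) is correct, as is the observation $m\ge 8>2$ needed for the cancellation. So your version requires no case split, is airtight precisely in the $p\to 1$ regime the paper fudges (your closing remark correctly identifies why the cruder bound $(1-p^2)^m\le e^{-mp^2}$ fails there), and in fact proves the lemma under the slightly weaker hypothesis $p>2/(\lfloor n/2\rfloor-2)\approx 4/n$.
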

\begin{proof}

Define
$$p_{e} := \Pr[v_1\in B_p(G)\tr{ or }v_n\in B_p(G)]  = 1-(1-p)^2,$$
$$p_m := \Pr[v_i,v_{i+1}\in B_p(G)\tr{ for some }i] \geq 1 - (1-p^2)^{\lfloor \frac{n-1}{2}\rfloor},$$
where this last inequality holds because $p_m$ is strictly more than the probability of having at least one of the pairs $(v_i, v_{i+1})$ with $i$ odd in $B_p(G)$; see the proof of Proposition~\ref{prop:p(P_n)} for a more formal argument.

We will show that when $n \geq \frac{4}{p-p^2} $, we have $p_m > p_e$.  Indeed, $n \geq \frac{4}{p-p^2}$ is equivalent to $\frac{-2p}{(1-p)} \geq \frac{-p^2n}{2}$.
By looking at the Taylor series, it is easy to show $\frac{x}{1+x} < \ln(1+x) < x$ for $|x|<1$, so we have

$$\frac{-2p}{(1-p)} < 2\ln(1-p)\ \text{ and }\  \ln(1-p^2)\cdot \floor{\frac{n-1}{2}}<-p^2\floor{\frac{n-1}{2}} <\frac{-p^2n}{2}.$$

Thus for this range of $n$ we have $2\ln(1-p) > \floor{\frac{n-1}{2}}\ln(1-p^2)$, or equivalently,
$$1-(1-p)^2 <1- (1-p^2)^{\floor{\frac{n-1}{2}}}.$$ Thus $p_m > p_e$ when $n \geq \frac{4}{p-p^2}$.
This bound on $n$ holds for all $p\in (\frac{8}{n}, 1-\frac{8}{n})$, and for $p \in [1 - \frac{8}{n}, 1)$, this same bound holds for $n\geq 16$, completing the proof.
\end{proof}
Analogous to Proposition~\ref{w pend paths}, we can show that graphs with a vertex at the end of two short pendant paths are harder to zero force than paths.

\begin{lem}\label{large p: paths are easier than graphs with joined pendant paths}
	Let $G$ be an $n$-vertex graph that has a vertex $w$ which is the endpoint of two pendant paths $u_1\cdots u_sw$ and $v_1\cdots v_tw$.  If $p \geq 8/(n-s-t)$ and $n-s-t \geq 14$, then $$\Pr[B_p(G)\in\zfs(G)]<\Pr[B_p(P_n)\in \zfs(P_n)].$$
\end{lem}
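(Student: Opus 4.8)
The plan is to reduce the question to a comparison of \emph{path} failure probabilities, the key point being that the two pendant paths hanging off $w$ make $G$ fail to be zero forcing whenever each of them, viewed as a standalone path, already fails on its own.

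First I would establish the structural reduction: if $B\subseteq V(G)$ is a zero forcing set of $G$, then either $B\cap\{u_1,\dots,u_s\}$ is a zero forcing set of the path $P_s=u_1\cdots u_s$, or $B\cap\{v_1,\dots,v_t\}$ is a zero forcing set of the path $P_t=v_1\cdots v_t$. I would prove the contrapositive: assume neither restriction is zero forcing, so $B$ contains neither endpoint of either pendant path and no two consecutive vertices within either path. Running the forcing process from $B$ and looking at the first moment at which some pendant-path vertex outside $B$ is colored, I would rule out every possible forcer. At that moment $w$ still has both $u_s$ and $v_t$ white (both are endpoints of pendant paths, hence not in $B$ and not yet forced), so $w$ has at least two white neighbours and cannot force; an internal blue vertex $u_i\in B$ has both neighbours $u_{i-1},u_{i+1}$ white (no two consecutive lie in $B$, and none are yet forced) and cannot force either; and no vertex of $G$ outside the pendant paths is adjacent to a pendant-path vertex except through $w$. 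This contradiction shows $u_1$ never turns blue, so $B\notin\zfs(G)$.

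Since $\{u_1,\dots,u_s\}$ and $\{v_1,\dots,v_t\}$ are disjoint, the two events in the reduction are independent, giving $\Pr[B_p(G)\notin\zfs(G)]\ge f(s)f(t)$, where $f(r):=\Pr[B_p(P_r)\notin\zfs(P_r)]$. As $\Pr[B_p(P_n)\notin\zfs(P_n)]=f(n)$, it then suffices to prove $f(s)f(t)>f(n)$. Here I would use the description $f(r)=(1-p)^2 N(r-2)$ for $r\ge 2$, where $N(k)$ is the probability that $B_p$ contains no two adjacent vertices of a fixed path on $k$ vertices; the factor $(1-p)^2$ records that the two endpoints are excluded, after which only the interior matters. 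The degenerate cases $s=1$ or $t=1$ are handled directly and are strictly easier. After cancelling $(1-p)^2$, the goal $f(s)f(t)>f(n)$ becomes $(1-p)^2 N(s-2)N(t-2)>N(n-2)$.

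The engine of the comparison is a super-multiplicativity bound for $N$: splitting a path on $n-2$ vertices into three pairwise non-adjacent blocks of sizes $s-2$, $t-2$, and $m:=n-s-t$, separated by two single-vertex gaps (which consumes exactly the $n-2$ vertices), the ``no two adjacent'' event on the whole path implies the same event on each block, and these block events are independent; hence $N(n-2)\le N(s-2)N(t-2)N(m)$. Since $N(s-2),N(t-2)>0$, the desired inequality reduces to the single estimate $N(m)<(1-p)^2$. Finally, the disjoint-pairs bound yields $N(m)\le (1-p^2)^{\lfloor m/2\rfloor}$, and $(1-p^2)^{\lfloor m/2\rfloor}<(1-p)^2$ is precisely the inequality established inside the proof of Lemma~\ref{large p: endpoints less likely than consecutive pair} (applied with parameter $m+1$ in place of $n$, which is valid for $m\ge 15$ since $p\ge 8/m>8/(m+1)$); the single boundary value $m=14$ can be checked by hand, as there the ratio $(1-p^2)^{7}/(1-p)^2=(1-p)^5(1+p)^7$ is decreasing in $p$ and at most $1$ on the range $p\in[8/14,1)$. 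I expect the structural reduction of the first paragraph to be the main obstacle: expressing failure of $G$ as the joint failure of two \emph{independent} standalone paths is exactly what makes the probabilities both computable and tight enough to beat the factor $(1-p)^2$, and the ``first forced pendant vertex'' argument must be arranged so that $w$'s two permanently-white neighbours and the no-two-consecutive condition genuinely block every possible force.
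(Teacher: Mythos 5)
Your proposal is correct, but it takes a genuinely different route from the paper's. The paper proves this lemma by a coupling: it relabels $V(P_n)$ as $u_1\cdots u_s w_1\cdots w_r v_t\cdots v_1$ so that $B_p(G)=B_p(P_n)$ as sets, conditions on the trace $F=B_p\cap\{u_1,\ldots,u_{s-1},v_1,\ldots,v_{t-1}\}$, and in the nontrivial case compares a \emph{necessary} event for forcing $G$ (that $u_s$ or $v_t$ be blue, since $w$ cannot force before one of them is) against a \emph{sufficient} event for forcing $P_n$ (a consecutive pair among $\{u_s,w_1,\ldots,w_r,v_t\}$), finishing by applying Lemma~\ref{large p: endpoints less likely than consecutive pair} off-the-shelf to $n-s-t+2\geq 16$ vertices. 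You avoid the coupling and conditioning entirely: your structural reduction is sound as argued (before the first pendant-path vertex outside $B$ is colored, $w$ permanently has the two white neighbors $u_s,v_t$, and every blue interior pendant vertex has two white neighbors, so no force can enter either path), and it decouples failure of $G$ into \emph{independent} failures of the two standalone paths; the factorization $f(r)=(1-p)^2N(r-2)$ plus the super-multiplicativity $N(n-2)\le N(s-2)N(t-2)N(m)$ with two one-vertex gaps then reduces everything to $N(m)<(1-p)^2$, which is exactly the analytic inequality $(1-p^2)^{\lfloor m/2\rfloor}<(1-p)^2$ living \emph{inside} the proof of Lemma~\ref{large p: endpoints less likely than consecutive pair}. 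The trade-offs: the paper's route is shorter and uses that lemma as a black box with parameter $n-s-t+2=m+2$, whereas your parameter lands at $m+1$, forcing you to open the lemma's proof, hand-check the boundary value $m=14$, and treat $s=1$ or $t=1$ separately; in exchange you get an explicit quantitative failure bound $\Pr[B_p(G)\notin\zfs(G)]\ge f(s)f(t)$ and a reusable structural lemma of independent interest. Two cosmetic points: your $m=14$ check should conclude the ratio $(1-p)^5(1+p)^7$ is \emph{strictly} below $1$ (it is, being about $0.34$ at $p=4/7$ and decreasing), since strict inequality is needed throughout; and, exactly like the paper, your argument implicitly assumes $p<1$, as at $p=1$ both probabilities equal $1$.
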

\begin{proof}
	Let $w_1,\ldots,w_r$ be an arbitrary ordering of $V(G)\sm\{u_1,\ldots,u_s,v_1,\ldots,v_t\}$.  Relabel the vertices of $P_n$ so that its vertices along the path are $u_1\cdots u_s w_1\cdots w_r v_t\cdots v_1$.  Because $V(G)=V(P_n)$, we can couple our random variables so that $B_p:=B_p(G)=B_p(P_n)$.   Let $F=B_p\cap \{u_1,\ldots,u_{s-1},v_1,\ldots,v_{t-1}\}$.  It suffices to show for all $S\sub \{u_1,\ldots,u_{s-1},v_1,\ldots,v_{t-1}\}$ that
	\[\Pr[B_p(G)\in \zfs(G)|F=S]\le \Pr[B_p(P_n)\in \zfs(P_n)|F=S],\]
	with strict inequality for at least one such set.  If $S$ contains two consecutive vertices $u_i$ and $u_{i+1}$, two consecutive vertices $v_j$ and $v_{j+1}$, or $u_1$ or $v_1$, then $B_p\in \zfs(P_n)$ so the result holds trivially.  Thus from now on we can  assume this is not the case.

	With this assumption, the vertex $u_s$ in $G$ can only be colored blue by $B_p$ if at least one of $u_s$ or $v_t$ is in $B_p$ (this is because $u_s$ is adjacent to $u_{s-1}$, which does not enact any forces by assumption on $S$, and to $w$, which can only enact a force if at least one of $u_s,v_t$ are colored blue at some point). On the other hand, $B_p$ will be a zero forcing set for $P_n$ provided $B_p$ contains two consecutive vertices from $\{u_s, w_1, \cdots, w_r, v_t\}$.

	By applying Lemma~\ref{large p: endpoints less likely than consecutive pair} to the vertex set $\{u_s,w_1,\ldots,w_r,v_t\}$, we see that if $p>\frac{8}{n-s-t+2}$ and $n-s-t+2\ge 16$, then
	$$ \Pr[B_p \text{ contains $u_s$ or $v_t$}]<\Pr[B_p \text{ contains a consecutive pair from } \{u_s, w_1, \cdots, w_r, v_t\}
	].$$
	As these two events are independent of the random set $F$, we conclude that for $p \geq 8/(n-s-t) > 8/(n-s-t+2)$ and $n-s-t+2 \geq 16$,
	$$ \Pr[B_p(G)\in\zfs(G)|F=S]<\Pr[B_p(P_n)\in \zfs(P_n)|F=S],$$
	and from this we conclude the result.
\end{proof}
With this we can solve Theorem~\ref{thm:tree} when $p=\Omega(n^{-1})$.
\begin{prop}\label{prop:tree1}
	If $T\ne P_n$ is an $n$-vertex tree with $n\ge 42$, and if $ \frac{24}{n}<p<1$, then
	\[\Pr[B_p(T)\in\zfs(T)] <\Pr[B_p(P_n)\in \zfs(P_n)].\]
\end{prop}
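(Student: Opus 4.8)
The plan is to deduce the proposition directly from Lemma~\ref{large p: paths are easier than graphs with joined pendant paths}. That lemma says that if $T$ has a vertex $w$ which is the common endpoint of two pendant paths $u_1\cdots u_s w$ and $v_1\cdots v_t w$, and if $p\ge 8/(n-s-t)$ and $n-s-t\ge 14$, then $\Pr[\Bzf{p}{T}]<\Pr[\Bzf{p}{P_n}]$. So the whole task reduces to locating, inside an arbitrary tree $T\ne P_n$, a vertex anchoring two pendant paths whose combined length $s+t$ is not too large. Since we are given $p>24/n$ and $n\ge 42$, it suffices to find such a $w$ with $s+t\le \tfrac{2n}{3}$: then $n-s-t\ge \tfrac n3\ge 14$, and $8/(n-s-t)\le 8/(n/3)=24/n<p$, so both hypotheses of the lemma hold and we are done. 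Note that the constants $24$ and $42$ in the statement are exactly calibrated so that $\tfrac n3\ge 14$ and $24/n=8/(n/3)$.

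Thus the real content is the following structural claim: every $n$-vertex tree $T\ne P_n$ has a vertex $w$ that anchors two pendant paths of total length at most $\tfrac{2n}{3}$. To prove it I would first let $S$ be the set of vertices of degree at least $3$ (nonempty because $T$ is not a path) and let $\tau$ be the minimal subtree of $T$ spanning $S$. By minimality every leaf of $\tau$ lies in $S$, and at such a leaf $w$ exactly one incident edge points into $\tau$ while the remaining (at least two) edges lead into subtrees containing no branch vertex; each such subtree has maximum degree $2$ and hence is a path, i.e.\ a pendant path anchored at $w$ (the far endpoint has degree $1$ and $w$ has degree $>2$, matching the definition). Hence every leaf of $\tau$ anchors at least two pendant paths.

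Then I would split into two cases. If $\tau$ is a single vertex $w$, then $T$ is a spider whose legs are pendant paths of lengths summing to $n-1$; with at least three legs the two shortest sum to at most $\tfrac23(n-1)<\tfrac{2n}{3}$. If $\tau$ has at least two vertices, then it has at least two leaves $w_1,w_2$, each anchoring at least two pendant paths. The key observation is that distinct pendant paths are internally vertex-disjoint, so the sum of all their lengths is at most $n$. If at both $w_1$ and $w_2$ the two shortest pendant paths had total length exceeding $\tfrac{2n}{3}$, these four distinct pendant paths would have total length exceeding $\tfrac{4n}{3}>n$, a contradiction; so at one of them, say $w$, the two shortest pendant paths sum to at most $\tfrac{2n}{3}$. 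This establishes the claim, and plugging the resulting $w,s,t$ into Lemma~\ref{large p: paths are easier than graphs with joined pendant paths} finishes the proof.

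I expect the main obstacle to be the structural claim rather than the probabilistic estimate: one must choose the anchor carefully (a leaf of the Steiner tree of branch vertices) so that the two attached paths are genuine pendant paths, and then run the disjointness-plus-averaging argument to guarantee $s+t\le \tfrac{2n}{3}$. One should also double-check the boundary degree conditions in the definition of a pendant path so that the paths produced truly satisfy the hypotheses of the lemma, and confirm that the case $\tau=\{w\}$ (the spider) is handled separately since there the averaging argument is replaced by the direct bound $\tfrac23(n-1)$.
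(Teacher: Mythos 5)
Your proof is correct, and its endgame coincides with the paper's: both arguments reduce the proposition to locating a vertex $w$ that anchors two pendant paths with $s+t\le \tfrac{2n}{3}$, and then invoke Lemma~\ref{large p: paths are easier than graphs with joined pendant paths} with exactly the calibration you describe ($n-s-t\ge n/3\ge 14$ for $n\ge 42$, and $8/(n-s-t)\le 24/n<p$). The structural step, however, is genuinely different. The paper chooses two leaves $u,v$ at minimum pairwise distance, asserts that the $u$--$v$ path is the union of two pendant paths with a common anchor $w$, and rules out $s+t>\tfrac{2n}{3}$ by a distance contradiction involving a third leaf, split into the cases of exactly three versus at least four leaves. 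You instead take the minimal subtree $\tau$ spanning the branch vertices, show that every leaf of $\tau$ anchors at least two pendant paths, and obtain the length bound by disjointness plus averaging: in the spider case the two shortest of at least three legs sum to at most $\tfrac{2}{3}(n-1)$, and otherwise two distinct leaves of $\tau$ yield four pendant paths whose non-anchor vertex sets are pairwise disjoint, so the two-shortest sums cannot both exceed $\tfrac{2n}{3}$. Your route costs a little more setup but is more robust; in fact the paper's opening observation is delicate, since the path between a closest pair of leaves can pass through \emph{two} branch vertices (e.g.\ attach two long legs at $x_2$ and $x_4$ of a path $x_1x_2x_3x_4x_5$: the closest pair of leaves is $x_1,x_5$, whose connecting path has no single anchor $w$ of the required form), whereas your leaf-of-$\tau$ construction guarantees the common anchor by design and also dispenses with the leaf-count case analysis. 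One small point worth keeping in a final write-up is the verification you allude to at the end: that the hanging subtrees at a leaf of $\tau$ really are pendant paths in the paper's sense (leaf endpoint of degree $1$, internal degrees $2$, anchor degree $>2$, at least two vertices), which your Steiner-tree minimality argument does deliver.
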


\begin{proof}
	Let $u,v$ be any two leaves of $T$ which are at a shortest distance from each other.  Observe that the path between $u,v$ consists of two pendant paths, say $uu_2, \cdots u_sw$ and $vv_2, \cdots v_tw$.  Because $T$ is not a path, there either exists exactly one leaf $\ell\ne u,v$, or at least two leaves $i,j\ne u,v$.

	Suppose for contradiction that  $s + t > \frac{2n}{3}$.  If $T$ has exactly three leaves, then \[d(\ell,u)=d(\ell,w)+s< n/3+s,\]
	where this inequality used that none of the internal vertices along the path from $\ell$ to $w$ use any of the vertices along the path from $u$ to $v$, of which there are more than $2n/3+1$ vertices.  By a symmetric argument we have $d(\ell,v)<n/3+t$.  In particular, we must have \[d(\ell,u)+d(\ell,v)<2n/3+s+t<2(s+t)=2d(u,v),\]
	and hence at least one of $d(\ell,u),d(\ell,v)$ is smaller than $d(u,v)$, a contradiction to our choice of $u,v$.  Similarly if $T$ has at least four leaves, then  $d(i,j) < \frac{n}{3}$, which again gives a contradiction.  Thus we can assume $s+t\le 2n/3$. With this, our hypothesis implies $p \geq \f{8}{n-s-t}$ and $n-s-t+2 \geq 16$, so we can apply Lemma \ref{large p: paths are easier than graphs with joined pendant paths} to give the desired result.
\end{proof}

\subsection{Small $p$}
We will prove our result for small $p$ by upper bounding $z(T;k)$, which we recall is the number of zero forcing sets of $T$ of size $k$.

\begin{lem}\label{lem:leavesMaxDeg}
If $T\ne P_n$ is an $n$-vertex tree, then \[z(T;k)\le \frac{13 k^4}{n^2}{n\choose k}.\]
\end{lem}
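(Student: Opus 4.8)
The plan is to prove the equivalent probabilistic statement that a uniformly random $k$-subset $B$ of $V(T)$ satisfies $\Pr[B\in\zfs(T)]\le 13k^4/n^2$; since $z(T;k)=\binom{n}{k}\Pr[B\in\zfs(T)]$ these are the same claim. I may assume $13k^4/n^2<1$, as otherwise the bound is trivial, so that $k=O(n^{1/2})$ and in particular $k/n$ is small. I would prove the statement by induction on $|V(T)|$, phrased for a fixed ground set of size $n$: for every tree $S$ with $V(S)\subseteq[n]$ that is not a path, a uniform random $k$-subset $B$ of $[n]$ satisfies $\Pr[B\cap V(S)\in\zfs(S)]\le 13k^4/n^2$. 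It is important that the ambient $n$, not $|V(S)|$, appears in the bound, since that is what lets the induction close when it is applied to a subtree. The base case is the smallest non-path tree $K_{1,3}$.

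The structural heart is to locate a good cut vertex. Since $T$ is not a path it has a vertex of degree at least $3$; among these I would choose an \emph{extremal} branch vertex $w$, namely one for which at most one component of $T-w$ contains a further vertex of degree at least $3$. Such a $w$ exists: take a leaf of the minimal subtree of $T$ spanning all vertices of degree at least $3$. Writing $d=\deg(w)$, the graph $T-w$ has exactly $d\ge 3$ components, and by the choice of $w$ at least $d-1$ of them are pendant paths $Q_1,\dots,Q_{d-1}$, while at most one of them, call it $R$, is a non-path tree.

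The key necessary condition is that at least $d-1$ of these components are ``self-sufficient.'' Fix a forcing process for $B$ on $T$. The vertex $w$ performs at most one force, hence forces into at most one component; for every component $T_i$ into which $w$ does not force, restricting the chronological list of forces to $T_i$ remains valid, because the only edge leaving $T_i$ is the one at $w$ and deleting the neighbor $w$ only weakens each force condition. This shows $B\cap V(T_i)\in\zfs(T_i)$, in the spirit of Observation~\ref{cut vertex obs}. Thus, letting $E_i$ denote the increasing event $\{B\cap V(T_i)\in\zfs(T_i)\}$, being a zero forcing set of $T$ forces at most one of the events $E_1,\dots,E_d$ to fail, so
\[
\Pr[B\in\zfs(T)]\le \prod_{i} \Pr[E_i]+\sum_{j}\prod_{i\ne j}\Pr[E_i].
\]

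To bound the right-hand side I would use that vertex-disjoint increasing events in the uniform $k$-subset model are negatively associated, giving $\Pr[\bigcap_{i\in J}E_i]\le\prod_{i\in J}\Pr[E_i]$. For a pendant path $Q_i$ the computation behind Proposition~\ref{prop:p(P_n)} yields $\Pr[E_i]\le 2k/n+|Q_i|\,k^2/n^2$ (an endpoint or a consecutive pair), and for the single non-path component I would invoke the induction hypothesis $\Pr[E_R]\le 13k^4/n^2$ (using the path estimate instead if $R$ happens to be a path). Substituting these, every surviving term either contains at least two path-factors, each of size $\Theta(k/n)$ so that their product is $O(k^4/n^2)$ since $k\ge 1$ forces $k^2/n^2\le k^4/n^2$, or else contains the factor $\Pr[E_R]=O(k^4/n^2)$ multiplied by a quantity that stays $O(1)$; when $d$ is large the terms have many factors and decay geometrically, so the sum is controlled. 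The main obstacle is exactly this last bookkeeping: taming the one non-path component $R$ (which is precisely why the fixed–ground-set induction is set up, so that its contribution is itself $O(k^4/n^2)$ rather than merely $O(1)$) and then extracting the clean constant $13$ from the several regimes ($d=3$ versus $d\ge 4$, and $R$ a path versus not). My loose union bounds above only give $O(k^4/n^2)$ with a larger constant, so the final step requires tightening the path estimate and the term-by-term accounting rather than bounding spare factors by $1$.
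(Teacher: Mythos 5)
Your skeleton is sound and genuinely different from the paper's argument. The structural core is correct: each vertex performs at most one force, so at most one component of $T-w$ receives a force from $w$, and for every other component the restriction of the chronological list of forces stays valid (removing the neighbor $w$ only weakens the force condition), so its restriction of $B$ is a zero forcing set; your extremal choice of $w$ (a leaf of the subtree spanned by the branch vertices) does guarantee at least $d-1$ of the $d\ge 3$ components are pendant paths; and negative association of uniform $k$-subset indicators legitimately gives the product bound for increasing events on disjoint vertex sets, with the fixed-ground-set induction correctly set up so that the one non-path component contributes $13k^4/n^2$ rather than $O(1)$. The paper does none of this: it counts size-$k$ zero forcing sets directly by their internal adjacency pattern (two disjoint adjacent pairs; three vertices $u\sim v\sim w$; no adjacent pair; exactly one adjacent pair), uses that every zero forcing set of a tree has $k\ge \ell/2$ and $k\ge \Delta-1$ to control the number of leaves and the maximum degree, and applies $\binom{n-t}{k-t}\le (k/n)^t\binom{n}{k}$ so the four cases sum to exactly $(1+2+6+4)\frac{k^4}{n^2}\binom{n}{k}$, with no induction, no probability, and the constant $13$ falling out mechanically.

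The genuine gap is the one you flag yourself, and it is worse than a deferred optimization: your sketched accounting provably overshoots $13$. Take $T$ a spider with three pendant paths at $w$; your bound is $q_1q_2+q_1q_3+q_2q_3$ with $q_i\le 2k/n+|Q_i|k^2/n^2$, which expands to at most $\bigl(12k^2+4k^3+\tfrac{1}{3}k^4\bigr)/n^2$, and at $k=1$ this is about $16.3/n^2>13/n^2$ — while the truth there is $z(T;1)=0$, since $P_n$ is the unique graph with zero forcing number $1$. Likewise your $d\ge 4$ bound of the form $d\,(q^*)^{d-3}q_{(1)}q_{(2)}$ gives roughly $19k^4/n^2$. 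Closing this requires the joint-probability refinement $\frac{k(k-1)}{n(n-1)}$ in place of $k^2/n^2$ (so pair terms vanish at $k=1$), separate disposal of $k<\max\{\Delta-1,\ell/2\}$ where $z(T;k)=0$ (the paper's first move, which you never import), and numerical verification for small $n$ where your ``$k/n$ is small'' and $q^*\le 1/2$ assumptions fail — none of which is carried out. As written, your argument plausibly yields $z(T;k)\le C\frac{k^4}{n^2}\binom{n}{k}$ for some larger explicit $C$, which would in fact suffice for the downstream use in Proposition~\ref{prop:tree2}, but it does not prove the lemma with the stated constant $13$.
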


\begin{proof}
Let $\Del$ denote the maximum degree of $T$ and $\ell$ the number of leaves of $T$.  Observe that the zero forcing number of a graph is always at least the minimum number of paths needed to cover the vertices of the graph.  In particular, every zero forcing set for the tree $T$ has size at least $\ell/2$.  It is also known (see for example \cite{O21}) that every zero forcing set for a tree $T$ has size at least $\Del-1$. Thus for $k<\max\{\Del-1,\ell/2\}$, we have $z(T;k)=0$ and the bound trivially holds.  From now on we assume $\Del-1,\ell/2\le k$.  The bound is also trivial when $k=n$, so we may assume $k<n$.  Lastly, we may also assume $\Del\ge 3$ since $T$ is not a path.

We first count the number of zero forcing sets $S$ of size $k$ which have two pairs of vertices $u,v$ and $x,y$ with $u\sim v,\ x\sim y$ and $\{u,v\}\cap \{x,y\}=\emptyset$.  In this case the number of sets $S$ is at most
\[(n-1)^2 {n-4\choose k-4},\]
since one can choose each pair (which is just an edge in $T$) in at most $n-1$ ways.

We next count the number of zero forcing sets $S$ of size $k$ which contain three vertices $u,v,w$ with $u\sim v\sim w$.  The number of such $S$ is at most \[(n-1)(2\Del-2){n-3\choose k-3},\] since one can first choose two adjacent vertices in $n-1$ ways, then a third vertex which is adjacent to at least one of these in at most $2\Del-2$ ways, and then the remaining vertices in ${n-3\choose k-3}$ ways.

We next count the number of $S$ that contain no two adjacent vertices.  Because $k<n$ and $S$ is a zero forcing set, at least one vertex of $S$ must be able to force.  Because $S$ contains no adjacent vertices, this is only possible if $S$ contains a leaf.  Choose such a leaf $u_1$ to include in $S$, which can be done in $\ell$ ways.  Let $u_1u_2\cdots u_s$ be the unique path in $T$ with $\deg(u_i)=2$ for $1<i<s$ and $\deg(u_s)\ne 2$.
\begin{claim}
The set $S$ either contains a leaf $v\ne u_1$, or a neighbor of $u_s$ other than $u_{s-1}$.
\end{claim}
\begin{proof}
Assume this were not the case.  Because $S$ contains no two adjacent vertices, no additional leaves, and no other neighbor of $u_s$, it is not difficult to see that the only vertices that will be colored blue by $S$ are $S\cup \{u_2,\ldots,u_s\}$.  Because $S$ is a zero forcing set, we must have $V(T)=S\cup \{u_2,\ldots,u_s\}$.  However, by assumption the only leaves that could be in $S\cup \{u_2,\ldots,u_s\}$ are $u_1$ and $u_s$, but $T\ne P_n$ contains at least three leaves, so  $V(T)\ne S\cup \{u_2,\ldots,u_s\}$, giving the desired contradiction.
\end{proof}
In total then, we see that the number of choices for such a set $S$ is at most
\[\ell(\ell+\Del-1){n-2\choose k-2},\]
where the terms in the expression above count the number of choices for $u_1$, followed by the number of choices for some additional leaf or neighbor of $u_s$, followed by the number of arbitrary sets of $k-2$ vertices.

It remains to count $S$ that have exactly one pair of adjacent vertices.  One can first choose the adjacent pair $u_1,v_1\in S$ in at most $n-1$ ways.  If $\deg(u_1)=2$, then let $u_1\cdots u_s$ be the unique path from $u_1$ with $u_2$ the neighbor of $u_1$ not equal to $v_1$, and with $\deg(u_i)=2$ for all $i<s$ and $\deg(u_s)\ne 2$.  If $\deg(u_1)\ne 2$, then we simply consider the 1 vertex path $u_1$. Analogously define the path $v_1\cdots v_t$.  As in the previous case, because $S$ contains no other pair of adjacent vertices, it must contain at least one leaf or one neighbor of either $u_s$ or $v_t$ that is not $u_{s-1}$ or $v_{t-1}$.  In total then the number of choices for such an $S$ is at most
 \[(n-1)(\ell+2\Del-2){n-3\choose k-3}.\]

In total, $z(T;k)$ is at most
{\footnotesize \begin{align*}& (n-1)^2{n-4\choose k-4}+(n-1)(2\Del-2){n-3\choose k-3}+ \ell(\ell+\Del-1){n-2\choose k-2}+(n-1)(\ell+2\Del-2){n-3\choose k-3}\\
&\le n^2\cdot \frac{k^4}{n^4}{n\choose k}+n(2\Del-2)\cdot \frac{k^3}{n^3}{n\choose k}+\ell(\ell+\Del-1)\cdot \frac{k^2}{n^2}{n\choose k}+n(\ell+2\Del-2)\cdot \frac{k^3}{n^3}{n\choose k},\end{align*}}
where this last inequality used $n-1\le n$ and that ${n-t\choose k-t}\le (k/n)^t {n\choose k}$ for all integers $t\ge 0$. Using our assumptions $\Del-1\le k$ and $\ell\le 2k$, we find that the above expression is at most $(1+2+6+4)\frac{k^4}{n^2}{n\choose k}$ as desired.
\end{proof}

With this we can prove the following.
\begin{prop}\label{prop:tree2}
For every $C>0$, there exists an integer $n_0$ such that for all $n\ge n_0$, if $T\ne P_n$ is an $n$-vertex tree and $0<p\le \f{C}{n}$, then
\[\Pr[B_p(T)\in\zfs(T)]<\Pr[B_p(P_n)\in\zfs(P_n)].\]
\end{prop}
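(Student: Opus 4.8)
The plan is to use the zero forcing polynomial identity (\ref{zf poly coeff}) to reduce the claim to a coefficientwise comparison, exploiting the fact that $P_n$ has exactly two zero forcing sets of size $1$ (its endpoints) while $T$ has none. Since $T\ne P_n$ is a tree we have $Z(T)\ge 2$, so $z(T;1)=0$ and (\ref{zf poly coeff}) gives $\Pr[\Bzf{p}{T}]=\sum_{k=2}^n z(T;k)p^k(1-p)^{n-k}$. On the path side, setting $k=1$ in (\ref{eqn:zfs(path)}) yields $z(P_n;1)=\binom{n}{1}-\binom{n-2}{1}=2$, so discarding all but the $k=1$ term gives $\Pr[\Bzf{p}{P_n}]\ge 2p(1-p)^{n-1}$. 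Hence it suffices to prove $\sum_{k=2}^n z(T;k)p^k(1-p)^{n-k}<2p(1-p)^{n-1}$.

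First I would divide through by $p(1-p)^{n-1}$, which is legitimate since $0<p<1$. Writing $q=p/(1-p)$, the goal becomes $\sum_{k=2}^n z(T;k)\,q^{k-1}<2$. Next I would insert the bound $z(T;k)\le 13k^4 n^{-2}\binom{n}{k}$ from Lemma~\ref{lem:leavesMaxDeg} together with the elementary estimate $\binom{n}{k}\le n^k/k!$, and then extract a single factor of $q$ by rewriting $n^k q^{k-1}=n^2 q\,(nq)^{k-2}$. This collapses the left-hand side to $13\,q\sum_{k\ge 2}\frac{k^4}{k!}(nq)^{k-2}$, where I have harmlessly extended the sum to $k=\infty$.

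The remaining series is the crux. Since $p\le C/n$ gives $q\le (C/n)/(1-C/n)\le 2C/n$ once $n\ge 2C$, we have $nq\le 2C$; as each term $\frac{k^4}{k!}(nq)^{k-2}$ is non-decreasing in $nq\ge 0$, the sum is at most $\sum_{k\ge 2}\frac{k^4}{k!}(2C)^{k-2}=:K_C$, a finite constant depending only on $C$ (convergence is immediate from the ratio test). Thus the whole expression is at most $13\,q\,K_C\le 26\,C\,K_C/n$, which is below $2$ for all $n$ past some $n_0=n_0(C)$. Taking $n_0$ also large enough to ensure $n\ge 2C$ and to meet the hypotheses of Lemma~\ref{lem:leavesMaxDeg} finishes the proof.

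The main obstacle, and the reason the argument must be organized as a ratio comparison rather than an absolute bound, is uniformity in $p$ across the whole range $(0,C/n]$. A tempting shortcut is to note $\sum_k k^4\binom{n}{k}p^k(1-p)^{n-k}=\E[X^4]$ for $X\sim\mathrm{Bin}(n,p)$ and bound this by a constant $O_C(1)$ via factorial moments, giving $\Pr[\Bzf{p}{T}]=O_C(n^{-2})$; but this fails to beat $2p(1-p)^{n-1}$ when $p$ is much smaller than $n^{-1}$ (for instance $p$ of order $n^{-3}$), because then the path probability itself sits far below $n^{-2}$. Dividing by the path's leading term \emph{before} estimating is exactly what rescues uniformity: it trades the absolute fourth-moment bound for the convergent series above, each term of which carries the saving factor $q^{k-1}$ that vanishes as $p\to 0$.
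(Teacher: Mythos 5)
Your proposal is correct and follows essentially the same route as the paper's proof: both rest on Lemma~\ref{lem:leavesMaxDeg} combined with $\binom{n}{k}\le n^k/k!$, a convergent series whose ratio is controlled by $p\le C/n$, and a comparison against the path's size-one zero forcing sets. The only (cosmetic) difference is that you retain the $(1-p)^{n-k}$ factors and normalize by $2p(1-p)^{n-1}$, whereas the paper drops $(1-p)^{n-k}\le 1$ to get $\Pr[\Bzf{p}{T}]\le 13CK_C\,p/n\le \frac{1}{2}p$ and compares with $\Pr[\Bzf{p}{P_n}]\ge p$ --- since that absolute bound keeps a factor of $p$, it is already uniform over all $0<p\le C/n$, so the ratio framing you argue is essential is clean but not actually needed.
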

\begin{proof}

By the previous lemma and the trivial bound ${n\choose k}\le n^k/k!$, we have

\[\Pr[B_p(T)\in\zfs(T)]\le \sum_k z(T;k)p^k\le \sum_k \frac{13 k^4 n^{k-2}}{k!} p^k\le p\cdot 13Cn^{-1}\sum_k  \frac{k^4C^{k-2}}{k!}.\]
The above sum is convergent, so for $n$ sufficiently large we find
\[\Pr[B_p(T)\in\zfs(T)]\le \frac{1}{2}p\le \Pr[B_p(P_n)\in \zfs(P_n)],\]
where this latter inequality is strict provided $p>0$.
\end{proof}
With Propositions~\ref{prop:tree1} and~\ref{prop:tree2} we can prove Theorem~\ref{thm:tree}, which we restate below.
\begin{thmn}[\ref*{thm:tree}]
If $T$ is an $n$-vertex tree with $n$ sufficiently large, then for all $0\le p\le 1$,
\[\Pr[\Bzf{p}{T}]\le \Pr[\Bzf{p}{P_n}],\]
with equality holding if and only if either $p\in \{0,1\}$ or $T=P_n$.
\end{thmn}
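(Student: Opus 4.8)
The plan is to deduce the theorem by stitching together Propositions~\ref{prop:tree1} and~\ref{prop:tree2}, which between them handle every $p$ in the open interval $(0,1)$, and then to treat the boundary values $p\in\{0,1\}$ directly.

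First I would dispose of the trivial cases. If $T=P_n$ then the two sides are literally equal for every $p$, so assume $T\ne P_n$. For $p=0$ we have $B_0(T)=\emptyset$, which is never a zero forcing set of a graph on $n\ge 1$ vertices, so both probabilities equal $0$; for $p=1$ we have $B_1(T)=V(T)$, which is always a zero forcing set, so both probabilities equal $1$. Hence equality holds whenever $p\in\{0,1\}$, regardless of $T$.

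It then remains to prove strict inequality for $T\ne P_n$ and $0<p<1$. Here I would fix the constant $C=24$ in Proposition~\ref{prop:tree2}, let $n_0$ be the resulting threshold, and take $n\ge \max\{42,n_0\}$. I would then split $(0,1)$ at the point $p=24/n$: for $0<p\le 24/n$, Proposition~\ref{prop:tree2} applied with $C=24$ gives the strict inequality $\Pr[\Bzf{p}{T}]<\Pr[\Bzf{p}{P_n}]$, while for $24/n<p<1$, Proposition~\ref{prop:tree1} gives it. These two subintervals cover all of $(0,1)$, with the shared endpoint $p=24/n$ falling into the first, so the strict inequality holds for every $p\in(0,1)$.

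Combining the three cases yields $\Pr[\Bzf{p}{T}]\le \Pr[\Bzf{p}{P_n}]$ for all $0\le p\le 1$, with equality precisely when $p\in\{0,1\}$ or $T=P_n$, as claimed. The genuine content of the argument lies entirely in the two propositions---the threshold behaviour of the path in the regime $p=\Omega(n^{-1})$ and the degree/leaf-counting bound on $z(T;k)$ in the regime $p=O(n^{-1})$---so the only thing to be careful about in this synthesis is the bookkeeping: choosing the shared constant $C=24$ so that the two $p$-regimes tile $(0,1)$ with no gap, and verifying that the endpoint cases $p\in\{0,1\}$ really do produce equality for every tree.
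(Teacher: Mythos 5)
Your proposal is correct and follows exactly the paper's own proof: split $(0,1)$ at $p=24/n$, apply Proposition~\ref{prop:tree1} for $24/n<p<1$ and Proposition~\ref{prop:tree2} (with $C=24$) for $0<p\le 24/n$, and note that $p\in\{0,1\}$ and $T=P_n$ give equality trivially. Your version merely makes explicit the bookkeeping (taking $n\ge\max\{42,n_0\}$ and verifying the boundary equalities) that the paper leaves implicit.
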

\begin{proof}
The equality of the result trivially holds for either $p\in \{0,1\}$ or $T = P_n$. If $T \neq P_n$ with $n$ sufficiently large, by Proposition~\ref{prop:tree1}, the result holds for $24/n<p<1$, and by Proposition~\ref{prop:tree2} the result holds for $0<p\le 24/n$.
\end{proof}

\section{Concluding Remarks}\label{Sec:remark}
We have many open problems regarding the threshold probability $\thresh{G}$, which we recall is the unique $p\in [0,1]$ such that $\Pr[\Bzf{p}{G}]=\half$.  For example, we conjecture the following refinement of Theorem~\ref{thm:pathThreshold}.
\begin{conj}\label{conj:clique}
If $G$ is an $n$-vertex graph which contains a clique of size $k$, then
\[\thresh{G}=\Om(\sqrt{k/n}).\]
\end{conj}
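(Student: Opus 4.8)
The plan is to show that for a suitably small absolute constant $c>0$ and $p=c\sqrt{k/n}$ one has $\Pr[\Bzf{p}{G}]<\half$; since this probability is increasing in $p$, this gives $\thresh{G}\ge c\sqrt{k/n}$, as desired. This would generalize Theorem~\ref{thm:pathThreshold}, which is exactly the case $k=2$ (a clique of size two is an edge), and the bound is best possible: a clique $K_k$ with a pendant path of length roughly $n/k$ attached to each of its $k$ vertices has threshold $\Theta(\sqrt{k/n})$, since each pendant path feeds its anchor with probability about $(n/k)p^2$ and, because the clique itself cannot be forced from within, one needs all but one of the $k$ anchors to turn blue. I would first dispose of the small cases (where Theorem~\ref{thm:pathThreshold} already suffices) and reduce to $G$ connected with $k\ge 3$, so that the clique $Q$ lies in the $2$-core.

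The cornerstone is a structural observation about how a clique gets colored. Writing $Q$ for the $k$-clique, I claim that in any execution of the forcing process \emph{at most one} vertex of $Q$ is forced by another vertex of $Q$. Indeed, if $q\in Q$ is forced by a neighbor $w\in Q$, then at that instant every neighbor of $w$ other than $q$ is blue; since $Q\sm\{w,q\}\sub N(w)$, all of $Q\sm\{q\}$ is already blue, so $q$ is the last vertex of $Q$ to turn blue, and only one vertex can be last. Consequently, setting $a=|B_p(G)\cap Q|$, at least $k-a-1$ vertices of $Q$ are forced by vertices outside $Q$. The event $a\ge k-1$ has probability at most $(k+1)p^{k-1}$, which is negligible for $p=c\sqrt{k/n}$, so I may assume $a\le k-2$ and hence that at least $k-1-a$ clique vertices are colored by external forces. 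Because each vertex performs at most one force, these external forcers are distinct vertices outside $Q$.

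The second step would convert each external force into an independent probabilistic cost. Tracing the maximal forcing chain that enters $Q$ at a given externally-forced vertex back to its start $x_0\in B_p(G)$, the first force $x_0\to x_1$ requires every neighbor of $x_0$ except $x_1$ to be blue; thus either $x_0$ is a degree-one vertex in $B_p(G)$ (cost $p$) or $x_0$ together with one of its neighbors forms a blue adjacent pair (cost $p^2$). If one can extract $k-1$ \emph{vertex-disjoint} such ``sources'', then by independence $\Pr[\Bzf{p}{G}]$ is bounded by roughly $\frac{1}{(k-1)!}\big(2|E|p^2+L\,p\big)^{k-1}$, where $L$ counts degree-one vertices; the factor $1/(k-1)!$ records that the disjoint feeding structures have lengths summing to at most $n$, so their contributions multiply rather than add. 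To force the relevant edge count to be $O(n)$ I would pass to the $2$-core and cap the lengths of pendant paths exactly as in the proof of Theorem~\ref{thm:pathThreshold}; in this sparse regime, with $p^2=c^2k/n$ and $|E|=O(n)$, the expression becomes $(O(c^2))^{k-1}<\half$ for small $c$ and all $k\ge 2$.

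The hard part—and the reason this remains a conjecture—is the passage from the structural lemma to a clean, independent cost. Two difficulties stand out. First, the witnesses are not obviously disjoint or genuinely based on $B_p(G)$: distinct externally-forced clique vertices may lie on a common chain, and the off-chain neighbor producing a ``blue pair'' may itself be \emph{forced} rather than in $B_p(G)$, so one cannot naively charge a factor $p^2$; resolving this seems to require arguing through the reversal set to produce $k-1$ disjoint $B_p$-based certificates of size two. Second, for dense graphs the union bound over edge-sources explodes, whereas such graphs should instead be controlled through their many high-degree vertices via a sharpening of Lemma~\ref{lem:degreeLower} that accounts for more than the first force. Unifying the sparse regime, where the matching/factorial saving succeeds, with a separate high-degree argument for the dense regime is the principal obstacle I would need to overcome.
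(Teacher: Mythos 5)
This statement is Conjecture~\ref{conj:clique}: the paper does not prove it, and offers only the heuristic lower-bound construction (a $k$-clique with a pendant path of length roughly $n/k$ hung on each clique vertex), which you reproduce correctly, so there is no proof of the paper's to compare yours against; your attempt must stand on its own, and — as you candidly say yourself — it does not close the conjecture. What you do have right is worth recording: the structural claim that at most one vertex of the clique $Q$ is forced from \emph{inside} $Q$ is correct, and your ``last vertex of $Q$'' argument is exactly the idea behind the Butler--Young bound $Z(G)\ge k$~\cite{BY13}; consequently, writing $a=|B_p(G)\cap Q|$, at least $k-a-1$ clique vertices are forced by distinct outside vertices (each vertex forces at most once), the event $a\ge k-1$ indeed has probability at most $(k+1)p^{k-1}$, and the maximal forcing chains entering $Q$ at these vertices are vertex-disjoint. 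Your arithmetic is also consistent: \emph{if} one could extract $k-1$ disjoint, independent $B_p$-based certificates of cost $O(p^2)$ each in a graph with $O(n)$ edges, the factorial saving would give a bound like $(O(c^2))^{k-1}<\half$ at $p=c\sqrt{k/n}$.

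The genuine gaps are the two you name, and they are real, not technicalities. First, the per-chain cost $p^2$ is unjustified: the source $x_0$ of a maximal forcing chain does lie in $B_p(G)$, but the neighbors of $x_0$ that must be blue when it first fires may themselves have been \emph{forced} by other chains rather than drawn into $B_p(G)$, so distinct chains do not yield independent blue-pair events; the paper's Lemma~\ref{lem:degreeLower} extracts exactly one such certificate (from the chronologically first force in the whole process), and extracting $k-1$ disjoint certificates is precisely the open content of the conjecture, not a step one can wave through via the reversal set without a new idea. Second, your dense-regime fix does not work as stated: passing to the $2$-core and capping pendant-path lengths, as in the proof of Theorem~\ref{thm:pathThreshold}, controls pendant structure but does nothing to bound $|E|$ for a dense $G$, so the union bound $2|E|p^2$ explodes. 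Note that the paper's proof of the $k=2$ case avoids edge-counting entirely — it reduces to a minimum-degree-$2$ graph and invokes Theorem~\ref{thm:degree} — and no analogue of the form ``a graph containing $K_k$ needs roughly $k-1$ disjoint seeds'' is currently available to play that role for $k\ge 3$. So your write-up should be read as a plausible program whose first (structural) step is sound, but with the conjecture's difficulty located exactly where you located it.
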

This conjecture can be viewed as a probabilistic analog to the classical result that $Z(G)\ge k$ if $G$ has a clique of size $k$, which was proved by Butler and Young~\cite{BY13}.  The motivation for the bound $\Om(\sqrt{k/n})$ comes from considering a graph $G$ which consists of a clique on $k$ vertices, with each of these vertices attached to a path of length roughly $n/k$.  For this graph, a given vertex of the clique will be forced by the path it is connected to with probability roughly $1-e^{-p^2n/k}$, so if $p$ is much smaller than $\sqrt{k/n}$, then almost none of the clique vertices in $G$ will be colored blue.  Thus $\thresh{G}=\Om(\sqrt{k/n})$ in this case\footnote{In fact, a sharper analysis shows that $\thresh{G}=\Om(\sqrt{k \log(k)/n})$ for $k$ not too large in terms of $n$.  We suspect that Conjecture~\ref{conj:clique} can be strengthened to include this $\log(k)$ term, but for ease of presentation we have written the conjecture as is.}.

In Section~\ref{Sec:examples}, we determined the order of magnitude of $\thresh{G}$ for many natural families of graphs.  One case where we do not know how to do this is for the $n$-dimensional hypercube $Q_n$.
\begin{prob}\label{prob:cube}
Does there exist a constant $c$ such that $\thresh{Q_n}\sim c$?  If so, what is this constant?
\end{prob}
Because $Z(Q_n)=2^{n-1}$, we must have $c\ge .5$ if it exists, but beyond this we know nothing about $c$.  The empirical plot in Figure~\ref{fig:EmpiricalGraph}
of the probability that $B_p(Q_8)$ is zero forcing
suggests that $c$ might be at least .58.   Another family of graphs which we do not understand are grid graphs.

\begin{prob}
Determine the order of magnitude of $\thresh{P_m\square P_n}$, where $P_m\square P_n$ denotes the $m\times n$ grid graph.
\end{prob}
Assuming $2\le m\le n$, we can apply Theorem~\ref{thm:degreeLower} with $d=4$ and $N\approx n^{1/3}$ to show $\thresh{P_m\square P_n}=\Om(\min\{n^{-1/3},(mn)^{-1/4}\})$.  The best general upper bound we have is $\thresh{P_m\square P_n}=O(n^{-1/2m})$, since at this point it is fairly likely that $B_p(P_m\square P_n)$ contains two consecutive $P_m$ paths, which forces the entire graph.  For small $m$ we suspect that our upper bound is closer to the truth than our lower bound, but for large $m$ the situation is unclear.

Lastly, one could consider randomized versions of variants of the classical zero forcing number.  For example, under skew zero forcing (which was originally introduced in \cite{IMA10}), one can easily generalize Theorem~\ref{thm:degree} to give an upper bound of roughly $\del np^{\del-1}$, generalizing the classical result $Z_-(G)\ge \del-1$ if $G$ has minimum degree $\delta$.  It would also be interesting to consider probabilistic zero forcing with a random set of vertices initially colored blue.

\section*{Acknowledgements}
The authors would like to express their sincere gratitude to the organizers of the Mathematics Research Community on Finding Needles in Haystacks: Approaches to Inverse Problems Using Combinatorics and Linear Algebra, and the AMS for funding the program through NSF grant 1916439.

\bibliographystyle{alpha}
\bibliography{bib}

\end{document}